\renewcommand\footnotemark{}
\begin{document}

\title{Ramanujan's approximation to the exponential function and generalizations}

\author{
  Cormac ~O'Sullivan\footnote{
  {\it 2010 Mathematics Subject Classification:} 33B10, 30E15
\newline \indent \ \ \
{\em Key words and phrases.} Exponential function, gamma function, saddle-point method, exponential integral.
  \newline \indent \ \ \
Support for this project was provided by a PSC-CUNY Award, jointly funded by The Professional Staff Congress and The City
\newline \indent \ \ \
University of New York.}
  }

\date{}

\maketitle

\def\s#1#2{\langle \,#1 , #2 \,\rangle}

\def\F{{\frak F}}
\def\C{{\mathbb C}}
\def\R{{\mathbb R}}
\def\Z{{\mathbb Z}}
\def\Q{{\mathbb Q}}
\def\N{{\mathbb N}}
\def\G{{\Gamma}}
\def\GH{{\G \backslash \H}}
\def\g{{\gamma}}
\def\L{{\Lambda}}
\def\ee{{\varepsilon}}
\def\K{{\mathcal K}}
\def\Re{\mathrm{Re}}
\def\Im{\mathrm{Im}}
\def\PSL{\mathrm{PSL}}
\def\SL{\mathrm{SL}}
\def\Vol{\operatorname{Vol}}
\def\lqs{\leqslant}
\def\gqs{\geqslant}
\def\sgn{\operatorname{sgn}}
\def\res{\operatornamewithlimits{Res}}
\def\li{\operatorname{Li_2}}
\def\lip{\operatorname{Li}'_2}
\def\pl{\operatorname{Li}}

\def\ei{\mathrm{Ei}}

\def\clp{\operatorname{Cl}'_2}
\def\clpp{\operatorname{Cl}''_2}
\def\farey{\mathscr F}

\def\dm{{\mathcal A}}
\def\ov{{\overline{p}}}
\def\ja{{K}}

\def\nb{{\mathcal B}}
\def\cc{{\mathcal C}}
\def\nd{{\mathcal D}}

\newcommand{\stira}[2]{{\genfrac{[}{]}{0pt}{}{#1}{#2}}}
\newcommand{\stirb}[2]{{\genfrac{\{}{\}}{0pt}{}{#1}{#2}}}
\newcommand{\eu}[2]{{\left\langle\!\! \genfrac{\langle}{\rangle}{0pt}{}{#1}{#2}\!\!\right\rangle}}
\newcommand{\eud}[2]{{\left\langle\! \genfrac{\langle}{\rangle}{0pt}{}{#1}{#2}\!\right\rangle}}
\newcommand{\norm}[1]{\left\lVert #1 \right\rVert}

\newcommand{\e}{\eqref}
\newcommand{\bo}[1]{O\left( #1 \right)}


\newtheorem{theorem}{Theorem}[section]
\newtheorem{lemma}[theorem]{Lemma}
\newtheorem{prop}[theorem]{Proposition}
\newtheorem{conj}[theorem]{Conjecture}
\newtheorem{cor}[theorem]{Corollary}
\newtheorem{assume}[theorem]{Assumptions}
\newtheorem{adef}[theorem]{Definition}


\newcounter{counrem}
\newtheorem{remark}[counrem]{Remark}

\renewcommand{\labelenumi}{(\roman{enumi})}
\newcommand{\spr}[2]{\sideset{}{_{#2}^{-1}}{\textstyle \prod}({#1})}
\newcommand{\spn}[2]{\sideset{}{_{#2}}{\textstyle \prod}({#1})}

\numberwithin{equation}{section}

\let\originalleft\left
\let\originalright\right
\renewcommand{\left}{\mathopen{}\mathclose\bgroup\originalleft}
\renewcommand{\right}{\aftergroup\egroup\originalright}

\bibliographystyle{alpha}

\begin{abstract}
Ramanujan's approximation to the exponential function is reexamined with the help of Perron's saddle-point method. This allows for a wide generalization that includes the results of Buckholtz, and where all the  asymptotic expansion coefficients may be given in closed form. Ramanujan's approximation to the exponential integral is treated similarly.
\end{abstract}

\section{Introduction}
\subsection{Ramanujan's approximation to $e^n$}
The largest terms in the Taylor series development of $e^n$, when $n$ is  a positive integer, are $n^j/j!$ for $j=n-1$ and $j=n$.
So it is natural to compare $e^n/2$  with the sum of the first $n$ terms of this series. Ramanujan did this in Entry 48 of Chapter 12 in his second notebook,  writing
\begin{equation}\label{rb}
  1+\frac{n}{1!}+\frac{n^2}{2!}+ \cdots +\frac{n^{n-1}}{(n-1)!} +  \frac{n^n}{n!}\theta_n = \frac{e^n}2,
\end{equation}
and computing an asymptotic expansion which is equivalent to
\begin{equation}\label{rb2}
   \theta_n  = \frac{1}3+ \frac{4}{135 n}-\frac{8}{2835 n^2}-\frac{16}{8505 n^3}+\frac{8992}{12629925 n^4}+O\left( \frac{1}{n^5}\right),
\end{equation}
as $n \to \infty$.
Label the coefficient of $n^{-r}$ in the above  expansion as $\rho_r$. The difficulty of computing  $\rho_r$ in general was resolved by Marsaglia in \cite{Mar86} with a recursive procedure. In this paper, all expansion coefficients are given in closed form.
For example, one of our formulas for $\rho_r$  
is
\begin{equation*}
  \rho_r   = -  \sum_{k=0}^{2r+1}  \frac{(2r+2k)!!}{(-1)^k k!} \dm_{2r+1,k}\left( \frac{1}{3!}, \frac{1}{4!}, \frac{1}{5!}, \dots \right),
\end{equation*}
where the De Moivre polynomials $\dm_{n,k}$ are described in Definition \ref{dbf}. An elementary formula  for the quantity $\dm_{m,k}\left(\frac{1}{3!}, \frac{1}{4!}, \frac{1}{5!}, \dots \right)$ is  shown in \e{mvw2}. The usual double factorial notation we are using  has
\begin{equation} \label{doub}
  n!! := \begin{cases}
  n(n-2) \cdots 5 \cdot 3 \cdot 1 & \text{ if $n$ is odd};\\
  n(n-2) \cdots 6 \cdot 4 \cdot 2 & \text{ if $n$ is even},
  \end{cases}
\end{equation}
for $n\gqs 1$, with $0!!=(-1)!!=1$.


Inspired by claims of Ramanujan,  Szeg\"o in 1928 and Watson in 1929 bounded $\theta_n$ from above and below.  Flajolet and coauthors in 1995 \cite[Sect. 1]{Fl95} established a finer estimate and this result was elegantly reproved and extended by Volkmer  \cite{Vo08}, employing the Lambert $W$ function. See the discussion of  much work related to Entry 48 in \cite[pp. 181--184]{Ber89}.

Equation \e{rb} may be generalized to summing the first $n+v$ terms of the series:
\begin{equation}\label{rb3}
  \sum_{j=0}^{n+v-1}\frac{n^j}{j!}+   \frac{n^{n+v}}{(n+v)!}\theta_n(v) = \frac{e^n}2.
\end{equation}
Ramanujan developed the asymptotics of a related integral, (see \e{vv} with $w=1$), as described in  \cite[p. 193]{Ber89}, and his result is equivalent to
\begin{multline}\label{hrb}
   \theta_n(v)  = \frac{1}3-v+ \left(\frac{4}{135}-\frac{v^2(v+1)}{3}\right)\frac 1{n}\\
   -
   \left(\frac{8}{2835}+\frac{v(9v^4-15v^2-2v+4)}{135}\right)\frac{1}{n^2}+O\left( \frac{1}{n^3}\right),
\end{multline}
for $v$ a fixed integer as $n \to \infty$.
Our version of \e{hrb} is given in Theorem \ref{ext}.

\subsection{Further asymptotics}


We may also  include another natural parameter $w$.
Let $n$ and $v$ be integers with $n \gqs 1$ and $n+v \gqs 0$. For  nonzero $w\in \C$, define $S_n(w;v)$ with
\begin{equation}\label{ss}
  e^{n w}  =\sum_{j=0}^{n+v-1} \frac{(n w)^j}{j!}
   + \frac{(n w)^{n+v}}{(n+v)!} S_n(w;v)
\end{equation}
and define the complimentary $T_n(w;v)$ with
\begin{equation}\label{tt}
  e^{n w} = \frac{(n w)^{n+v}}{(n+v)!} T_n(w;v)
   + \sum_{j=n+v}^{\infty} \frac{(n w)^j}{j!}.
\end{equation}
For $w=0$ we may  set $S_n(0;v)$ to be $0$, leaving $T_n(0;v)$ undefined. It is clear from \e{tt} that $T_n(w;v)$ can  be given as a finite sum:
\begin{equation}\label{tt2}
   \frac{(n w)^{n+v}}{(n+v)!} T_n(w;v)
   = \sum_{j=0}^{n+v-1} \frac{(n w)^j}{j!}.
\end{equation}
Also there is the relation
\begin{equation}\label{tt3}
  e^{n w} = \frac{(n w)^{n+v}}{(n+v)!}\left(S_n(w;v)+ T_n(w;v)\right).
\end{equation}
Our earlier $\theta_n(v)$ function from \e{rb3} occurs in the $w=1$ case:
\begin{align}\label{ori}
  \theta_n(v) & = \frac{S_n(1;v)}2 - \frac{T_n(1;v)}2
  \\
  & = S_n(1;v) -\frac{(n+v)!}{2 n^{n+v}} e^n =  \frac{(n+v)!}{2 n^{n+v}} e^n -T_n(1;v). \label{ori2}
\end{align}

The behavior of $S_n(w;v)$ and $T_n(w;v)$ as  $n \to \infty$ are our main results in Theorems \ref{msth} and \ref{mtth}, extending the $v=0$ case considered by Buckholtz in \cite{Bu63}. These asymptotics depend on which of certain regions $w$ lies in; see Figure \ref{fig}. The region $\mathcal X$ is given by $\{w\in \C\,:\, |w e^{1-w}|>1\}$. Also $\{w\in \C\,:\, |w e^{1-w}|<1\}$ has the disjoint parts $\mathcal Y$ with $\Re(w)<1$ and $\mathcal Z$ with $\Re(w)>1$. The boundary curves $\mathcal S$ and $\mathcal T$ are where $|w e^{1-w}|=1$ with $\Re(w)<1$ and  $\Re(w)>1$, respectively. These curves have the parametrization $t\pm i \sqrt{e^{2t-2}-t^2}$ for $t\gqs -W(1/e) \approx -0.2785$, using the Lambert $W$ function. Among other things, Szeg\"o showed in \cite{Sz24} that $z$ is an accumulation point for the zeros of $T_n(w;1)$ as $n\to \infty$ if and only if $z \in \mathcal S \cup\{1\}$; this is the Szeg\"o curve.

%

\SpecialCoor
\psset{griddots=5,subgriddiv=0,gridlabels=0pt}
\psset{xunit=3cm, yunit=3cm}
\psset{linewidth=1pt}
\psset{dotsize=4pt 0,dotstyle=*}

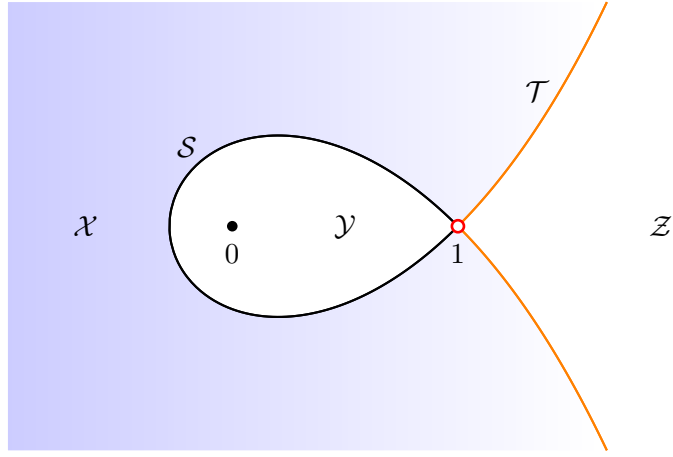
\begin{figure}[ht]
\begin{center}
\begin{pspicture}(-1,-1)(2,1) 

\pscustom[fillstyle=gradient,gradangle=270,linecolor=white,gradmidpoint=1,gradbegin=white,gradend=lightblue,gradlines=100]{%
  \pspolygon[linecolor=lightblue](-1,-1)(1.66,-1)(1.66,1)(-1,1)(-1,1)} 

\savedata{\mydatay}[
{{1., 0.}, {0.98, 0.0197342}, {0.96, 0.0389403}, {0.94,
  0.0576232}, {0.92, 0.0757878}, {0.9, 0.0934385}, {0.88,
  0.11058}, {0.86, 0.127215}, {0.84, 0.143349}, {0.82,
  0.158985}, {0.8, 0.174127}, {0.78, 0.188776}, {0.76,
  0.202937}, {0.74, 0.216612}, {0.72, 0.229802}, {0.7,
  0.242511}, {0.68, 0.25474}, {0.66, 0.26649}, {0.64,
  0.277763}, {0.62, 0.288559}, {0.6, 0.29888}, {0.58,
  0.308724}, {0.56, 0.318093}, {0.54, 0.326985}, {0.52, 0.3354}, {0.5,
   0.343336}, {0.48, 0.350792}, {0.46, 0.357765}, {0.44,
  0.364252}, {0.42, 0.370252}, {0.4, 0.375758}, {0.38,
  0.380768}, {0.36, 0.385276}, {0.34, 0.389275}, {0.32,
  0.39276}, {0.3, 0.395723}, {0.28, 0.398155}, {0.26,
  0.400047}, {0.24, 0.401387}, {0.22, 0.402164}, {0.2,
  0.402364}, {0.18, 0.40197}, {0.16, 0.400966}, {0.14,
  0.399332}, {0.12, 0.397045}, {0.1, 0.39408}, {0.08,
  0.390407}, {0.06, 0.385992}, {0.04, 0.380798}, {0.02,
  0.374778}, {0., 0.367879}, {-0.02, 0.36004}, {-0.04,
  0.351184}, {-0.06, 0.341221}, {-0.08, 0.330038}, {-0.1,
  0.317495}, {-0.12, 0.303411}, {-0.14, 0.287549}, {-0.16,
  0.26958}, {-0.18, 0.249039}, {-0.2, 0.225206}, {-0.2,
  0.225206}, {-0.21, 0.211711}, {-0.22, 0.196878}, {-0.23,
  0.180374}, {-0.24, 0.161689}, {-0.25, 0.139946}, {-0.26,
  0.1134},
  {-0.27, 0.0772425}, {-0.271, 0.0725798}, {-0.272, 0.0675838},
{-0.273, 0.0621741}, {-0.274, 0.0562315}, {-0.275, 0.0495648},
{-0.276, 0.0418289}, {-0.277, 0.032264},
{-0.278, -0.0181818}, {-0.277, -0.032264}, {-0.276, -0.0418289},
{-0.275, -0.0495648}, {-0.274, -0.0562315}, {-0.273, -0.0621741},
{-0.272, -0.0675838}, {-0.271, -0.0725798},
  {-0.27, -0.0772425}, {-0.26, -0.1134}, {-0.25,
-0.139946}, {-0.24, -0.161689}, {-0.23, -0.180374}, {-0.22,
-0.196878}, {-0.21, -0.211711}, {-0.2, -0.225206}, {-0.2, -0.225206},
{-0.18, -0.249039}, {-0.16, -0.26958}, {-0.14, -0.287549}, {-0.12,
-0.303411}, {-0.1, -0.317495}, {-0.08, -0.330038}, {-0.06,
-0.341221}, {-0.04, -0.351184}, {-0.02, -0.36004}, {0., -0.367879},
{0.02, -0.374778}, {0.04, -0.380798}, {0.06, -0.385992}, {0.08,
-0.390407}, {0.1, -0.39408}, {0.12, -0.397045}, {0.14, -0.399332},
{0.16, -0.400966}, {0.18, -0.40197}, {0.2, -0.402364}, {0.22,
-0.402164}, {0.24, -0.401387}, {0.26, -0.400047}, {0.28, -0.398155},
{0.3, -0.395723}, {0.32, -0.39276}, {0.34, -0.389275}, {0.36,
-0.385276}, {0.38, -0.380768}, {0.4, -0.375758}, {0.42, -0.370252},
{0.44, -0.364252}, {0.46, -0.357765}, {0.48, -0.350792}, {0.5,
-0.343336}, {0.52, -0.3354}, {0.54, -0.326985}, {0.56, -0.318093},
{0.58, -0.308724}, {0.6, -0.29888}, {0.62, -0.288559}, {0.64,
-0.277763}, {0.66, -0.26649}, {0.68, -0.25474}, {0.7, -0.242511},
{0.72, -0.229802}, {0.74, -0.216612}, {0.76, -0.202937}, {0.78,
-0.188776}, {0.8, -0.174127}, {0.82, -0.158985}, {0.84, -0.143349},
{0.86, -0.127215}, {0.88, -0.11058}, {0.9, -0.0934385}, {0.92,
-0.0757878}, {0.94, -0.0576232}, {0.96, -0.0389403}, {0.98,
-0.0197342}, {1., 0.}}
]
\dataplot[linecolor=black,linewidth=0.8pt,plotstyle=line,fillstyle=solid,fillcolor=white]{\mydatay}

\savedata{\mydata}[
{{1.66, 0.993892}, {1.64, 0.952386}, {1.62, 0.911709}, {1.6,
  0.871847}, {1.58, 0.832786}, {1.56, 0.794515}, {1.54,
  0.75702}, {1.52, 0.72029}, {1.5, 0.684311}, {1.48, 0.649074}, {1.46,
   0.614565}, {1.44, 0.580775}, {1.42, 0.547692}, {1.4,
  0.515307}, {1.38, 0.483608}, {1.36, 0.452585}, {1.34,
  0.422229}, {1.32, 0.392531}, {1.3, 0.363481}, {1.28,
  0.335071}, {1.26, 0.307291}, {1.24, 0.280133}, {1.22,
  0.253589}, {1.2, 0.22765}, {1.18, 0.20231}, {1.16, 0.177561}, {1.14,
   0.153394}, {1.12, 0.129804}, {1.1, 0.106784}, {1.08,
  0.084326}, {1.06, 0.0624248}, {1.04, 0.0410739}, {1.02,
  0.0202676}, {1., 0.}, {1.,
  0.}, {1.02, -0.0202676}, {1.04, -0.0410739}, {1.06, -0.0624248},
{1.08, -0.084326}, {1.1, -0.106784}, {1.12, -0.129804}, {1.14,
-0.153394}, {1.16, -0.177561}, {1.18, -0.20231}, {1.2, -0.22765},
{1.22, -0.253589}, {1.24, -0.280133}, {1.26, -0.307291}, {1.28,
-0.335071}, {1.3, -0.363481}, {1.32, -0.392531}, {1.34, -0.422229},
{1.36, -0.452585}, {1.38, -0.483608}, {1.4, -0.515307}, {1.42,
-0.547692}, {1.44, -0.580775}, {1.46, -0.614565}, {1.48, -0.649074},
{1.5, -0.684311}, {1.52, -0.72029}, {1.54, -0.75702}, {1.56,
-0.794515}, {1.58, -0.832786}, {1.6, -0.871847}, {1.62, -0.911709},
{1.64, -0.952386}, {1.66, -0.993892}}
]
\dataplot[linecolor=orange,linewidth=0.8pt,plotstyle=line,fillstyle=solid,fillcolor=white]{\mydata}

\dataplot[linecolor=black,linewidth=0.8pt,plotstyle=line]{\mydatay}
\dataplot[linecolor=orange,linewidth=0.8pt,plotstyle=line]{\mydata}

\pscircle*[linecolor=white,linewidth=1pt](1,0){0.08}
\pscircle[linecolor=red,linewidth=1pt](1,0){0.08}

\psdot(0,0)
\rput(1,-0.12){$1$}
\rput(0,-0.12){$0$}

\rput(-0.65,0.0){$\mathcal X$}
\rput(0.5,0.0){$\mathcal Y$}
\rput(1.9,0.0){$\mathcal Z$}

\rput(-0.2,0.35){$\mathcal S$}
\rput(1.35,0.6){$\mathcal T$}

\end{pspicture}
\caption{Partitioning the $w$-plane into $\mathcal X \cup \mathcal Y \cup \mathcal Z  \cup \mathcal S  \cup \mathcal T \cup \{ 1\}$  \label{fig}}
\end{center}
\end{figure}


Perron's saddle-point method is reviewed in section \ref{saddle}, and all the asymptotic expansions in this paper are proved as applications of this theory.
Our work also naturally includes the following version of Stirling's approximation.

\begin{prop} \label{xpg}
Let $v$ be any complex number. As real $n \to \infty$,
\begin{equation} \label{gnv}
  \G(n+v+1)= \sqrt{2\pi n}\frac{n^{n+v}}{e^n} \left(1+\frac{\g_1(v)}{n}+\frac{\g_2(v)}{n^2}+ \cdots + \frac{\g_{R-1}(v)}{n^{R-1}} +O\left(\frac{1}{n^{R}}\right)\right),
\end{equation}
for an implied constant depending only on $R$ and $v$, with
\begin{equation}\label{gnv2}
  \g_r(v) = \sum_{m=0}^{2r} (-1)^m \binom{v}{2r-m}\sum_{k=0}^{m}  \frac{(2r+2k-1)!!}{(-1)^k k!} \dm_{m,k}\left( \frac{1}{3}, \frac{1}{4}, \frac{1}{5}, \dots \right).
\end{equation}
\end{prop}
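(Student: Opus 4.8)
The plan is to start from Euler's integral $\Gamma(n+v+1)=\int_0^\infty t^{n+v}e^{-t}\,dt$ and rescale by $t=n\tau$ to put it into saddle-point form,
\[
  \Gamma(n+v+1) = n^{n+v+1}e^{-n}\int_0^\infty \tau^v e^{n\phi(\tau)}\,d\tau, \qquad \phi(\tau) := \log\tau - \tau + 1,
\]
where $\phi$ has a single interior maximum at $\tau=1$ with $\phi(1)=0$, $\phi'(1)=0$, $\phi''(1)=-1$, and $\phi(\tau)\to-\infty$ as $\tau\to0^+$ and as $\tau\to\infty$. First I would check that this integral meets the hypotheses of Perron's saddle-point method from Section \ref{saddle} (amplitude $\tau^v$ regular at the saddle, a nondegenerate quadratic maximum, integrable tails), so that the expansion \eqref{gnv} holds with error $O(n^{-R})$ for every $R$; the prefactor $n^{n+v+1}e^{-n}\sqrt{2\pi/n}=\sqrt{2\pi n}\,n^{n+v}/e^n$ then emerges automatically from the Gaussian leading term.

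The core of the argument is to compute the coefficients explicitly rather than recursively. Setting $\tau=1+y$, I would split the exponent as $n\phi(1+y)=-ny^2/2+n\bigl(\phi(1+y)+y^2/2\bigr)$ and record the key expansion
\[
  \phi(1+y) + \frac{y^2}{2} = \sum_{i\ge 1}\frac{(-1)^{i+1}}{i+2}\,y^{i+2} = y^2\sum_{i\ge 1}a_i y^i, \qquad a_i = \frac{(-1)^{i+1}}{i+2},
\]
whose normalized coefficients are, up to sign, the arguments $\tfrac13,\tfrac14,\tfrac15,\dots$ of the De Moivre polynomials. Then I would expand the two non-Gaussian factors,
\[
  (1+y)^v = \sum_{p\ge 0}\binom{v}{p}y^p,
\]
\[
  e^{n(\phi(1+y)+y^2/2)} = \sum_{k\ge 0}\frac{n^k}{k!}\,y^{2k}\Bigl(\sum_{i\ge 1}a_i y^i\Bigr)^k = \sum_{k\ge 0}\frac{n^k}{k!}\sum_{m\ge k}\dm_{m,k}(a)\,y^{2k+m},
\]
using Definition \ref{dbf} to recognize $(\sum_i a_i y^i)^k$ as the generating series of the De Moivre polynomials $\dm_{m,k}(a_1,a_2,\dots)$.

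Multiplying these series and integrating term by term against $e^{-ny^2/2}$ (extending the range to $\R$ at the cost of an exponentially small error, as licensed by the Perron machinery) leaves only even total powers $y^{p+2k+m}$, evaluated by the Gaussian moment $\int_\R y^{2q}e^{-ny^2/2}\,dy = \sqrt{2\pi/n}\,(2q-1)!!\,n^{-q}$. Collecting powers of $n$, the term $n^{-r}$ corresponds precisely to $p+m=2r$ with $q=k+r$, which produces the double factorial $(2r+2k-1)!!$ and reduces the triple sum to a double sum over $0\le m\le 2r$ and $0\le k\le m$. The final ingredient is a sign normalization: by the graded homogeneity of $\dm_{m,k}$ (replacing $y$ by $-y$), one has $\dm_{m,k}(a) = (-1)^{m+k}\dm_{m,k}(\tfrac13,\tfrac14,\dots)$, which converts $a_i=(-1)^{i+1}/(i+2)$ into the stated $\tfrac1{i+2}$ and supplies exactly the factors $(-1)^m$ and $1/(-1)^k$ appearing in \eqref{gnv2}.

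The step I expect to be the main obstacle is the bookkeeping that turns the formal triple product into the exact closed form \eqref{gnv2}: correctly aligning the index shift $p=2r-m$, matching the Gaussian double factorials $(2r+2k-1)!!$ to the right power of $n$, and tracking the three sources of signs (the alternating phase coefficients, the homogeneity reduction, and the binomial series) so that they collapse into the single clean factor. A secondary technical point is making the term-by-term integration rigorous, i.e.\ controlling the remainder uniformly; but this is precisely what the Perron theory of Section \ref{saddle} is designed to deliver, so it should reduce to verifying the regularity hypotheses at the saddle rather than estimating tails by hand.
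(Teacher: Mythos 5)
Your proposal is correct and follows essentially the same route as the paper: the rescaled Euler integral $n\int_0^\infty e^{n\,p(1;z)}z^v\,dz$ is exactly $\tfrac{e^n}{n^{n+v}}\G(n+v+1)=S_n(1;v)+T_n(1;v)$, and the paper obtains \eqref{gnv} by splitting this integral at the saddle $z=1$, applying Perron's expansion to each half (Lemma \ref{xm} and Proposition \ref{pil}) so the odd-$s$ terms cancel, and then converting $2^{r+k}\G(r+1/2)\binom{-r-1/2}{k}/\sqrt{\pi}$ into $(2r+2k-1)!!/((-1)^k k!)$ via \eqref{idp}. The only difference is cosmetic: you re-derive the coefficient formula of Proposition \ref{wojf} by hand with Gaussian moments instead of quoting it, and your sign bookkeeping ($\dm_{m,k}(a)=(-1)^{m+k}\dm_{m,k}(\tfrac13,\tfrac14,\dots)$ together with the double factorial from $q=k+r$) checks out against \eqref{gnv2}.
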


 The same techniques are used in the final section to examine Ramanujan's approximation to the exponential integral $\ei(n)$, which may be defined as a Cauchy principal value:
\begin{equation} \label{ein}
  \ei(n):= \lim_{\varepsilon \to 0^+} \left(\int_{-\infty}^{-\varepsilon} \frac{e^t}t \, dt + \int_{\varepsilon}^{n} \frac{e^t}t \, dt\right).
\end{equation}
We describe there an unexplained connection between these approximations to $\ei(n)$ and $e^{n}$.


\section{De Moivre polynomials and the saddle-point method} \label{saddle}

\begin{adef} \label{dbf}
 For integers $n$, $k$ with $k\gqs 0$, the {\em De Moivre polynomial} $\dm_{n,k}(a_1, a_2, \dots)$ is defined by
 \begin{equation} \label{bell}
    \left( a_1 x +a_2 x^2+ a_3 x^3+ \cdots \right)^k = \sum_{n\in \Z} \dm_{n,k}(a_1, a_2, a_3, \dots) x^n \qquad \quad (k \in \Z_{\gqs 0}).
\end{equation}
\end{adef}

Many properties of these polynomials are assembled in \cite{odm}.
Clearly $\dm_{n,k}(a_1, a_2, a_3, \dots)=0$ if $n<k$. If $n\gqs k$ then
 \begin{equation} \label{bell2}
  \dm_{n,k}(a_1, a_2, a_3, \dots) = \sum_{\substack{1j_1+2 j_2+ \dots +mj_m= n \\ j_1+ j_2+ \dots +j_m= k}}
 \binom{k}{j_1 , j_2 ,  \dots , j_m} a_1^{j_1} a_2^{j_2}  \cdots a_m^{j_m},
\end{equation}
where $m=n-k+1$ and the sum   is over all possible $j_1$, $j_2$,  \dots , $j_m \in \Z_{\gqs 0}$. It is a polynomial in $a_1, a_2, \dots, a_{m}$ of homogeneous degree $k$ with positive integer coefficients.
 As in \cite[Sect. 2]{odm},
 we have the  relations
\begin{align}
\dm_{n,k}(0, a_1, a_2, a_3, \dots) & =  \dm_{n-k,k}(a_1, a_2, a_3, \dots), \label{gsb}\\
  \dm_{n,k}(c a_1, c a_2, c a_3, \dots) & = c^k \dm_{n,k}(a_1, a_2, a_3, \dots), \label{mulk}\\
  \dm_{n,k}(c a_1, c^2 a_2, c^3 a_3, \dots) & = c^n \dm_{n,k}(a_1, a_2, a_3, \dots). \label{muln}
\end{align}

In the paper \cite{OSper} we give a detailed description of Perron's saddle-point method from \cite{Pe17}.
 The main result requires the following assumptions and definitions.

\begin{assume} \label{ma0}
Let $\nb$ be a neighborhood of $z_0 \in \C$ and $\cc$   a contour  of integration containing $z_0$. Assume that $\cc$  lies in a bounded region of $\C$ and is parameterized by
  a continuous function   $c:[0,1]\to \C$ that has a continuous derivative except at a finite number of points. Suppose $p(z)$ and $q(z)$ are holomorphic functions on a domain containing $\nb \cup \cc$.  We  assume $p(z)$ is not constant and  hence there must exist $\mu \in \Z_{\gqs 1}$ and $p_0 \in \C_{\neq 0}$ so that
\begin{equation}
    p(z)  =p(z_0)-p_0(z-z_0)^\mu(1-\phi(z)) \qquad  (z\in \nb) \label{f}
\end{equation}
with $\phi$  holomorphic on $\nb$ and $\phi(z_0)=0$.
We will need the {\em steepest-descent angles}
\begin{equation}\label{bisec}
    \theta_\ell := -\frac{\arg(p_0)}{\mu}+\frac{2\pi \ell}{\mu} \qquad (\ell \in \Z).
\end{equation}
Assume that  $\nb,$ $\cc,$ $p(z),$  $q(z)$ and $z_0$    are independent of  $n>0$. Finally, let $K_q$ be a bound for $|q(z)|$ on $\nb \cup \cc$.
\end{assume}

\begin{theorem}\label{il} {\rm (Perron's method for a holomorphic integrand with  contour starting at a maximum.)}
Suppose that  Assumptions \ref{ma0} hold, with $\cc$  a contour from $z_0$ to $z_1$ in $\C$ where $z_0 \neq z_1$. Suppose  that
\begin{equation}\label{c1}
    \Re(p(z))<\Re(p(z_0)) \quad \text{for all} \quad z \in \cc, \ z\neq z_0.
\end{equation}
We may choose $k \in \Z$ so that the initial part of $\cc$ lies in the sector of angular width $2\pi/\mu$ about $z_0$ with bisecting angle $\theta_k$.
Then for every $S \in \Z_{\gqs 0}$, we have
\begin{equation} \label{wim}
    \int_\cc e^{n \cdot p(z)}  q(z) \, dz = e^{n \cdot p(z_0)} \left(\sum_{s=0}^{S-1}  \G\left(\frac{s+1}{\mu}\right) \frac{\alpha_s \cdot e^{2\pi i k (s+1)/\mu}}{n^{(s+1)/\mu}} + O\left(\frac{K_q}{n^{(S+1)/\mu}} \right)  \right)
\end{equation}
as $n \to \infty$ where  the implied constant in \eqref{wim} is independent of $n$ and $q$. The numbers $\alpha_s$
depend only on $s$, $p$, $q$ and $z_0$.
\end{theorem}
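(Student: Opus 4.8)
The plan is to localize the integral to a neighborhood of $z_0$, flatten the exponent to a pure power by a holomorphic change of variable, deform onto the steepest-descent ray, and then expand the remaining amplitude and integrate term by term against the kernel $e^{-n p_0 t^\mu}$, reading off the $\alpha_s$ from the resulting $\Gamma$-integrals.

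First I would split $\cc = \cc' \cup \cc''$, with $\cc'$ the portion inside a small disk $\{|z-z_0|\lqs \delta\}\subset \nb$ and $\cc''$ the remainder. By compactness of $\cc''$ and the strict inequality \e{c1}, there is $\eta>0$ with $\Re(p(z))\lqs \Re(p(z_0))-\eta$ on $\cc''$, so the tail is bounded in modulus by $K_q\cdot(\text{length of }\cc'')\cdot e^{n\Re(p(z_0))}e^{-\eta n}$. Since $|e^{n p(z_0)}|=e^{n\Re(p(z_0))}$, this is smaller than every term in \e{wim}, and crucially the bound is linear in $K_q$, as the theorem demands. On $\nb$ I would then use \e{f} to set $t=(z-z_0)(1-\phi(z))^{1/\mu}$, choosing the branch with $(1-\phi(z_0))^{1/\mu}=1$; then $t$ is holomorphic with $t(z_0)=0$ and $t'(z_0)=1$, so $z=z(t)$ is holomorphic near $t=0$ and $p(z)=p(z_0)-p_0 t^\mu$ exactly. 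Writing $Q(t):=q(z(t))z'(t)$, the localized integral becomes $e^{n p(z_0)}\int_{\tilde{\cc}'} e^{-n p_0 t^\mu} Q(t)\,dt$, where the image contour $\tilde{\cc}'$ issues from $0$ into the descent valley singled out by $\theta_k$; this is the role of the choice of $k$. Because the integrand is holomorphic, Cauchy's theorem lets me deform $\tilde{\cc}'$ onto the ray $\arg t=\theta_k$, the connecting arc staying where $\Re(p_0 t^\mu)>0$ and contributing only an exponentially small error.

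On the ray I would Taylor expand $Q(t)=\sum_{s=0}^{S-1}c_s t^s + R_S(t)$, with $|R_S(t)|\lqs C|t|^S$ on the disk, where by Cauchy's estimates $C$ is linear in $\max|Q|\lqs K_q\max|z'(t)|$. Substituting $t=x e^{i\theta_k}$ and $y=n|p_0|x^\mu$, and using $\mu\theta_k\equiv -\arg(p_0)\ (\mathrm{mod}\ 2\pi)$, the model integral evaluates to $\int_{0}^{\infty e^{i\theta_k}} e^{-n p_0 t^\mu} t^s\,dt=\tfrac{1}{\mu}p_0^{-(s+1)/\mu}\,\G(\tfrac{s+1}{\mu})\,e^{2\pi i k(s+1)/\mu}\,n^{-(s+1)/\mu}$, the branch factor $e^{2\pi i k(s+1)/\mu}$ recording the sector; truncating each ray to finite length costs only exponentially small tails. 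Collecting terms yields exactly \e{wim}, with $\alpha_s = c_s\, p_0^{-(s+1)/\mu}/\mu$, manifestly depending only on $s$, $p$, $q$ and $z_0$.

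The main obstacle is the uniform control of the remainder. I must show $\int_{0}^{\infty} e^{-n|p_0|x^\mu}|R_S|\,|dt|\lqs C\int_0^\infty e^{-n|p_0|x^\mu}x^S\,dx = O\big(C\,n^{-(S+1)/\mu}\big)$ and verify that $C$, and hence the whole error, is linear in $K_q$ and independent of $n$; this is precisely the assertion that the implied constant in \e{wim} is independent of $n$ and $q$. The two remaining points requiring care are fixing the branch of $p_0^{-(s+1)/\mu}$ consistently with the definition \e{bisec} of $\theta_k$, and justifying that the deformation of $\tilde{\cc}'$ onto the ray $\theta_k$ is valid along its whole length, so that no competing descent direction is crossed; both follow from \e{c1} together with the choice of $k$ and the machinery of \cite{OSper}.
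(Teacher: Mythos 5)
The paper does not actually prove Theorem \ref{il}: it is imported verbatim as \cite[Thm.\ 1.2]{OSper} (going back to Perron \cite{Pe17}), so there is no in-paper argument to compare against. Your sketch is the standard steepest-descent proof of exactly this statement and is essentially sound: localize to a disk in $\nb$, use \e{f} to flatten the phase via $t=(z-z_0)(1-\phi(z))^{1/\mu}$ so that $p(z)=p(z_0)-p_0t^\mu$ exactly, rotate onto the ray $\arg t=\theta_k$, and integrate the Taylor expansion of $q(z(t))z'(t)$ against $e^{-np_0t^\mu}$; the branch bookkeeping $e^{i\theta_k(s+1)}=p_0^{-(s+1)/\mu}|p_0|^{(s+1)/\mu}e^{2\pi ik(s+1)/\mu}$ correctly produces the factor $e^{2\pi ik(s+1)/\mu}$ in \e{wim}, and your Cauchy-estimate argument does give an error linear in $K_q$ and uniform in $n$. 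Two points deserve tightening. First, the connecting arc must lie where $\Re(p_0t^\mu)$ is bounded \emph{below by a positive constant}, not merely positive; this holds because condition \e{c1} confines the local contour to the single valley $|\arg t-\theta_k|<\pi/(2\mu)$ (which is why the hypothesis on the sector of width $2\pi/\mu$ is needed), the first exit point $t^*$ already satisfies $\Re(p_0(t^*)^\mu)\gqs\eta$, and $\Re(p_0t^\mu)$ only increases as one moves along $|t|=|t^*|$ toward the ray $\theta_k$. Second, your decomposition $\cc=\cc'\cup\cc''$ should take $\cc'$ to be the segment up to the \emph{first exit} from the disk and $\cc''$ everything after (a compact set avoiding $z_0$, hence with $\Re(p)\lqs\Re(p(z_0))-\eta$ by \e{c1}); taking $\cc''$ to be literally ``the part outside the disk'' would leave later re-entries near $z_0$ unaccounted for. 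Finally, your $\alpha_s=c_sp_0^{-(s+1)/\mu}/\mu$, with $c_s$ the Taylor coefficient of $q(z(t))z'(t)$, looks different from \e{hjw} but necessarily agrees with it by uniqueness of asymptotic expansions; the theorem only asserts the stated dependence, so this is fine.
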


Theorem \ref{il} is \cite[Thm. 1.2]{OSper} and the next proposition  is \cite[Prop. 7.2]{OSper}.
Write the Taylor expansions of $p$ and $q$ at $z_0$ as
\begin{equation} \label{pqe}
    p(z)-p(z_0)=-\sum_{s=0}^\infty p_s (z-z_0)^{s+\mu}, \qquad q(z)=\sum_{s=0}^\infty q_s (z-z_0)^{s}.
\end{equation}

\begin{prop} \label{wojf}
The numbers needed in Theorem \ref{il} have the explicit formula
\begin{equation} \label{hjw}
    \alpha_s = \frac{1}{\mu} p_0^{-(s+a)/\mu} \sum_{m=0}^s q_{s-m}\sum_{j=0}^m  \binom{-(s+a)/\mu}{j}  \dm_{m,j}\left(\frac{p_1}{p_0},\frac{p_2}{p_0},\cdots\right),
\end{equation}
for $a=1$. (We will need an extension of this later, requiring more general $a$ values.)
\end{prop}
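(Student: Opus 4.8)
The plan is to run the reduction that underlies Perron's method explicitly and then extract the relevant Taylor coefficient by a residue computation that sidesteps any Lagrange inversion. The numbers $\alpha_s$ are produced inside the proof of Theorem \ref{il}; since the expansion \eqref{wim} determines them uniquely (the scales $n^{-(s+1)/\mu}$, weighted by the fixed nonzero factors $\Gamma((s+1)/\mu)$ and $e^{2\pi i k(s+1)/\mu}$, are asymptotically independent), it suffices to reproduce that expansion and match. Writing $\zeta = z-z_0$ and using \eqref{f} and \eqref{pqe}, set $g(\zeta) := 1+\sum_{k\ge 1}(p_k/p_0)\zeta^k$, so that $p(z_0)-p(z) = p_0\zeta^\mu g(\zeta)$. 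First I would introduce the change of variable $w = \zeta\, g(\zeta)^{1/\mu}$, holomorphic and invertible near $0$ with $w = \zeta\bigl(1+O(\zeta)\bigr)$, which satisfies $p(z_0)-p(z) = p_0 w^\mu$.

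On the initial piece of $\cc$ (the rest of the contour contributes only exponentially small terms, exactly as in the proof of Theorem \ref{il}) the integral becomes $e^{n p(z_0)}\int e^{-n p_0 w^\mu}\, q(z)\tfrac{dz}{dw}\,dw$. Expanding $q(z)\tfrac{dz}{dw} = \sum_{s\ge 0} A_s w^s$ and integrating along the steepest-descent ray $w = r e^{i\theta_k}$, the defining relation \eqref{bisec} gives $\mu\theta_k \equiv -\arg p_0 \pmod{2\pi}$, so $p_0 w^\mu = |p_0| r^\mu$ is real and positive; the substitution $u = n|p_0| r^\mu$ then yields $\int_0^\infty e^{-n|p_0|r^\mu} r^s\,dr = \tfrac{1}{\mu}\Gamma\!\left(\tfrac{s+1}{\mu}\right)(n|p_0|)^{-(s+1)/\mu}$. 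Collecting the phase $e^{i(s+1)\theta_k}$ and combining $|p_0|^{-(s+1)/\mu}e^{-i(s+1)\arg(p_0)/\mu} = p_0^{-(s+1)/\mu}$ reproduces \eqref{wim} term by term and identifies $\alpha_s = \tfrac{1}{\mu}\,p_0^{-(s+1)/\mu}A_s$.

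It then remains to compute $A_s = [w^s]\bigl(q(z)\tfrac{dz}{dw}\bigr)$, and the key step is to write this as a residue and revert to the variable $\zeta$. Since $\tfrac{dz}{dw}\,dw = d\zeta$ and $w^{s+1} = \zeta^{s+1}g(\zeta)^{(s+1)/\mu}$,
\[
A_s = \frac{1}{2\pi i}\oint \frac{q(z_0+\zeta)}{w^{s+1}}\,d\zeta = [\zeta^s]\Bigl(q(z_0+\zeta)\,g(\zeta)^{-(s+1)/\mu}\Bigr),
\]
which avoids inverting $w = \zeta g(\zeta)^{1/\mu}$ altogether. Now expand $g(\zeta)^{-(s+1)/\mu} = \bigl(1+\sum_{k\ge1}(p_k/p_0)\zeta^k\bigr)^{-(s+1)/\mu}$ by the binomial series and recognize, via the defining identity \eqref{bell}, that $\bigl(\sum_{k\ge1}(p_k/p_0)\zeta^k\bigr)^{j} = \sum_m \dm_{m,j}(p_1/p_0,p_2/p_0,\dots)\zeta^m$. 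Multiplying by $q(z_0+\zeta)=\sum_i q_i\zeta^i$ and reading off the coefficient of $\zeta^s$ (with $i = s-m$ and $\dm_{m,j}=0$ for $j>m$) gives exactly the double sum of \eqref{hjw} for $a=1$.

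I expect the only delicate points to be the choice of branch for $p_0^{-(s+1)/\mu}$ and the matching of the rotation factor $e^{2\pi i k(s+1)/\mu}$, together with the (routine, and in any case inherited from Theorem \ref{il}) justification of term-by-term integration; once the residue change of variable is in place, the coefficient extraction is purely formal. The stated generalization to other $a$ will follow from the identical argument, with $w^{s+1}$ replaced by $w^{s+a}$ in the residue denominator — precisely the power that appears when an extra factor $(z-z_0)^{a-1}$ weights the integrand.
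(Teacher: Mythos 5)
Your derivation is correct: the substitution $w=\zeta\,g(\zeta)^{1/\mu}$, the evaluation along the ray $\arg w=\theta_k$, and the residue identity $A_s=[\zeta^s]\bigl(q(z_0+\zeta)\,g(\zeta)^{-(s+1)/\mu}\bigr)$ followed by the binomial/De Moivre expansion reproduce \eqref{hjw} exactly, and the uniqueness of the coefficients in the scale $n^{-(s+1)/\mu}$ legitimizes the matching. Note, however, that the paper offers no proof to compare against --- Proposition \ref{wojf} is quoted verbatim as Proposition 7.2 of \cite{OSper} --- and your argument is essentially the standard derivation of Perron's coefficient formula given there (Perron's classical statement is precisely $\alpha_s=\tfrac1\mu\cdot\tfrac1{s!}\frac{d^s}{dz^s}\bigl[q(z)\bigl((z-z_0)^\mu/(p(z_0)-p(z))\bigr)^{(s+a)/\mu}\bigr]_{z=z_0}$, which is your residue in disguise); the only points you defer, the tail estimates and term-by-term integration justifying the expansion itself, are indeed the content of Theorem \ref{il} and need not be redone here.
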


\section{Initial results for $S_n(w;v)$ and $T_n(w;v)$}

Define
\begin{equation}\label{pz}
  p(z)=p(w;z) := w(1-z)+\log z.
\end{equation}
 For $w$, $v$  in $\C$ with $w \neq 0$ and positive real $n$, we will need the function
\begin{equation}\label{pr}
  \frac{1}{(n w)^{n+v}} = e^{-(n+v) \log(nw)} = e^{-(n+v) \log n} \cdot e^{-(n+v) \log w},
\end{equation}
where $\log w$ is evaluated using the principle branch of the logarithm with arguments in $(-\pi,\pi]$.

\begin{lemma} \label{snwx}
The following formulas can be used to extend the definitions \e{ss} and \e{tt} of $S_n(w;v)$ and $T_n(w;v)$  to  all $w$, $v$  in $\C$ and $n$ in $\R$ with $n>0$ and $\Re(n+v)>-1$:
\begin{align} \label{snw2}
    S_n(w;v) & =1+n w  \int_0^1 e^{n \cdot p(z)} z^v \, dz,\\
    T_n(w;v) & =\frac{e^{n w}}{(n w)^{n+v}} \G(n+v+1) - S_n(w;v) \qquad (w\neq 0) \label{snw3}.
\end{align}
As functions of $w$, $S_n(w;v)$ is entire and $T_n(w;v)$ is holomorphic outside $(-\infty,0]$.
\end{lemma}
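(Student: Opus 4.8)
The plan is to start from the integral in \e{snw2} and put it in a transparent form. Since $p(z)=w(1-z)+\log z$ gives $e^{n\cdot p(z)}=e^{nw}e^{-nwz}z^{n}$, the right-hand side of \e{snw2} becomes
\begin{equation*}
  S_n(w;v)=1+nw\,e^{nw}\int_0^1 e^{-nwz}z^{n+v}\,dz .
\end{equation*}
The integrand is continuous on $(0,1]$ and of size $z^{\Re(n+v)}$ near $0$, so the integral converges exactly when $\Re(n+v)>-1$; this is where the stated restriction on $n+v$ comes from. For every fixed $z\in(0,1]$ the integrand is an entire function of $w$, and on any compact set of $w$-values one has $\bigl|e^{-nwz}z^{n+v}\bigr|\lqs C\,z^{\Re(n+v)}$, which is integrable on $[0,1]$. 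Hence, by Morera's theorem together with Fubini (equivalently, by differentiation under the integral sign), $w\mapsto \int_0^1 e^{-nwz}z^{n+v}\,dz$ is entire, and therefore $S_n(\cdot\,;v)$ is entire. This settles the analyticity assertion for $S_n$.

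Next I would check that \e{snw2} reproduces the original definition \e{ss} on its original domain, i.e.\ for integer $n\gqs 1$ and integer $v$ with $n+v\gqs 0$. It is cleanest to verify this first for real $w>0$ and then extend. For $w>0$ the substitution $t=nwz$ identifies the integral with a lower incomplete gamma value,
\begin{equation*}
  \int_0^1 e^{-nwz}z^{n+v}\,dz=\frac{\gamma(n+v+1,nw)}{(nw)^{n+v+1}},
\end{equation*}
and the finite-sum formula $\gamma(n+v+1,nw)=(n+v)!\bigl(1-e^{-nw}\sum_{k=0}^{n+v}(nw)^k/k!\bigr)$, valid for the integer $n+v\gqs 0$, gives after simplification
\begin{equation*}
  S_n(w;v)=\frac{(n+v)!}{(nw)^{n+v}}\Bigl(e^{nw}-\sum_{j=0}^{n+v-1}\frac{(nw)^j}{j!}\Bigr),
\end{equation*}
where the $k=n+v$ term of the sum cancels the leading $1$. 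This is precisely \e{ss} solved for $S_n(w;v)$. A reader preferring to avoid the incomplete gamma function can instead integrate by parts once in $z$ and induct on $n+v$, reproducing the series term by term. Now both sides are entire in $w$ for integer $n+v\gqs 0$ — the apparent pole of $(nw)^{-(n+v)}$ at $w=0$ is cancelled by the zero of $e^{nw}-\sum_{j<n+v}(nw)^j/j!$ of the same order — so agreement on the ray $w>0$ forces agreement for all $w\in\C$. Thus \e{snw2} genuinely extends \e{ss}.

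Finally I would treat $T_n$. On the original domain relation \e{tt3} reads $e^{nw}=\frac{(nw)^{n+v}}{(n+v)!}\bigl(S_n(w;v)+T_n(w;v)\bigr)$, so solving for $T_n$ and writing $(n+v)!=\G(n+v+1)$ yields exactly \e{snw3}; this is the natural extension and agrees with the old value whenever $v$ is an integer. The restriction $\Re(n+v)>-1$ is exactly what keeps $\G(n+v+1)$ finite, since $\Re(n+v+1)>0$ lies to the right of all poles of the gamma function. For analyticity, the only non-entire ingredient of \e{snw3} is $(nw)^{-(n+v)}=e^{-(n+v)\log(nw)}$ with the principal branch of $\log$, which is holomorphic on $\C\setminus(-\infty,0]$; since $\G(n+v+1)$ is a constant and $S_n(\cdot\,;v)$ is entire, $T_n(\cdot\,;v)$ is holomorphic off $(-\infty,0]$, as claimed. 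The step I expect to require the most care is the matching computation: lining up the incomplete gamma (or the integration-by-parts induction) with the finite sum so that the indices and the cancellation of the leading $1$ come out exactly right, and confirming that the real-$w$ verification can be promoted to all $w$ by the entirety of both sides; the analyticity statements themselves are routine once the domination bound is in hand.
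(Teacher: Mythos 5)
Your argument is correct, but it runs in a genuinely different direction from the paper's. The paper works outward from the finite sum: it represents $1+T_n(w;v)$ as $\int_0^\infty e^{-t}(1+t/(nw))^{n+v}\,dt$ via the binomial theorem, rewrites both this and $\Gamma(n+v+1)$ as contour integrals of $e^{nw(1-z)}z^{n+v}$ over the rays $\nd$ and $1+\nd$ (from $0$ through $1/w$ to infinity), and then obtains \e{snw2} by Cauchy's theorem and a limiting argument that collapses the difference of the two ray integrals onto the segment $[0,1]$. You instead evaluate the $[0,1]$ integral head-on for real $w>0$ via the lower incomplete gamma function and its finite-sum closed form for integer parameters, match it against \e{ss} directly (the cancellation of the leading $1$ against the $k=n+v$ term checks out), and promote the identity to all $w\in\C$ by noting both sides are entire -- including the removable singularity of $(nw)^{-(n+v)}\sum_{j\gqs n+v}(nw)^j/j!$ at $w=0$. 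Your route is more elementary and self-contained, avoiding the contour deformation and the limiting argument entirely; what it gives up is the ray representation $\frac{e^{nw}}{(nw)^{n+v}}\G(n+v+1)=nw\int_{\nd}e^{nw(1-z)}z^{n+v}\,dz$, which the paper sets up here precisely so it can be reused (as \e{fol}) in the proof of Lemma \ref{xm}. The treatment of $T_n$ and the analyticity statements coincide with the paper's in substance. One cosmetic quibble: the restriction $\Re(n+v)>-1$ is forced by convergence of the integral at $z=0$, as you say first; the gamma factor $\G(n+v+1)$ would actually be finite for many $n+v$ with real part $\lqs -1$, so that secondary justification is not quite the right reason, though it does no harm.
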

\begin{proof}
We first assume that $n$ and $v$ are integers with $n\gqs 1$ and $n+v \gqs 0$. Then for nonzero $w\in \C$ the integral
\begin{equation}\label{vv}
  \int_0^\infty e^{-t}\left(1+\frac t{n w} \right)^{n+v} \, dt
\end{equation}
is absolutely convergent and by the binomial theorem it
equals
\begin{equation*}
  \sum_{j=0}^{n+v} \binom{n+v}{j} \int_0^\infty e^{-t} \left(\frac t{n w} \right)^j \, dt
  =\frac{(n+v)!}{(n w)^{n+v}} \sum_{j=0}^{n+v} \frac{(n w)^j}{j!}.
\end{equation*}
Hence \e{tt2} implies that \e{vv} equals $1+T_n(w;v)$. With a change of variables,
\begin{equation} \label{bri}
   1+T_n(w;v) = n w  \int_{\nd} e^{-n w z} (1+z)^{n+v} \, dz = n w  \int_{1+\nd} e^{n w(1-z)} z^{n+v} \, dz
\end{equation}
for $\nd = \nd_w$  the line from $0$ through $1/w$ to infinity.
Next,
\begin{align}
  \frac{e^{n w}}{(n w)^{n+v}}(n+v)!  & = \frac{e^{n w}}{(n w)^{n+v}} \int_0^\infty e^{-t} t^{n+v} \, dt \notag\\
  & = \frac{e^{n w}}{(n w)^{n+v}} n w\int_{\nd} e^{-nw z} (n w z)^{n+v} \, dz \notag \\
  & =  n w\int_{\nd} e^{nw (1-z)} z^{n+v} \, dz. \label{bri2}
\end{align}
From \e{tt3}, \e{bri} and \e{bri2}, writing $\G(n+v+1)$ for $(n+v)!$,
\begin{align}
   S_n(w;v) & =\frac{e^{n w}}{(n w)^{n+v}} \G(n+v+1) - T_n(w;v) \label{bri3}\\
   & = 1+n w\int_{\nd} e^{nw (1-z)} z^{n+v} \, dz
   -n w  \int_{1+\nd} e^{n w(1-z)} z^{n+v} \, dz. \notag
\end{align}
Integrating this  holomorphic integrand around a closed contour gives zero, and so a limiting argument implies  \e{snw2} for nonzero $w\in \C$ and integers $n$, $v$  with $n\gqs 1$ and $n+v \gqs 0$. Now the integral in \e{snw2} converges to an entire function of $w$ for all $n$, $v$ with $\Re(n+v)>-1$, extending the definition of $S_n(w;v)$. Also \e{snw3}
follows from \e{bri3}, allowing the definition of $T_n(w;v)$ to be extended.
\end{proof}

\begin{lemma} \label{xm}
Let $w$, $v$ be in $\C$. As real $n \to \infty$,
\begin{alignat}{2} \label{lmf}
    S_n(w;v) & = 1+n w  \int_{1/2}^1 e^{n \cdot p(z)} z^v \, dz +O\left(2^{-n/20}\right) \qquad & &(\Re(w)\lqs 1),\\
    T_n(w;v) & = -1+n w  \int_{1}^{3/2} e^{n \cdot p(z)} z^v \, dz  +O\left(e^{-n/30}\right) \qquad & &(\Re(w)\gqs 1), \label{lmf2}
\end{alignat}
for implied constants depending only on $w$ and $v$.
\end{lemma}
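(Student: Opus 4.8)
The plan is to localize each integral near the endpoint $z=1$, where $p(1)=0$ is the maximum of $\Re(p)$ along the relevant contour, and to discard the remainder as exponentially small. For $S_n$ I start from the representation \e{snw2}, so that the claimed \e{lmf} amounts to bounding $nw\int_0^{1/2} e^{n\cdot p(z)} z^v\,dz$. Since $z$ is real and positive here, I write $g(z):=\Re(p(z))=\Re(w)(1-z)+\log z$ and note $\left|e^{n\cdot p(z)} z^v\right| = e^{n g(z)} z^{\Re(v)}$. Because $g'(z)=-\Re(w)+1/z\gqs 0$ on $(0,1]$ when $\Re(w)\lqs 1$, the function $g$ is increasing there, so on $[0,1/2]$ it is maximized at $z=1/2$ with $g(1/2)=\Re(w)/2-\log 2\lqs 1/2-\log 2<0$. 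To handle the possible singularity of $z^{\Re(v)}$ at the origin I would peel off a fixed number $k$ of factors, writing $e^{ng(z)}=e^{(n-k)g(z)}e^{kg(z)}$ with $k$ large enough that $\int_0^{1/2}e^{kg(z)}z^{\Re(v)}\,dz<\infty$; bounding $e^{(n-k)g(z)}\lqs e^{(n-k)g(1/2)}$ then gives a bound of the shape $n|w|\cdot C\, e^{(n-k)(1/2-\log 2)}$. Since $\log 2-1/2\approx 0.193$ comfortably exceeds $(\log 2)/20\approx 0.035$, this is $\bo{2^{-n/20}}$, which proves \e{lmf}.

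For $T_n$ the natural starting point is the contour representation implicit in the proof of Lemma \ref{snwx}, namely $1+T_n(w;v)=nw\int_{1+\nd}e^{n\cdot p(z)}z^v\,dz$, where $1+\nd$ is the ray from $z=1$ in the direction $1/w$; this holds for integer $n,v$ by \e{bri} and extends to all complex $v$ with $\Re(n+v)>-1$ by analytic continuation, both sides being holomorphic in $v$. The first step is to deform this ray to the real ray $[1,\infty)$. Because $\Re(w)\gqs 1>0$, every point $z=1+t/w$ of the ray satisfies $\Re(z)=1+t\Re(w)/|w|^2\gqs 1$, so the entire sector spanned between $1+\nd$ and $[1,\infty)$ lies in $\{\Re(z)\gqs 1\}$, away from the branch cut $(-\infty,0]$ of $\log z$ and of $z^v$, where the integrand is holomorphic. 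On the connecting arc at distance $\rho$ from $z=1$ one has $\Re(p(z))=\Re(w)-\Re(wz)+\log|z|\to-\infty$ as $\rho\to\infty$, so that arc contributes nothing in the limit, and Cauchy's theorem yields $1+T_n(w;v)=nw\int_1^\infty e^{n\cdot p(x)}x^v\,dx$.

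It then remains to show $nw\int_{3/2}^\infty e^{n\cdot p(x)}x^v\,dx=\bo{e^{-n/30}}$, which is entirely analogous to the $S_n$ estimate. On $[1,\infty)$ we have $\Re(p(x))=\Re(w)(1-x)+\log x\lqs (1-x)+\log x=:h(x)$, since $\Re(w)\gqs 1$ and $1-x\lqs 0$; as $h'(x)=-1+1/x<0$ for $x>1$, the function $h$ is decreasing and $h(x)\lqs h(3/2)=\log(3/2)-1/2\approx -0.094$ on $[3/2,\infty)$. Peeling off enough factors of $e^{h}$ to secure convergence at infinity and bounding the rest by $e^{(n-k)h(3/2)}$ gives a bound $n|w|\cdot C\, e^{(n-k)(\log(3/2)-1/2)}$, which is $\bo{e^{-n/30}}$ because $\tfrac12-\log(3/2)\approx 0.094$ exceeds $1/30\approx 0.033$. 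Subtracting $1$ from both sides then yields \e{lmf2}.

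I expect the only genuine obstacle to be the contour deformation for $T_n$: one must verify carefully that the sector between the two rays avoids the branch cut (this is exactly where $\Re(w)\gqs 1$ enters, keeping the ray in the right half-plane) and that the far arc is negligible, and one must justify the passage from the integer case of the representation to general complex $v$. The two tail bounds themselves are routine one-dimensional Laplace-type estimates, and the generous numerical margins built into $2^{-n/20}$ and $e^{-n/30}$ leave ample room.
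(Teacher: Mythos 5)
Your overall strategy is the paper's: localize each integral at $z=1$, discard the rest as exponentially small, and for $T_n$ justify passage to the real ray by a contour rotation with vanishing arcs. The two tail estimates are correct --- your monotonicity-plus-peeling argument replaces the paper's pointwise inequalities $e^{1-z}z\lqs z^{1/10}$ on $[0,1/2]$ and $e^{1-z}z\lqs e^{-z/20}$ on $[3/2,\infty)$, and your numerical margins ($\log 2-\tfrac12$ versus $\tfrac1{20}\log 2$, and $\tfrac12-\log\tfrac32$ versus $\tfrac1{30}$) check out. Rotating $1+\nd$ about $z=1$ rather than $\nd$ about the origin is even a mild simplification, since it keeps the contour away from the origin where $z^{n+v}$ could misbehave for small $\Re(n+v)$.

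The one step that does not hold up as written is the justification of the starting identity $1+T_n(w;v)=nw\int_{1+\nd}e^{n\cdot p(z)}z^v\,dz$ for general parameters. You cannot obtain it ``by analytic continuation, both sides being holomorphic in $v$'': the identity is known via \e{bri} only at integer values of $v$ for integer $n$, and a set of integers has no accumulation point, so holomorphy in $v$ gives nothing; moreover non-integer $n$ is not addressed at all. For general $n>0$ with $\Re(n+v)>-1$ the function $T_n(w;v)$ is \emph{defined} by \e{snw3}, so what actually must be proved is the contour identity $\frac{e^{nw}}{(nw)^{n+v}}\G(n+v+1)=nw\int_{[0,1]\cup(1+\nd)}e^{n\cdot p(z)}z^v\,dz$, i.e.\ the deformation of the ray $\nd$ in \e{fol} onto $[0,1]\cup(1+\nd)$ --- equivalently the paper's \e{ct2} combined with your subsequent rotation. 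The required arc estimates are of exactly the kind you already carry out, so the repair is routine, but as stated this step is an assertion rather than a proof.
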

\begin{proof}
With Lemma \ref{snwx}, to demonstrate \e{lmf} we must bound
\begin{equation} \label{dan}
  n w  \int_0^{1/2} e^{n \cdot p(z)} z^v \, dz \ll  n   \int_0^{1/2}\left( e^{\Re(w)(1-z)}z \right)^n z^{\Re(v)} \, dz.
\end{equation}
Use the inequality $e^{1-z}z \lqs z^{1/10}$ for $0\lqs z \lqs 1/2$ to continue, with
\begin{equation*}
   n   \int_0^{1/2}z^{n/10+ \Re(v)} \, dz \ll n   \int_0^{1/2}z^{n/20} \, dz,
\end{equation*}
for $n$ large enough that $n/20+ \Re(v)\gqs 0$, and we obtain \e{lmf}.

For \e{lmf2} we claim first that when $\Re(w)\gqs 1$ and $n$ is large enough,
\begin{equation}\label{ct}
   T_n(w;v)  = -1+n w  \int_{1}^{\infty} e^{n \cdot p(z)} z^v \, dz,
\end{equation}
and by \e{snw3} this is true if we can establish
\begin{equation}\label{ct2}
   \frac{e^{n w}}{(n w)^{n+v}} \G(n+v+1) = n w  \int_0^\infty e^{n \cdot p(z)} z^v \, dz.
\end{equation}
But we have
\begin{equation} \label{fol}
  \frac{e^{n w}}{(n w)^{n+v}} \G(n+v+1) = \frac{e^{n w}}{(n w)^{n+v}} \int_0^\infty e^{-t} t^{n+v}\, dt
  =
  n w  \int_{\nd} e^{nw(1-z)} z^{n+v} \, dz,
\end{equation}
with $\nd = \nd_w$  the line from $0$ through $1/w$ to infinity. Let $\beta :=\arg(w)$. Then $|\beta|<\pi/2$ and we also have $\arg(1/w)=-\beta$. In the usual way,  the line of integration $\nd$ may be moved to the positive real axis after checking some growth estimates as follows. Let $\nd_1$ be the path from $R_1>0$ to $R_2>R_1$. Then $\nd_2$ is the arc of radius $R_2$ from $R_2$ to $R_2 e^{-i \beta}$. Next $\nd_3$ is the line from $R_2 e^{-i \beta}$ to $R_1 e^{-i \beta}$, coinciding with part of $\nd$, and lastly $\nd_4$ is the arc of radius $R_1$ from $R_1 e^{-i \beta}$ to $R_1$. Integrating $e^{nw(1-z)} z^{n+v}$ around the closed path made up of $\nd_1$, $\nd_2$, $\nd_3$ and $\nd_4$ gives zero since it is holomorphic on the interior. Writing $w=|w|e^{i \beta}$ and $z=R e^{i \theta}$, we may bound the integrals over the arcs $\nd_2$ and $\nd_4$ with
\begin{align*}
  \left| e^{nw(1-z)} z^{n+v}\right| & \lqs e^{n|w|(\cos(\beta)-R \cos(\beta+\theta))}R^{n+\Re(v)} \\
  & \lqs e^{n|w|\cos(\beta)(1-R)}R^{n+\Re(v)}
\end{align*}
since $\theta$ is between $-\beta$ and $0$. Therefore, for $R=R_2$,
\begin{equation} \label{ct3}
  n w  \int_{\nd_2} e^{nw(1-z)} z^{n+v} \, dz \ll n e^{n|w|\cos(\beta)(1-R_2)}R_2^{n+\Re(v)+1}.
\end{equation}
As $\cos(\beta)>0$ we see that \e{ct3} goes to zero as $R_2 \to \infty$. Also the integral over $\nd_4$ goes to zero as $R=R_1 \to 0$ in \e{ct3} when $n$ is large enough that $n+\Re(v)+1>0$. Hence \e{fol} implies \e{ct2} as we wanted.

Lastly we argue as in \e{dan} to bound the part of the integral \e{ct} with $z\gqs 3/2$. Use that $e^{1-z}z \lqs e^{-z/20}$ when $z\gqs 3/2$ to show
\begin{equation*}
  n w  \int_{3/2}^\infty e^{n \cdot p(z)} z^v \, dz \ll  n   \int_{3/2}^\infty  e^{-n z/20} z^{\Re(v)} \, dz.
\end{equation*}
For $n$ large enough that $e^{-n z/40} z^{\Re(v)} \lqs 1$ when $3/2 \lqs z$, we may replace the last integrand by $ e^{-n z/40}$ and complete the proof of \e{lmf2}.
\end{proof}

\section{The case $w=1$} \label{w=1}

In this section we set $w=1$ so that $p(z)=p(1;z)=1-z+\log z$.

\begin{prop} \label{pil}
Let $S$ be a fixed positive integer. As $n \to \infty$
\begin{align}
  \int_{1/2}^1 e^{n \cdot p(z)} z^v \, dz & = \sum_{s=0}^{S-1} \G\left( \frac{s+1}2\right) \frac{(-1)^s\beta_s(v)}{n^{(s+1)/2}} + O\left(\frac 1{n^{(S+1)/2}} \right),\label{in}\\
  \int_1^{3/2} e^{n \cdot p(z)} z^v \, dz & = \sum_{s=0}^{S-1} \G\left( \frac{s+1}2\right) \frac{\beta_s(v)}{n^{(s+1)/2}} + O\left(\frac 1{n^{(S+1)/2}} \right), \label{in2}
\end{align}
for
\begin{equation}
  \beta_s(v) =\sum_{m=0}^s (-1)^m \binom{v}{s-m} \sum_{k=0}^m 2^{(s-1)/2+k} \binom{-(s+1)/2}{k}
  \dm_{m,k}\left( \frac{1}{3}, \frac{1}{4}, \frac{1}{5}, \dots \right). \label{in3}
\end{equation}
\end{prop}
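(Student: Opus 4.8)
The plan is to recognize both integrals as instances of Perron's method (Theorem \ref{il}) applied to $p(z)=1-z+\log z$ from \eqref{pz} with $q(z)=z^v$, and then to make the abstract coefficients $\alpha_s$ of Proposition \ref{wojf} explicit and check that they equal $\beta_s(v)$. The contour for the first integral runs from the maximum $z_0=1$ out to $1/2$, and for the second from $z_0=1$ out to $3/2$, so in both cases the contour starts at the maximum, exactly as Theorem \ref{il} requires.

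First I would record the local data at $z_0=1$. Since $p'(z)=-1+1/z$ vanishes at $z_0=1$ with $p(1)=0$, writing $z=1+u$ gives
$$p(1+u)=-u+\log(1+u)=\sum_{j\gqs 2}\frac{(-1)^{j-1}}{j}u^j.$$
Comparing with $p(z)-p(z_0)=-\sum_{s\gqs 0}p_s(z-z_0)^{s+\mu}$ from \eqref{pqe} forces $\mu=2$ and
$$p_s=\frac{(-1)^s}{s+2},\qquad\text{so}\qquad p_0=\tfrac12,\quad \frac{p_s}{p_0}=\frac{2(-1)^s}{s+2}.$$
Because $q(z)=z^v=\sum_{s\gqs 0}\binom{v}{s}(z-z_0)^s$ is holomorphic on $\C\setminus(-\infty,0]$ and $p$ is strictly increasing on $(0,1)$ and strictly decreasing on $(1,\infty)$, the hypotheses of Assumptions \ref{ma0} and the inequality \eqref{c1} hold along each real segment; here $q_s=\binom{v}{s}$, the bound $K_q$ is a constant, and the error $O(K_q/n^{(S+1)/\mu})$ becomes the $O(n^{-(S+1)/2})$ in \eqref{in}--\eqref{in2}.

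Next I would pin down the steepest-descent directions and the resulting roots of unity. As $p_0=1/2>0$, the angles \eqref{bisec} are $\theta_\ell=\pi\ell$. The contour to $3/2$ leaves $z_0=1$ in direction $0=\theta_0$, so $k=0$ and the phase $e^{2\pi i k(s+1)/\mu}=1$; Theorem \ref{il} then gives \eqref{in2} directly, with $\beta_s(v)$ replaced by $\alpha_s$. The contour to $1/2$ leaves $z_0=1$ in direction $\pi=\theta_1$, so $k=1$ and the phase is $e^{\pi i(s+1)}=(-1)^{s+1}$; combining this with the orientation flip $\int_{1/2}^1=-\int_1^{1/2}$ produces the sign $(-1)^s$ of \eqref{in}. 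In particular both expansions carry the same coefficient $\alpha_s$, so it remains only to show $\alpha_s=\beta_s(v)$.

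Finally I would evaluate \eqref{hjw} with $a=1$, $\mu=2$, $p_0=1/2$: the prefactor is $\tfrac1\mu p_0^{-(s+1)/2}=2^{(s-1)/2}$ and $q_{s-m}=\binom{v}{s-m}$. The one substantive point is that the De Moivre polynomial in \eqref{hjw} has arguments $p_i/p_0=2(-1)^i/(i+2)$, not the $1/(i+2)$ of \eqref{in3}. I would bridge the two using the homogeneity relations: factoring $2(-1)^i/(i+2)=2\cdot(-1)^i\cdot\frac{1}{i+2}$, relation \eqref{muln} with $c=-1$ contributes a factor $(-1)^m$ and relation \eqref{mulk} with $c=2$ contributes $2^k$, so
$$\dm_{m,k}\left(\frac{p_1}{p_0},\frac{p_2}{p_0},\dots\right)=2^k(-1)^m\,\dm_{m,k}\left(\tfrac13,\tfrac14,\tfrac15,\dots\right).$$
Substituting this and collecting $2^{(s-1)/2}\cdot 2^k=2^{(s-1)/2+k}$ turns \eqref{hjw} into precisely \eqref{in3}. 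I expect the main obstacle to be bookkeeping rather than conceptual: identifying the two steepest-descent directions with the correct $k$, tracking the induced sign through the orientation flip so that $(-1)^s$ and $+1$ appear as in \eqref{in}--\eqref{in2}, and applying the two homogeneity relations in the right order to normalize the De Moivre arguments.
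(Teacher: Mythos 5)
Your proposal is correct and follows essentially the same route as the paper: apply Theorem \ref{il} at the maximum $z_0=1$ with $\mu=2$, $p_0=1/2$, $q_s=\binom{v}{s}$, taking $k=1$ (bisecting angle $\theta_1=\pi$) for the leftward contour and $k=0$ for the rightward one, then compute $\alpha_s$ from Proposition \ref{wojf} and normalize the De Moivre arguments via \eqref{mulk} and \eqref{muln}. Your sign bookkeeping (the orientation flip combining with $(-1)^{s+1}$ to give $(-1)^s$) and the identification $\alpha_s=\beta_s(v)$ both check out.
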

\begin{proof}
We see that $p(z)$ is holomorphic away from $z=0$. Let $z_0=1$ with $p(1)=0$ and we have the Taylor expansion about $1$:
\begin{equation*}
  p(z)-p(1)=-\sum_{j=2}^\infty \frac{(-1)^{j}}{j} (z-1)^{j}.
\end{equation*}
According to our setup with Assumptions \ref{ma0} and \e{pqe},  $q(z)=z^v$ and
\begin{equation}\label{mu2}
p_s=(-1)^s/(s+2),  \quad p_0=1/2, \quad \mu=2, \quad \theta_\ell=\pi \ell,  \quad q_s=\binom{v}{s}.
\end{equation}
 For the integral in \e{in} we may apply Theorem \ref{il} with $\cc$ the interval from $1$ to $1/2$ since $\Re(p(z))$ has its maximum at $z=1$. The initial part of $\cc$ lies in the sector with bisecting angle $\theta_1=\pi$, since the contour is moving left, and we need $k=1$ in \e{wim}.
This means that
\begin{equation*}
     \int_{1}^{1/2} e^{n \cdot p(z)} \, dz = \sum_{s=0}^{S-1}  \G\left(\frac{s+1}{2}\right) \frac{\alpha_s \cdot (-1)^{(s+1)}}{n^{(s+1)/2}} + O\left(\frac{1}{n^{(S+1)/2}} \right)
\end{equation*}
and $\beta_s(v)=\alpha_s$ is computed with Proposition \ref{wojf} to get \e{in3}.

The integral \e{in2} is handled the same way, the only difference being that the contour is moving right into the sector with  bisecting angle $\theta_0=0$,  and so  $k=0$ is needed in \e{wim}.
\end{proof}

With \e{ori}, \e{ori2} and Lemma \ref{snwx}, we may
extend the definition of $\theta_n(v)$ to all $n>0$ and $v\in \C$ with $\Re(n+v)>-1$ using
\begin{align}\label{lie}
   \theta_n(v) & =S_n(1;v) - \frac{e^n}{2 n^{n+v}}\G(n+v+1) \\
   & = \frac{S_n(1;v)}2 -\frac{T_n(1;v)}2. \label{bx}
\end{align}
Define
\begin{equation}\label{rhrv}
  \rho_r(v) := \delta_{r,0}-  \sum_{m=0}^{2r+1} (-1)^m\binom{v}{2r+1-m}\sum_{k=0}^{m}  \frac{(2r+2k)!!}{(-1)^k k!} \dm_{m,k}\left( \frac{1}{3}, \frac{1}{4}, \frac{1}{5}, \dots \right).
\end{equation}

\begin{theorem} \label{ext}
With the above definition, as  $n \to \infty$,
\begin{equation}\label{abb}
  \theta_n(v) =\rho_0(v)+\frac{\rho_1(v)}{n}+\frac{\rho_2(v)}{n^2}+ \cdots + \frac{\rho_{R-1}(v)}{n^{R-1}}+
  O\left( \frac{1}{n^R}\right)
\end{equation}
for an implied constant depending only on $R\in \Z_{\gqs 1}$ and $v\in \C$.
\end{theorem}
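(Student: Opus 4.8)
The plan is to substitute the integral asymptotics of Proposition~\ref{pil} into the representation \e{bx}. I would begin from $\theta_n(v)=\tfrac12 S_n(1;v)-\tfrac12 T_n(1;v)$ and apply Lemma~\ref{xm} with $w=1$; since $\Re(1)=1$ satisfies both $\Re(w)\lqs 1$ and $\Re(w)\gqs 1$, the two formulas \e{lmf} and \e{lmf2} are simultaneously available. The additive constants $+1$ and $-1$ combine to a single $1$, the two integrals are retained with a common factor $n/2$, and the exponentially small remainders collapse into one, producing
\begin{equation*}
  \theta_n(v)=1+\frac{n}{2}\left(\int_{1/2}^1 e^{n\cdot p(z)}z^v\,dz-\int_1^{3/2}e^{n\cdot p(z)}z^v\,dz\right)+O\left(e^{-n/30}\right).
\end{equation*}

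I would then insert the two expansions from Proposition~\ref{pil}. Their $s$th terms differ only through the factors $(-1)^s$ and $1$, so in the difference the even-$s$ contributions cancel while the odd-$s$ ones are doubled, the relevant factor being $(-1)^s-1=-2$. Putting $s=2r+1$ gives $(s+1)/2=r+1$ and $\G\!\left(\tfrac{s+1}{2}\right)=r!$, so after the multiplication by $n/2$ the $s=2r+1$ term contributes exactly $-r!\,\beta_{2r+1}(v)\,n^{-r}$. Hence the coefficient of $n^{-r}$ in $\theta_n(v)$ equals $\delta_{r,0}-r!\,\beta_{2r+1}(v)$, and it remains only to identify this with $\rho_r(v)$.

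That identification is the single computational step, and it is brief. Specializing \e{in3} to $s=2r+1$ and comparing with \e{rhrv}, the $v$-sums $\sum_m(-1)^m\binom{v}{2r+1-m}$ are already identical, so everything reduces to the term-by-term identity
\begin{equation*}
  r!\,2^{r+k}\binom{-(r+1)}{k}=\frac{(2r+2k)!!}{(-1)^k k!}.
\end{equation*}
This I would verify by writing $\binom{-(r+1)}{k}=(-1)^k\binom{r+k}{k}=(-1)^k(r+k)!/(r!\,k!)$ and using $(2r+2k)!!=2^{r+k}(r+k)!$ from \e{doub}: both sides equal $2^{r+k}(-1)^k(r+k)!/k!$. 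This confirms $\rho_r(v)=\delta_{r,0}-r!\,\beta_{2r+1}(v)$.

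Finally I would pin down the error. Choosing $S=2R+1$ in Proposition~\ref{pil} makes each integral remainder $O\!\left(n^{-(S+1)/2}\right)=O\!\left(n^{-(R+1)}\right)$, which the factor $n/2$ turns into $O(n^{-R})$; this dominates the exponentially small $O(e^{-n/30})$ and, since the odd values $s=1,3,\dots,2R-1$ correspond to $r=0,1,\dots,R-1$, it produces \e{abb} exactly through the $n^{-(R-1)}$ term. No step is a genuine obstacle, all the analysis having been done in Lemma~\ref{xm} and Proposition~\ref{pil}; the only care required is the bookkeeping of the reindexing $s\mapsto 2r+1$ and the double-factorial identity displayed above.
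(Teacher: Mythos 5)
Your proposal is correct and follows essentially the same route as the paper: combine \e{bx} with Lemma \ref{xm} and Proposition \ref{pil}, observe that the factor $(-1)^s-1$ kills the even-$s$ terms and doubles the odd ones, reindex $s=2r+1$, and identify $\rho_r(v)=\delta_{r,0}-r!\,\beta_{2r+1}(v)$ via the double-factorial identity. The only difference is that you spell out the final coefficient identification and the choice $S=2R+1$, which the paper leaves implicit in ``and the theorem follows''; both details check out.
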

\begin{proof}
Together, Lemma \ref{xm},  Proposition \ref{pil}  and \e{bx} imply that
\begin{equation*}
  \theta_n(v) = 1+\frac n2 \left(\sum_{s=0}^{S-1} \G\left( \frac{s+1}2\right) \frac{\beta_s(v)}{n^{(s+1)/2}}((-1)^s -1) + O\left(\frac 1{n^{(S+1)/2}} \right) \right).
\end{equation*}
Summing over $s=2r+1$ odd then gives
\begin{equation*}
  \theta_n(v) = 1-\sum_{r=0}^{R-1} \G(r+1) \frac{\beta_{2r+1}(v)}{n^{r}} + O\left(\frac 1{n^{R}}\right)
\end{equation*}
and the theorem follows.
\end{proof}


\begin{proof}[Proof of Proposition \ref{xpg}]
By \e{snw3},
\begin{equation}
  \G(n+v+1)  = \frac{n^{n+v}}{e^n} \left( S_n(1;v) + T_n(1;v)\right). \label{bx2}
\end{equation}
Hence, for $\beta_s(v)$ in \e{in3},
\begin{equation*}
  \G(n+v+1) =n \frac{n^{n+v}}{e^n} \left(\sum_{s=0}^{S-1} \G\left( \frac{s+1}2\right) \frac{\beta_s(v)}{n^{(s+1)/2}}((-1)^s +1) + O\left(\frac 1{n^{(S+1)/2}} \right) \right).
\end{equation*}
Summing over $s=2r$ even then gives
\begin{equation*}
  \G(n+v+1) = 2\sqrt{n} \frac{n^{n+v}}{e^n} \left(\sum_{r=0}^{R-1} \G(r+1/2) \frac{\beta_{2r}(v)}{n^{r}} + O\left(\frac 1{n^{R}}\right)\right).
\end{equation*}
The proof is completed using
\begin{equation*}
  \G(r+1/2) = \frac{\sqrt{\pi} (2r)!}{2^{2r} r!}, \qquad \binom{-r-1/2}{k}
  =\frac{(-1)^k}{2^{2k} k!} \frac{(2r+2k)! r!}{(2r)! (r+k)!},
\end{equation*}
which, with \e{doub}, imply the identity
\begin{equation}\label{idp}
  2^{r+k}\frac{\G(r+1/2)}{\sqrt{\pi}}\binom{-r-1/2}{k}  =  \frac{(2r+2k-1)!! }{(-1)^k k!}.
\end{equation}
\end{proof}

\begin{cor}[Stirling's approximation] \label{stirgam}
As real $n \to \infty$,
\begin{equation} \label{gmx}
  \G(n+1)= \sqrt{2\pi n}\left(\frac{n}{e}\right)^n \left(1+\frac{\g_1}{n}+\frac{\g_2}{n^2}+ \cdots + \frac{\g_{R-1}}{n^{R-1}} +O\left(\frac{1}{n^{R}}\right)\right),
\end{equation}
for an implied constant depending only on $R$, with
\begin{equation}\label{gmy}
  \g_r = \sum_{k=0}^{2r}  \frac{(2r+2k-1)!!}{(-1)^k k!} \dm_{2r,k}\left( \frac{1}{3}, \frac{1}{4}, \frac{1}{5}, \dots \right).
\end{equation}
\end{cor}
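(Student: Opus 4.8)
The plan is to deduce this corollary directly from Proposition \ref{xpg}, which has already been established, by specializing to $v=0$. First I would substitute $v=0$ into the expansion \e{gnv}. Because $n^{n+v}=n^{n}$ in that case, the prefactor $\sqrt{2\pi n}\,n^{n+v}/e^{n}$ reduces to $\sqrt{2\pi n}\,(n/e)^{n}$, and the implied constant, which depended on $R$ and $v$, now depends on $R$ alone; both features match the statement \e{gmx}. It then remains only to verify that the coefficients $\g_r(0)$ given by \e{gnv2} collapse to the formula \e{gmy} for $\g_r$.

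Next I would examine the outer sum over $m$ in \e{gnv2} at $v=0$. The factor $\binom{v}{2r-m}$ becomes $\binom{0}{2r-m}$, which equals $0$ for every $m$ with $0\lqs m<2r$ and equals $1$ when $m=2r$. Hence only the top term $m=2r$ survives, and there $(-1)^{m}=(-1)^{2r}=1$, leaving
\begin{equation*}
  \g_r(0)=\sum_{k=0}^{2r}\frac{(2r+2k-1)!!}{(-1)^{k}k!}\dm_{2r,k}\left(\frac13,\frac14,\frac15,\dots\right),
\end{equation*}
which is exactly the claimed formula \e{gmy}.

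I do not expect any genuine obstacle here: the corollary is a pure specialization, so the only point requiring care is the elementary fact that $\binom{0}{j}=0$ for $j\gqs 1$ while $\binom{0}{0}=1$. All of the analytic content — the reduction of $S_n(1;0)$ and $T_n(1;0)$ to saddle-point integrals via Lemmas \ref{snwx} and \ref{xm}, their evaluation through Proposition \ref{wojf}, and the double-factorial identity \e{idp} — has already been carried out in proving Proposition \ref{xpg}, and setting $v=0$ leaves each of those steps intact. I would therefore simply record the collapse of the $m$-sum and note that it produces the stated coefficients.
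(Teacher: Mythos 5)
Your proposal is correct and matches the paper exactly: the paper simply remarks that Corollary \ref{stirgam} is the $v=0$ case of Proposition \ref{xpg}, and your verification that $\binom{0}{2r-m}$ kills every term except $m=2r$ in \e{gnv2} is precisely the specialization intended. Nothing further is needed.
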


Corollary \ref{stirgam} is the $v=0$ case of Proposition \ref{xpg}, and an equivalent form of \e{gmy} is due to Perron \cite[p. 210]{Pe17}. Also \e{gmy} is equivalent to \cite[Thm. 2.7]{BM11} using the generating function $z+\log(1-z)$. Brassesco and M\'endez give another formulation in \cite[Thm. 2.1]{BM11}, based on $e^z-1-z$, and the formula corresponding to \e{gmy} is the same, except that $3$, $4$, $5, \dots$ are replaced by factorials:
\begin{equation}\label{gmy2}
  \g_r = \sum_{k=0}^{2r}  \frac{(2r+2k-1)!!}{(-1)^k k!} \dm_{2r,k}\left( \frac{1}{3!}, \frac{1}{4!}, \frac{1}{5!}, \dots \right).
\end{equation}
This is true even though $\dm_{2r,k}\left( \frac{1}{3}, \frac{1}{4}, \frac{1}{5}, \dots \right)$ and $\dm_{2r,k}\left( \frac{1}{3!}, \frac{1}{4!}, \frac{1}{5!}, \dots \right)$ are  usually not equal. We will see \e{gmy2} as the  $v=0$ case of Proposition \ref{v0}.

In this section, with $w=1$, the function $p(1;z)$ has a simple saddle-point at $z=1$, i.e. $\frac d{dz} p(1;z)|_{z=1}=0$. This means $\mu=2$ in \e{mu2}. In the next section, where $w\neq 1$, the function $p(w;z)$ will no longer have a saddle-point at $z=1$, only a maximum. This makes $\mu=1$ and changes the shape of the asymptotics as we will see. Soni and Soni  show  in \cite{So92} how to give an asymptotic expansion for $S_n(w;0)$ that is uniform for $w$ in a neighborhood of $1$.

\section{The cases $w \neq 1$}

Define the rational functions
\begin{equation}\label{wb}
  U_r(w;v) := \delta_{r,0}-\sum_{m=0}^r (-1)^{m}\binom{v}{r-m}\sum_{k=0}^{m} \frac{ w}{(w-1)^{r+k+1}}
   \frac{(r+k)!}{ k!}  \dm_{m,k}\left( \frac{1}{2}, \frac{1}{3}, \frac{1}{4},  \dots \right),
\end{equation}
so that, for example,
\begin{gather*}
  U_0(w;v)=\frac{1}{1-w}, \qquad U_1(w;v)=-\frac{w}{(1-w)^3} - v\frac{w}{(1-w)^2},\\
  U_2(w;v)=\frac{w(2w+1)}{(1-w)^5} + v \frac{w(w+2)}{(1-w)^4}+ v^2 \frac{w}{(1-w)^3}.
\end{gather*}

\begin{prop} \label{abc}
 Let $w$ and $v$ be fixed complex numbers with $\Re(w)\lqs 1$ and $w \neq 1$.  Then as real $n \to \infty$,
\begin{equation} \label{wa}
  S_n(w;v) = U_0(w;v) + \frac{U_1(w;v)}{n} + \frac{U_2(w;v)}{n^2} + \cdots + \frac{U_{R-1}(w;v)}{n^{R-1}}  +O\left(\frac{1}{n^{R}}\right),
\end{equation}
for an implied constant depending only on $w$, $v$ and $R$.
\end{prop}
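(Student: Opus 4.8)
The plan is to feed the integral representation of $S_n(w;v)$ from Lemma \ref{xm} into Perron's method, Theorem \ref{il}, exploiting the fact that moving away from $w=1$ turns the order-two saddle of Section \ref{w=1} into an order-one endpoint maximum. Concretely, for $\Re(w)\lqs 1$ we start from
\begin{equation*}
  S_n(w;v) = 1 + n w \int_{1/2}^1 e^{n \cdot p(z)} z^v \, dz + O\left(2^{-n/20}\right),
\end{equation*}
and apply Theorem \ref{il} to the contour $\cc$ running from the maximum $z_0=1$ to $z_1=1/2$. Since $p'(1)=1-w\neq 0$, the expansion \e{f} has $\mu=1$ and $p_0=w-1$; with $\mu=1$ the phase $e^{2\pi i k(s+1)/\mu}$ collapses to $1$ for every admissible $k$ (the sector has full angular width $2\pi$), and $p(1)=0$.

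First I would verify the descent hypothesis \e{c1}. On the real interval this reduces to checking that $f(z):=\Re(w)(1-z)+\log z$ attains its maximum on $[1/2,1]$ only at $z=1$; since $f'(z)=1/z-\Re(w)>0$ for $z<1$ when $\Re(w)\lqs 1$, the function $f$ is strictly increasing, so $f(z)<f(1)=0$ on $[1/2,1)$. This step is routine and is where $\Re(w)\lqs 1$ is used, whereas the condition $w\neq 1$ is exactly what guarantees $\mu=1$. Next I would compute the $\alpha_s$ from Proposition \ref{wojf} with $a=1$, $\mu=1$, using $q_s=\binom{v}{s}$ and, from $\log z=\sum_{j\gqs 1}(-1)^{j+1}(z-1)^j/j$, the values $p_0=w-1$ and $p_s=(-1)^{s+1}/(s+1)$ for $s\gqs 1$.

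The crucial manipulation is to rewrite the De Moivre polynomials $\dm_{m,j}(p_1/p_0,p_2/p_0,\dots)$ in terms of $\dm_{m,j}(\tfrac12,\tfrac13,\tfrac14,\dots)$. Because $p_s/p_0=(-1)^s(1-w)^{-1}(s+1)^{-1}$, I would apply the homogeneity relation \e{mulk} with $c=(1-w)^{-1}$ and then the relation \e{muln} with $c=-1$ to obtain
\begin{equation*}
  \dm_{m,j}\left(\frac{p_1}{p_0}, \frac{p_2}{p_0}, \dots\right) = \frac{(-1)^m}{(1-w)^j}\, \dm_{m,j}\left(\frac12, \frac13, \frac14, \dots\right).
\end{equation*}
I expect this combined rescaling to be the main obstacle, since the two relations must be applied in the correct order and all powers of $(1-w)$ versus $(w-1)$, together with the signs $(-1)^m$ and $(-1)^s$, must be tracked exactly.

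Finally, reversing the orientation gives $S_n(w;v)=1-w\sum_{s=0}^{R-1}s!\,\alpha_s\,n^{-s}+O(n^{-R})$, the factor $nw$ absorbing one power of $n$ and the exponentially small remainder being harmless. Substituting $\alpha_s$ and matching the coefficient of $n^{-r}$ against \e{wb} term by term, the identity
\begin{equation*}
  r!\binom{-(r+1)}{j} = (-1)^j \frac{(r+j)!}{j!}
\end{equation*}
converts the binomial coefficient and the factor $r!$ into the shape $(r+k)!/k!$ appearing in $U_r(w;v)$, while the leading $1$ from Lemma \ref{xm} supplies the Kronecker delta $\delta_{r,0}$ (one checks directly that the constant term equals $1-w/(w-1)=1/(1-w)=U_0(w;v)$). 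This completes the identification of the coefficients with the rational functions $U_r(w;v)$.
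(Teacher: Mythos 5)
Your proposal is correct and follows essentially the same route as the paper: apply Theorem \ref{il} to the integral of Lemma \ref{xm} with $z_0=1$, $\mu=1$, $p_0=w-1$, $p_s=(-1)^{s+1}/(s+1)$, $q_s=\binom{v}{s}$, then compute $\alpha_s$ via Proposition \ref{wojf} and simplify with \e{mulk} and \e{muln}. Your extra details (the monotonicity check of $\Re(p)$ on $[1/2,1]$, the rescaling of the De Moivre polynomials, and the verification of the constant term $1/(1-w)$) are all accurate and simply flesh out what the paper leaves implicit.
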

\begin{proof}
Recall that $p(z)= w(1-z)+\log z$.
Theorem \ref{il} may be applied  to \e{lmf} since $\Re(p(z))$ is strictly increasing for $1/2\lqs z \lqs 1$ when $\Re(w)\lqs 1$.
As $p(1)=0$, the expansion of $p(z)$ at $z=1$ can be written as
\begin{equation*}
  p(z)-p(1)=-(w-1)(z-1)-\sum_{j=2}^\infty \frac{(-1)^{j}}{j} (z-1)^{j}.
\end{equation*}
This time our setup with Assumptions \ref{ma0} and \e{pqe} has
\begin{equation}\label{mu1}
p_0=w-1, \quad \mu=1,  \quad p_s=(-1)^{s+1}/(s+1) \ \text{ for } \ s\gqs 1,  \quad \theta_\ell=-\arg(w-1)+2\pi \ell.
\end{equation}
Also $q_s=\binom{v}{s}$, $\cc$ is the interval from $1$ to $1/2$  and $k=0$ in \e{wim}; when $\mu=1$ then $k$ can be any integer and there is  only `one' direction with $\Re(p(z))$ decreasing.
Computing $\alpha_s$ with Proposition \ref{wojf} and simplifying with \e{mulk} and \e{muln} completes the proof.
\end{proof}

\begin{prop} \label{abc2}
 Let $w$ and $v$ be fixed complex numbers with $\Re(w)\gqs 1$.  Then as real $n \to \infty$,
\begin{equation} \label{wat}
  T_n(w;v) = -U_0(w;v) - \frac{U_1(w;v)}{n} - \frac{U_2(w;v)}{n^2} - \cdots - \frac{U_{R-1}(w;v)}{n^{R-1}}  +O\left(\frac{1}{n^{R}}\right),
\end{equation}
for an implied constant depending only on $w$, $v$ and $R$.
\end{prop}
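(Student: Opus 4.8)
The plan is to mirror the proof of Proposition \ref{abc} almost verbatim, the only genuine change being a global sign that will emerge from the structure of \e{lmf2} rather than \e{lmf}. First I would invoke Lemma \ref{xm}: since $\Re(w)\gqs 1$, equation \e{lmf2} reduces everything to the asymptotics of $nw\int_1^{3/2} e^{n\cdot p(z)}z^v\,dz$, up to the additive constant $-1$ and the exponentially small error $O(e^{-n/30})$. So the entire task is to run Theorem \ref{il} on this one integral.

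Next I would verify that Theorem \ref{il} applies with $z_0=1$ and $\cc$ the interval from $1$ to $3/2$. For real $z$ one has $\tfrac{d}{dz}\Re(p(z)) = 1/z - \Re(w)$, which is $\lqs 1-\Re(w)\lqs 0$ on $[1,3/2]$ and strictly negative for $z>1$; hence $\Re(p(z))<\Re(p(1))=0$ on $\cc\setminus\{1\}$, so condition \e{c1} holds. The crucial point is that the Taylor data at $z_0=1$ is literally the same as in Proposition \ref{abc}, namely $p_0=w-1$, $\mu=1$, $p_s=(-1)^{s+1}/(s+1)$ for $s\gqs 1$, and $q_s=\binom{v}{s}$. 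Because $\mu=1$ the sector in Theorem \ref{il} has angular width $2\pi$, so every $k$ is admissible and the phase factor $e^{2\pi i k(s+1)/\mu}=1$; I would take $k=0$, exactly as before.

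Since the coefficients $\alpha_s$ supplied by Proposition \ref{wojf} depend only on $s$, $p$, $q$ and $z_0$, all of which are unchanged, they are the identical rational functions of $w$ occurring in the proof of Proposition \ref{abc}. Thus $\int_1^{3/2} e^{n\cdot p(z)}z^v\,dz = \sum_{s}\G(s+1)\alpha_s/n^{s+1}+O(n^{-R})$, and \e{lmf2} gives $T_n(w;v) = -1 + w\sum_{s}\G(s+1)\alpha_s/n^{s}+O(n^{-R})$. In Proposition \ref{abc} the same coefficients, assembled from $S_n(w;v)=1+nw\int_{1/2}^1 = 1 - nw\int_1^{1/2}$, produced $1 - w\sum_s\G(s+1)\alpha_s/n^s = \sum_r U_r(w;v)/n^r$. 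The expansion just obtained for $T_n$ is the exact negative of this, so $T_n(w;v) = -\sum_r U_r(w;v)/n^r + O(n^{-R})$, which is \e{wat}. The simplification of the $\alpha_s$ via \e{mulk} and \e{muln} into the closed form \e{wb} is the same as in Proposition \ref{abc} and need not be redone.

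The one place deserving care, and which I would flag as the main (if modest) obstacle, is the bookkeeping claim that the $\alpha_s$ are genuinely unchanged even though the contour now departs to the \emph{right} of $z_0=1$ rather than to the left. This is exactly where $\mu=1$ is essential: for a simple maximum the directional phase $e^{2\pi i k(s+1)/\mu}$ collapses to $1$ for every admissible $k$, so reversing the direction of departure cannot affect the saddle-point coefficients. Consequently the whole difference between \e{wa} and \e{wat} is attributable to the $\pm 1$ and the interval orientation in \e{lmf} versus \e{lmf2}, not to the coefficients themselves. I would also note that $w\neq 1$ is implicit here, since the $U_r(w;v)$ have poles at $w=1$; the genuinely different case $w=1$ (where $p_0=0$ and $\mu=2$) is the situation already treated in Section \ref{w=1}.
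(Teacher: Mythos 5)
Your proposal is correct and follows essentially the same route as the paper, which likewise disposes of Proposition \ref{abc2} by citing the monotonicity of $\Re(p(z))$ on $[1,3/2]$ and rerunning the computation of Proposition \ref{abc} on \e{lmf2}; your extra care about the orientation of the contour and the collapse of the phase factor when $\mu=1$ is exactly the right bookkeeping. Your observation that $w\neq 1$ must be implicitly assumed (since the $U_r(w;v)$ have poles there) is also consistent with how the paper later invokes this proposition.
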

\begin{proof}
Theorem \ref{il} may be applied  to \e{lmf2} since $\Re(p(z))$ is strictly decreasing for $1\lqs z \lqs 3/2$ when $\Re(w)\gqs 1$. The same calculation  as for Proposition \ref{abc} gives the result.
\end{proof}

Our asymptotics for $S_n(w;v)$ and $T_n(w;v)$ can now be assembled. The functions $\rho_r(v)$, $\g_r(v)$ and $U_r(w;v)$ are defined in \e{rhrv}, \e{gnv2} and \e{wb} respectively. Also recall the partition of the $w$-plane shown in Figure \ref{fig}.
If $|w e^{1-w}|=1$, write $w e^{1-w}=e^{-i \varphi(w)}$ with $\varphi(w)$ real.

\begin{theorem} \label{msth}
Let $w$ and $v$ be complex numbers. As real $n \to \infty$,
\begin{align}
  S_n(1;v) & =\sum_{r=0}^{R-1} \frac{1}{n^r} \left( \rho_r(v) + \frac{\g_r(v)}{2}\sqrt{2\pi n} \right) + O\left( \frac{1}{n^{R-1/2}}\right), \label{s1}\\
  S_n(w;v) & = \sum_{r=0}^{R-1} \frac{U_r(w;v)}{n^r}   + O\left( \frac{1}{n^R}\right) \qquad (w\in \mathcal X \cup \mathcal Y \cup \mathcal S), \label{s2}\\
  S_n(w;v) & = \sum_{r=0}^{R-1} \frac{1}{n^r} \left( U_r(w;v) + \g_r(v)  \frac{e^{n i  \cdot \varphi(w)}}{w^v} \sqrt{2\pi n}  \right) + O\left( \frac{1}{n^{R-1/2}}\right)\qquad (w\in \mathcal T), \label{s3}\\
  S_n(w;v) & = \frac{\sqrt{2\pi n} }{(w e^{1-w})^n w^v}
  \left(\sum_{r=0}^{R-1} \frac{\g_r(v)}{ n^r}  + O\left(\frac 1{n^{R}} \right)\right)\qquad (w\in \mathcal Z). \label{s4}
\end{align}
\end{theorem}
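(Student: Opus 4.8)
The plan is to combine the additive identity \e{tt3}, relating $S_n(w;v)+T_n(w;v)$ to the Gamma function, with the individual rational expansions of $S_n$ and $T_n$ from Propositions \ref{abc} and \ref{abc2}, and with Stirling's approximation as in Proposition \ref{xpg}. First I would write \e{tt3} as $S_n(w;v)+T_n(w;v)=e^{nw}\G(n+v+1)/(nw)^{n+v}$ and insert \e{gnv}. Using the simplification $e^{nw}n^{n+v}/\big(e^n(nw)^{n+v}\big)=1/\big((we^{1-w})^n w^v\big)$, this collapses to the closed form
\[
S_n(w;v)+T_n(w;v)=\frac{\sqrt{2\pi n}}{(we^{1-w})^n\,w^v}\left(\sum_{r=0}^{R-1}\frac{\g_r(v)}{n^r}+O\left(\frac{1}{n^R}\right)\right),
\]
with the convention $\g_0(v)=1$. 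The whole theorem then reduces to reading off, region by region, how the prefactor $1/(we^{1-w})^n$ competes with the purely rational expansions of $S_n$ and $T_n$.

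For \e{s1} I would start from the $w=1$ relation \e{lie}, i.e. $S_n(1;v)=\theta_n(v)+\tfrac12 e^n\G(n+v+1)/n^{n+v}$. Theorem \ref{ext} gives $\theta_n(v)=\sum_{r=0}^{R-1}\rho_r(v)/n^r+O(n^{-R})$, while the closed form above at $w=1$ (where $we^{1-w}=1$ and $w^v=1$) gives $\tfrac12 e^n\G(n+v+1)/n^{n+v}=\tfrac12\sqrt{2\pi n}\sum_{r=0}^{R-1}\g_r(v)/n^r+O(n^{1/2-R})$. Adding the two yields \e{s1}; the half-power loss in the error is exactly the Stirling error multiplied by $\sqrt{n}$.

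The remaining three identities are dictated by the size of $|we^{1-w}|$. For \e{s2} with $\Re(w)\lqs 1$ and $w\neq 1$ --- which covers all of $\mathcal Y$ and $\mathcal S$ and the bulk of $\mathcal X$ --- Proposition \ref{abc} applies verbatim. For \e{s3} we have $w\in\mathcal T$, so $\Re(w)\gqs 1$ and Proposition \ref{abc2} gives $T_n(w;v)=-\sum_{r=0}^{R-1}U_r(w;v)/n^r+O(n^{-R})$; subtracting this from the closed form and writing $1/(we^{1-w})^n=e^{ni\varphi(w)}$ (legitimate since $|we^{1-w}|=1$ there) merges the two contributions into \e{s3}, again with a $\sqrt{n}$-inflated error. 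For \e{s4} we have $w\in\mathcal Z$, still with $\Re(w)\gqs 1$, so the same subtraction gives $S_n(w;v)=\tfrac{\sqrt{2\pi n}}{(we^{1-w})^n w^v}\sum_r\g_r(v)/n^r+\sum_r U_r(w;v)/n^r+O(n^{-R})$; since $|we^{1-w}|<1$ makes the prefactor grow exponentially while the $U_r$ sum is only $O(1)$, the rational part is swallowed into the error of \e{s4}.

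The main obstacle is that $\mathcal X$ is not contained in $\{\Re(w)\lqs 1\}$: on the line $w=1+iy$ one has $|we^{1-w}|=\sqrt{1+y^2}>1$ for $y\neq 0$, so the open set $\mathcal X$ sends two thin cusps into the half-plane $\Re(w)>1$ near $w=1$. At those points Proposition \ref{abc} is unavailable, and I would instead obtain \e{s2} from the closed form together with Proposition \ref{abc2} (valid because $\Re(w)\gqs 1$): here the prefactor $1/(we^{1-w})^n$ is exponentially \emph{small}, since $|we^{1-w}|>1$ throughout $\mathcal X$ by definition, so the Stirling contribution decays faster than any power of $n$ and only $\sum_r U_r(w;v)/n^r$ remains, matching \e{s2}. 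Checking that this derivation agrees with the direct one on the overlap $\Re(w)=1$ (where both propositions hold), and pinning down exactly which part of $\mathcal X$ has $\Re(w)>1$, is the delicate point; everything else is bookkeeping on powers of $n$.
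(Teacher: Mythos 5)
Your proposal is correct and follows essentially the same route as the paper: \e{s1} from \e{lie}, Theorem \ref{ext} and Proposition \ref{xpg}; \e{s2} from Proposition \ref{abc} on $\Re(w)\lqs 1$, $w\neq 1$; and the remaining cases (including the portion of $\mathcal X$ with $\Re(w)>1$) by combining \e{snw3} with Propositions \ref{xpg} and \ref{abc2} and sorting by the size of $|we^{1-w}|$, exactly as in the paper's equation \e{rrr}. The point you flag as delicate --- that $\mathcal X$ is not contained in $\{\Re(w)\lqs 1\}$ --- is precisely the one the paper handles the same way, so there is no gap.
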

\begin{proof}
The asymptotic \e{s1} is a consequence of \e{lie}, Theorem \ref{ext} and Proposition \ref{xpg}. It can be seen from Proposition \ref{abc} that \e{s2} is true for $\Re(w)\lqs 1$ and $w \neq 1$ and this includes $\mathcal Y$, $\mathcal S$ and  part of $\mathcal X$. For $\Re(w)> 1$, starting from \e{snw3} and using Propositions \ref{xpg}, \ref{abc2},
\begin{align}
  S_n(w;v) & =\frac{e^{n w}}{(n w)^{n+v}} \G(n+v+1) - T_n(w;v) \notag\\
  & =\frac{e^{n w}}{(n w)^{n+v}}\sqrt{2\pi n}\frac{n^{n+v}}{e^n} \left(\sum_{r=0}^{R-1}\frac{\g_r(v)}{n^r}+ O\left(\frac{1}{n^{R}}\right)\right) + \sum_{r=0}^{R-1}\frac{U_r(w;v)}{n^r}+O\left(\frac{1}{n^{R}}\right) \notag\\
  & =\frac{\sqrt{2\pi n}}{(w e^{1-w})^{n}w^v} \left(\sum_{r=0}^{R-1}\frac{\g_r(v)}{n^r}+ O\left(\frac{1}{n^{R}}\right)\right) + \sum_{r=0}^{R-1}\frac{U_r(w;v)}{n^r}+O\left(\frac{1}{n^{R}}\right).\label{rrr}
\end{align}
If $|w e^{1-w}|>1$ then the part of \e{rrr} containing this factor decays exponentially and we obtain \e{s2} for the remaining piece of $\mathcal X$. The case $|w e^{1-w}|=1$ in \e{rrr} gives \e{s3} and lastly $|w e^{1-w}|<1$ in \e{rrr} gives \e{s4}.
\end{proof}

\begin{theorem} \label{mtth}
Let $w$ and $v$ be complex numbers. As real $n \to \infty$,
\begin{align}
  T_n(1;v) & =\sum_{r=0}^{R-1} \frac{1}{n^r} \left( -\rho_r(v) + \frac{\g_r(v)}{2}\sqrt{2\pi n} \right) + O\left( \frac{1}{n^{R-1/2}}\right), \label{t1} \\
  T_n(w;v) & = -\sum_{r=0}^{R-1} \frac{U_r(w;v)}{n^r}   + O\left( \frac{1}{n^R}\right) \qquad (w\in \mathcal X \cup \mathcal Z \cup \mathcal T), \label{t2} \\
  T_n(w;v) & = \sum_{r=0}^{R-1} \frac{1}{n^r} \left( -U_r(w;v) + \g_r(v)  \frac{e^{n i \cdot \varphi(w)}}{w^v} \sqrt{2\pi n}  \right) + O\left( \frac{1}{n^{R-1/2}}\right)\qquad (w\in \mathcal S), \label{t3} \\
  T_n(w;v) & = \frac{\sqrt{2\pi n} }{(w e^{1-w})^n w^v}
  \left(\sum_{r=0}^{R-1} \frac{\g_r(v)}{ n^r}  + O\left(\frac 1{n^{R}} \right)\right)\qquad (w\in \mathcal Y, \ w\neq 0). \label{t4}
\end{align}
\end{theorem}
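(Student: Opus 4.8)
The plan is to mirror the proof of Theorem \ref{msth} almost verbatim, exploiting the symmetric relation \e{snw3},
\[
T_n(w;v) = \frac{e^{n w}}{(n w)^{n+v}}\G(n+v+1) - S_n(w;v),
\]
which lets every statement about $T_n$ be deduced from the corresponding statement about $S_n$ together with the Stirling expansion of Proposition \ref{xpg}. For the diagonal case \e{t1}, I would start from \e{bx}, which gives $T_n(1;v) = S_n(1;v) - 2\theta_n(v)$, and then substitute the expansion \e{s1} for $S_n(1;v)$ and Theorem \ref{ext} for $\theta_n(v)$. The term $\frac{\g_r(v)}{2}\sqrt{2\pi n}$ survives unchanged while the $\rho_r(v)$ contribution flips sign, producing \e{t1} at once.

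Next I would dispose of \e{t2} on the part of its region where $\Re(w)\gqs 1$. Since $\mathcal Z$ and $\mathcal T$ both lie in $\Re(w)>1$, and the portion of $\mathcal X$ with $\Re(w)\gqs 1$ is also covered, Proposition \ref{abc2} applies directly and yields $T_n(w;v) = -\sum_{r} U_r(w;v)/n^r + O(n^{-R})$ with no further work.

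The remaining cases all have $\Re(w)<1$ with $w\neq 1$, covering the rest of $\mathcal X$, all of $\mathcal S$, and all of $\mathcal Y$. Here I would feed Proposition \ref{xpg} and Proposition \ref{abc} (valid since $\Re(w)\lqs 1$, $w\neq 1$) into \e{snw3}, obtaining the exact analogue of \e{rrr},
\[
T_n(w;v) = \frac{\sqrt{2\pi n}}{(w e^{1-w})^n w^v}\left(\sum_{r=0}^{R-1}\frac{\g_r(v)}{n^r} + O\left(\frac{1}{n^R}\right)\right) - \sum_{r=0}^{R-1}\frac{U_r(w;v)}{n^r} + O\left(\frac{1}{n^R}\right).
\]
The outcome is then dictated by the size of $|w e^{1-w}|$: when $|w e^{1-w}|>1$ (the remaining piece of $\mathcal X$) the exponential factor decays and leaves \e{t2}; when $|w e^{1-w}|=1$ (region $\mathcal S$) writing $w e^{1-w}=e^{-i\varphi(w)}$ turns the first term into the oscillatory contribution of \e{t3}; and when $|w e^{1-w}|<1$ (region $\mathcal Y$) that term grows and dominates, leaving \e{t4}.

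There is no genuinely new analytic obstacle, since Theorem \ref{il}, Proposition \ref{xpg}, and Propositions \ref{abc}--\ref{abc2} already supply every ingredient. The only point requiring care is the bookkeeping of which region maps to which formula: relative to Theorem \ref{msth} the roles of $\mathcal Y$ and $\mathcal Z$, and of $\mathcal S$ and $\mathcal T$, are interchanged, reflecting that the exponentially large term now appears for $\Re(w)<1$ rather than $\Re(w)>1$. I would double-check that the boundary curves $\mathcal S$ and $\mathcal T$ are assigned consistently with the inequalities $\Re(w)\lqs 1$ and $\Re(w)\gqs 1$ underlying Propositions \ref{abc} and \ref{abc2}.
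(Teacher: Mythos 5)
Your proposal is correct and follows essentially the same route as the paper: \e{t2} for $\Re(w)\gqs 1$ comes straight from Proposition \ref{abc2}, and the remaining cases come from substituting Propositions \ref{xpg} and \ref{abc} into \e{snw3} and sorting by the size of $|we^{1-w}|$, exactly as in the paper's displayed analogue of \e{rrr}. The only cosmetic difference is that for \e{t1} you route through \e{bx} and Theorem \ref{ext} rather than through \e{snw3} and Proposition \ref{xpg} directly, which is an equivalent computation.
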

\begin{proof}
The asymptotic \e{t1} is a consequence of \e{snw3},  Proposition \ref{xpg} and \e{s1}.  It can be seen from Proposition \ref{abc2} that \e{t2} is true for $\Re(w)\gqs 1$ and $w \neq 1$ and this includes $\mathcal Z$, $\mathcal T$ and  part of $\mathcal X$. For $\Re(w)< 1$ and $w\neq 0$, starting from \e{snw3} and using Propositions \ref{xpg}, \ref{abc},
\begin{align}
  T_n(w;v) & =\frac{e^{n w}}{(n w)^{n+v}} \G(n+v+1) - S_n(w;v) \notag\\
  & =\frac{\sqrt{2\pi n}}{(w e^{1-w})^{n}w^v} \left(\sum_{r=0}^{R-1}\frac{\g_r(v)}{n^r}+ O\left(\frac{1}{n^{R}}\right)\right) - \sum_{r=0}^{R-1}\frac{U_r(w;v)}{n^r}+O\left(\frac{1}{n^{R}}\right).\label{rrr2}
\end{align}
The remaining cases of \e{t2}, \e{t3} and \e{t4} follow from \e{rrr2} depending on the size of $|w e^{1-w}|$.
\end{proof}

Theorems \ref{msth} and \ref{mtth} agree with \cite[Eqs. (5), (6)]{Bu63} for $w\neq 1$ and $v=0$. His function $U_r(z)$ equals $U_r(z,0)-\delta_{r,0}$ here.
Since $\g_0(v)=1$ we may state the following corollaries.
\begin{cor}
For all $w$, $v\in \C$ we have that $S_n(w;v)$ remains bounded as $n\to \infty$ if and only if $w\in \mathcal X \cup \mathcal Y \cup \mathcal S$. Also $T_n(w;v)$ remains bounded  if and only if $w\in \mathcal X \cup \mathcal Z\cup \mathcal T$.
\end{cor}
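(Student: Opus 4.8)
The plan is to read the four claims off directly from the region-by-region expansions in Theorems \ref{msth} and \ref{mtth}, invoking the partition $\C = \mathcal X \cup \mathcal Y \cup \mathcal Z \cup \mathcal S \cup \mathcal T \cup \{1\}$ recorded in Figure \ref{fig}. The single fact that drives everything is that the leading coefficient $\g_0(v)=1$ is nonzero: each of the growth terms appearing below carries a factor $\g_0(v)$, and it is precisely the survival of these terms that separates the bounded from the unbounded regions.

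First I would dispose of $S_n(w;v)$. On $\mathcal X \cup \mathcal Y \cup \mathcal S$, equation \e{s2} gives $S_n(w;v) = U_0(w;v)+O(1/n) \to 1/(1-w)$, so $S_n$ is bounded. On the complementary set $\{1\} \cup \mathcal T \cup \mathcal Z$ I would point to the unbounded leading term in each of the three remaining formulas: \e{s1} has main term $\tfrac12 \g_0(v)\sqrt{2\pi n} = \tfrac12 \sqrt{2\pi n}$ at $w=1$; on $\mathcal T$, \e{s3} has main term $\g_0(v) w^{-v} e^{ni\varphi(w)}\sqrt{2\pi n}$, whose modulus is $\sqrt{2\pi n}/|w^v| \to \infty$ since $|e^{ni\varphi(w)}|=1$; and on $\mathcal Z$, \e{s4} has modulus comparable to $\sqrt{2\pi n}\,|we^{1-w}|^{-n}/|w^v|$, which grows exponentially because $|we^{1-w}|<1$ there while $\g_0(v)=1$ keeps the bracketed series bounded away from $0$ for large $n$. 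As these regions exhaust $\C$, the characterization for $S_n$ follows.

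The argument for $T_n(w;v)$ is the mirror image via Theorem \ref{mtth}: boundedness on $\mathcal X \cup \mathcal Z \cup \mathcal T$ is immediate from \e{t2} (with limit $-U_0(w;v)=1/(w-1)$), while unboundedness on the complement $\{1\} \cup \mathcal S \cup \mathcal Y$ comes from \e{t1}, \e{t3} and \e{t4} exactly as above, the exponential case \e{t4} using $\mathcal Y \subset \{|we^{1-w}|<1\}$. One point worth recording is that $T_n(0;v)$ is undefined; but $0 \in \mathcal Y$ already lies in the unbounded region, so restricting to $w\neq 0$ does not alter the stated equivalence.

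I do not anticipate a real obstacle: once the expansions of Theorems \ref{msth} and \ref{mtth} are available, the corollary is just a comparison of the order of growth of the leading term in each region. The only matters needing a word of care are the nonvanishing of $\g_0(v)$, so that no growth term cancels by accident, and the observation that the oscillatory factor $e^{ni\varphi(w)}$ on $\mathcal S$ and $\mathcal T$ has constant modulus $1$ and therefore cannot prevent the magnitude of the leading term from tending to infinity.
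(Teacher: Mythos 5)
Your proposal is correct and follows essentially the same route as the paper, which states this corollary immediately after Theorems \ref{msth} and \ref{mtth} with only the remark that $\g_0(v)=1$ --- precisely the nonvanishing you identify as the key point. Your region-by-region reading of \e{s1}--\e{s4} and \e{t1}--\e{t4}, including the note that the oscillatory factor on $\mathcal S$ and $\mathcal T$ has modulus one, is just the explicit version of that one-line justification.
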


\begin{cor}
For all $w$, $v\in \C$ we have that $S_n(w;v)/\sqrt{n}$ remains bounded as $n\to \infty$ if and only if $w\notin \mathcal Z$. Also $T_n(w;v)/\sqrt{n}$ remains bounded  if and only if $w\notin \mathcal Y$.
\end{cor}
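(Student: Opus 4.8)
The plan is to read the two dichotomies straight off the region-by-region expansions in Theorems \ref{msth} and \ref{mtth}, the only genuine input being the non-vanishing of the leading coefficient $\g_0(v)=1$ noted just above. Recall the partition $\C = \mathcal X \cup \mathcal Y \cup \mathcal Z \cup \mathcal S \cup \mathcal T \cup \{1\}$, so that the condition $w\notin \mathcal Z$ means $w\in \mathcal X \cup \mathcal Y \cup \mathcal S \cup \mathcal T \cup \{1\}$, and (for $w\neq 0$) the condition $w\notin \mathcal Y$ means $w\in \mathcal X \cup \mathcal Z \cup \mathcal S \cup \mathcal T \cup \{1\}$. In each expansion the factor $e^{n i \varphi(w)}$ appearing on the boundary curves has modulus $1$, and $w^{-v}$ is a fixed nonzero constant, so these never affect boundedness.

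First I would treat $S_n(w;v)/\sqrt n$. When $w \in \mathcal X \cup \mathcal Y \cup \mathcal S$, \e{s2} gives $S_n(w;v) = \bo 1$; when $w=1$, \e{s1} gives $S_n(1;v) = \tfrac12\sqrt{2\pi n} + \bo 1$; and when $w \in \mathcal T$, \e{s3} gives a leading term of modulus a fixed constant times $\sqrt n$. In all of these cases $S_n(w;v) = \bo{\sqrt n}$, so $S_n(w;v)/\sqrt n$ is bounded, covering exactly $w \notin \mathcal Z$. Conversely, for $w \in \mathcal Z$, \e{s4} reads
\begin{equation*}
  S_n(w;v) = \frac{\sqrt{2\pi n}}{(w e^{1-w})^n w^v}\left( \g_0(v) + \bo{1/n} \right),
\end{equation*}
and since $\g_0(v)=1$ the bracket tends to $1$, while $|w e^{1-w}|<1$ forces $|w e^{1-w}|^{-n} \to \infty$; hence $|S_n(w;v)/\sqrt n| \to \infty$ and the quotient is unbounded. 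This establishes the first equivalence.

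The argument for $T_n(w;v)/\sqrt n$ is entirely parallel, now drawing on Theorem \ref{mtth}: \e{t2} gives $T_n(w;v) = \bo 1$ on $\mathcal X \cup \mathcal Z \cup \mathcal T$, \e{t1} gives $\tfrac12\sqrt{2\pi n}+\bo 1$ at $w=1$, and \e{t3} gives a leading term of modulus $\asymp \sqrt n$ on $\mathcal S$, so $T_n(w;v)/\sqrt n$ is bounded for every $w \notin \mathcal Y$; while for $w \in \mathcal Y$ with $w \neq 0$, \e{t4} has the same shape as \e{s4} and the non-vanishing of $\g_0(v)$ again yields unboundedness. The one point needing care, and really the only step that is not pure quotation, is precisely this: the exponentially growing prefactors in \e{s4} and \e{t4} could in principle be cancelled if their leading coefficient vanished, but $\g_0(v)=1 \neq 0$ (from \e{gnv2}, together with $\dm_{0,0}=1$ and $(-1)!!=1$) rules this out and makes both dichotomies clean. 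Finally, the excluded value $w=0$ lies in $\mathcal Y$ but leaves $T_n(0;v)$ undefined, so it plays no role.
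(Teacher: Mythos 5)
Your proposal is correct and follows exactly the route the paper intends: the corollary is stated as an immediate consequence of Theorems \ref{msth} and \ref{mtth} together with the observation that $\g_0(v)=1$, which is precisely the non-vanishing of the leading coefficient you single out as the one non-trivial point. Your region-by-region verification, including the remark about the excluded value $w=0$, simply makes explicit what the paper leaves to the reader.
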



\section{Another description of $\rho_r(v)$, $\g_r(v)$ and $U_r(w;v)$} \label{six}
As we have seen in Theorems \ref{msth} and \ref{mtth}, the asymptotics of $S_n(w;v)$ and $T_n(w;v)$ can be completely described in terms of the functions
\begin{align*}
  \rho_r(v)= \delta_{r,0}-{} & \sum_{m=0}^{2r+1} (-1)^m\binom{v}{2r+1-m}\sum_{k=0}^{m} \frac{(2r+2k)!!}{(-1)^k k!} \dm_{m,k}\left( \frac{1}{3}, \frac{1}{4}, \frac{1}{5}, \dots \right), \\
  \g_r(v)  =  & \sum_{m=0}^{2r} (-1)^m \binom{v}{2r-m}\sum_{k=0}^{m}  \frac{(2r+2k-1)!!}{(-1)^k k!} \dm_{m,k}\left( \frac{1}{3}, \frac{1}{4}, \frac{1}{5}, \dots \right),\\
  U_r(w;v)  = \delta_{r,0}-{} & \sum_{m=0}^r (-1)^{m}\binom{v}{r-m}\sum_{k=0}^{m}  \frac{ w}{(w-1)^{r+k+1}}
  \frac{(r+k)!}{ k!}  \dm_{m,k}\left( \frac{1}{2}, \frac{1}{3}, \frac{1}{4},  \dots \right).
\end{align*}
In this section we find variants of these formulas where the numbers $1/j$ inside the De Moivre polynomials are replaced by $1/j!$.

Integrating \e{snw2} by parts shows
\begin{align}
  S_n(w;v) & = 1+n w  \int_0^1 e^{n w(1-z)} z^{n+v} \, dz \notag\\
   & = (n+v) \int_0^1 e^{n w(1-z)} z^{n+v-1} \, dz  \notag\\
   & = (n+v) \int_{-\infty}^0 e^{n w(1-e^z)} e^{(n+v)z} \, dz, \label{here}
\end{align}
after the change of variables $z\to e^z$.
Define
\begin{equation}\label{pz2}
  \tilde p(z)= \tilde p(w;z) := w(1-e^z)+  z.
\end{equation}

\begin{lemma} \label{par}
Let $w$, $v$ be in $\C$. As real $n \to \infty$,
\begin{alignat}{2} \label{par2}
    S_n(w;v) & = (n+v)   \int_{-2}^0 e^{n \cdot \tilde p(z)} e^{v z} \, dz +O\left(e^{-n/2}\right) \qquad & &(\Re(w)\lqs 1),\\
    T_n(w;v) & = (n+v)   \int_{0}^2 e^{n \cdot \tilde p(z)} e^{v z} \, dz +O\left(e^{-n/2}\right) \qquad & &(\Re(w)\gqs 1), \label{par3}
\end{alignat}
for implied constants depending only on $w$ and $v$.
\end{lemma}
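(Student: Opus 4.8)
The plan is to prove \e{par2} and \e{par3} by the same truncation-and-tail-estimation strategy already used in Lemma \ref{xm}, but now applied to the integral representation \e{here}. We start from the identity
\begin{equation*}
  S_n(w;v) = (n+v) \int_{-\infty}^0 e^{n \cdot \tilde p(z)} e^{v z} \, dz,
\end{equation*}
which is exactly \e{here} with $\tilde p$ as in \e{pz2}, and the analogous representation for $T_n(w;v)$ over $[0,\infty)$ that follows from \e{ct} (valid for $\Re(w)\gqs 1$) under the same substitution $z\to e^z$. The goal is then to show that restricting the range of integration from $(-\infty,0]$ to $[-2,0]$ (respectively from $[0,\infty)$ to $[0,2]$) introduces only an error of size $O(e^{-n/2})$.

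For \e{par2} the key is to bound the discarded tail
\begin{equation*}
  (n+v) \int_{-\infty}^{-2} e^{n \cdot \tilde p(z)} e^{v z} \, dz.
\end{equation*}
On this range $z\lqs -2$, so $e^z$ is small and $\Re(\tilde p(z)) = \Re(w)(1-e^z\cos(\Im\text{-part terms})) + z$; more simply, since $z$ is real the exponent is $\tilde p(z)=w(1-e^z)+z$ and $|e^{n\tilde p(z)}e^{vz}| = e^{n(\Re(w)(1-e^z)+z)}e^{\Re(v)z}$. I would establish an elementary pointwise inequality, analogous to $e^{1-z}z\lqs z^{1/10}$ in Lemma \ref{xm}, showing that for $z\lqs -2$ and $\Re(w)\lqs 1$ the quantity $\Re(w)(1-e^z)+z$ is bounded above by something like $z/2$ (the dominant decaying contribution coming from the linear term $z$, with the bounded term $\Re(w)(1-e^z)$ under control because $e^z\lqs e^{-2}$). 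Integrating $e^{nz/2}$ over $(-\infty,-2]$ against the polynomially-moderated factor $e^{\Re(v)z}$ then yields the stated $O(e^{-n/2})$ bound once $n$ is large enough to absorb the fixed shift from $e^{\Re(v)z}$, exactly as in the final paragraph of the proof of Lemma \ref{xm}.

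The argument for \e{par3} is the mirror image on $[2,\infty)$, but here lies the main obstacle: for $z\to +\infty$ one has $e^z\to\infty$, so the sign of the exponent depends delicately on $\Re(w)$. When $\Re(w)\gqs 1$ the term $-\Re(w)e^z$ eventually dominates $z$ and forces $\Re(\tilde p(z))\to-\infty$, but one must verify the decay is at least as fast as $e^{-nz/2}$ uniformly, and—crucially—justify that the integral representation of $T_n(w;v)$ over $[0,\infty)$ converges in the first place (this parallels the arc-estimate argument establishing \e{ct} and \e{ct2}, where the line of integration was pushed to the positive real axis). I would therefore first confirm absolute convergence on $[0,\infty)$ by showing $\Re(w)(1-e^z)+z\to-\infty$, then extract the pointwise bound $\Re(w)(1-e^z)+z\lqs -z/2$ for $z\gqs 2$ and $\Re(w)\gqs 1$, and finally integrate to obtain the tail estimate. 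The remaining bookkeeping—choosing $n$ large enough that the fixed factor $e^{\Re(v)z}$ is dominated and confirming the implied constants depend only on $w$ and $v$—is routine and mirrors Lemma \ref{xm} verbatim.
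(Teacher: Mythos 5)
Your proposal is correct and follows essentially the same route as the paper, whose proof is just a two-line sketch: start from \e{snw3} and \e{here} (with the analogous representation of $T_n(w;v)$ over $[0,\infty)$), truncate at $\mp 2$, and control the tails via the elementary inequality $1+z-e^z < -|z|/2$ for $|z|\gqs 2$ combined with $\Re(w)(1-e^z)\lqs 1-e^z$ under the respective sign hypotheses on $\Re(w)$ --- which is exactly the pointwise bound $\Re(\tilde p(z))\lqs -|z|/2$ you propose.
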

\begin{proof}
The proof is based on \e{snw3} and \e{here}. It is similar to that of Lemma \ref{xm}, requiring the inequality $1+z-e^z < -|z|/2$ for real $z$ with  $|z|\gqs 2$.
\end{proof}

\begin{prop} \label{pilcc}
Let $S \in \Z_{\gqs 1}$ be  fixed  and set $w=1$ so that $\tilde p(z)= \tilde p(1;z) := 1-e^z+  z$. As $n \to \infty$,
\begin{align}
  \int_{-2}^0 e^{n \cdot \tilde p(z)} e^{v z} \, dz & = \sum_{s=0}^{S-1} \G\left( \frac{s+1}2\right) \frac{(-1)^s \tilde \beta_s(v)}{n^{(s+1)/2}} + O\left(\frac 1{n^{(S+1)/2}} \right),\label{incc}\\
   \int_{0}^2 e^{n \cdot \tilde p(z)} e^{v z} \, dz & = \sum_{s=0}^{S-1} \G\left( \frac{s+1}2\right) \frac{ \tilde \beta_s(v)}{n^{(s+1)/2}} + O\left(\frac 1{n^{(S+1)/2}} \right), \label{in2cc}
\end{align}
for
\begin{equation}
   \tilde \beta_s(v) =\sum_{m=0}^s \frac{v^{s-m}}{(s-m)!} \sum_{k=0}^m 2^{(s-1)/2+k} \binom{-(s+1)/2}{k}
  \dm_{m,k}\left( \frac{1}{3!}, \frac{1}{4!}, \frac{1}{5!}, \dots \right). \label{in3cc}
\end{equation}
\end{prop}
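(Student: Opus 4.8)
The plan is to prove Proposition \ref{pilcc} in exact parallel with Proposition \ref{pil}, by applying Theorem \ref{il} at the saddle-point $z_0=0$ of $\tilde p(z)=1-e^z+z$, now carrying the weight $q(z)=e^{vz}$ in place of $z^v$. First I would record the local data at $z_0=0$. Since $\tilde p(0)=0$ and
\begin{equation*}
  \tilde p(z)-\tilde p(0) = -\sum_{j=2}^\infty \frac{z^j}{j!} = -\sum_{s=0}^\infty \frac{z^{s+2}}{(s+2)!},
\end{equation*}
comparison with \e{pqe} gives $\mu=2$, $p_0=1/2$ and $p_s=1/(s+2)!$, so that $\theta_\ell=\pi\ell$ exactly as in \e{mu2}. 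From $q(z)=e^{vz}=\sum_s (v^s/s!)z^s$ I read off $q_s=v^s/s!$.

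Next I would treat the two integrals separately, verifying hypothesis \e{c1}. On the real line $\tilde p$ is real with $\tilde p'(z)=1-e^z$, so $\tilde p$ is strictly increasing on $[-2,0]$ and strictly decreasing on $[0,2]$; in both cases $\Re(\tilde p)$ attains its strict maximum at $z_0=0$, as Theorem \ref{il} requires. For \e{in2cc} I take $\cc$ to be the interval from $0$ to $2$; its initial part moves into the sector with bisecting angle $\theta_0=0$, so $k=0$ in \e{wim}, the factor $e^{2\pi i k(s+1)/\mu}$ equals $1$, and I obtain $\tilde\beta_s(v)=\alpha_s$ directly. For \e{incc} I apply Theorem \ref{il} to the reversed contour from $0$ to $-2$, whose initial part lies in the sector with bisecting angle $\theta_1=\pi$, so $k=1$ and the factor is $e^{\pi i(s+1)}=(-1)^{s+1}$; reversing orientation back to $\int_{-2}^0$ supplies the remaining sign and produces the stated $(-1)^s\tilde\beta_s(v)$.

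It remains to evaluate $\alpha_s$ from Proposition \ref{wojf} with $a=1$, $\mu=2$, $p_0=1/2$, where the binomial $\binom{-(s+1)/2}{k}$ carries through unchanged. The one substantive step is the rescaling of the De Moivre arguments: here $p_i/p_0 = 2/(i+2)!$, so by \e{mulk}
\begin{equation*}
  \dm_{m,k}\left(\frac{p_1}{p_0},\frac{p_2}{p_0},\dots\right) = \dm_{m,k}\left(\frac{2}{3!},\frac{2}{4!},\dots\right) = 2^k\,\dm_{m,k}\left(\frac{1}{3!},\frac{1}{4!},\frac{1}{5!},\dots\right).
\end{equation*}
Substituting $q_{s-m}=v^{s-m}/(s-m)!$ and collecting powers of $2$, the prefactor $\tfrac12 (1/2)^{-(s+1)/2}2^k = 2^{(s-1)/2+k}$ emerges, yielding precisely \e{in3cc}. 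The main obstacle, such as it is, is purely bookkeeping: correctly fixing the orientation and the value of $k$ for the leftward contour, and tracking the factor $2^k$ generated when the De Moivre arguments $2/(j+2)!$ are rescaled to the $1/j!$-type arguments $1/3!,1/4!,\dots$. No analytic input beyond Theorem \ref{il} and Proposition \ref{wojf} is required.
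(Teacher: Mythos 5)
Your proposal is correct and follows essentially the same route as the paper: the paper's proof likewise reads off $p_s=1/(s+2)!$, $p_0=1/2$, $\mu=2$, $q_s=v^s/s!$ from the Taylor expansion of $\tilde p$ at $z_0=0$ and then applies Theorem \ref{il} and Proposition \ref{wojf} exactly as in Proposition \ref{pil}, with the same choices $k=0$ and $k=1$ for the rightward and leftward contours. Your explicit tracking of the factor $2^k$ from rescaling the De Moivre arguments and of the orientation sign matches the paper's (more tersely stated) computation.
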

\begin{proof}
The function $ \tilde p(z)$ is entire. Let $z_0=0$ with $ \tilde p(0)=0$ and we have the Taylor expansion about $0$:
\begin{equation*}
   \tilde p(z)- \tilde p(0)=-\sum_{j=2}^\infty \frac{z^{j}}{j!}.
\end{equation*}
According to our setup with Assumptions \ref{ma0} and \e{pqe}, $p(z)= \tilde p(z)$, $q(z)=e^{v z}$ and
$$
p_s=1/(s+2)!,  \quad p_0=1/2, \quad \mu=2, \quad \theta_\ell=\pi \ell,  \quad q_s= v^s/s!.
$$
The proof continues by applying Theorem \ref{il} and Proposition \ref{wojf}, similarly to the proof of Proposition \ref{pil}.
\end{proof}

Define
\begin{align} \label{rhk}
   \tilde \rho_r(v) & := -\sum_{m=0}^{2r+1} \frac{v^{2r+1-m}}{(2r+1-m)!}\sum_{k=0}^{m}  \frac{(2r+2k)!!}{(-1)^k k!} \dm_{m,k}\left( \frac{1}{3!}, \frac{1}{4!}, \frac{1}{5!}, \dots \right), \\
  \tilde \g_r(v) & := \sum_{m=0}^{2r} \frac{v^{2r-m}}{(2r-m)!}\sum_{k=0}^{m}  \frac{(2r+2k-1)!!}{(-1)^k k!} \dm_{m,k}\left( \frac{1}{3!}, \frac{1}{4!}, \frac{1}{5!}, \dots \right). \label{gmk}
\end{align}

\begin{prop} \label{v0}
The asymptotic expansion coefficients $\rho_r(v)$  of $\theta_n(v)$ in \e{abb},  and $\g_r(v)$ of $\G(n+v+1)$ in \e{gnv},  satisfy $\rho_0(v)=\tilde\rho_0(v)$, $\g_0(v) = \tilde \g_0(v)$ and
\begin{equation*}
  \rho_r(v) = \tilde \rho_r(v) + v \cdot \tilde \rho_{r-1}(v), \qquad
  \g_r(v) = \tilde \g_r(v) + v \cdot \tilde \g_{r-1}(v) \qquad (r\gqs 1).
\end{equation*}
\end{prop}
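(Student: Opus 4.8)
The plan is to obtain the large-$n$ asymptotics of $S_n(1;v)$ in two different ways and match coefficients. On one side, Theorem~\ref{msth} already records in \e{s1} that
\[
  S_n(1;v) = \sum_{r=0}^{R-1}\frac{\rho_r(v)}{n^r} + \frac{\sqrt{2\pi}}{2}\sum_{r=0}^{R-1}\frac{\g_r(v)}{n^{r-1/2}} + O\!\left(\frac{1}{n^{R-1/2}}\right),
\]
so that $\rho_r(v)$ is the coefficient of the integer power $n^{-r}$ and $\tfrac{\sqrt{2\pi}}2\g_r(v)$ the coefficient of the half-integer power $n^{1/2-r}$. On the other side I would feed Proposition~\ref{pilcc} into Lemma~\ref{par} with $w=1$ and $\Re(w)\lqs 1$: writing $c_s:=(-1)^s\G\!\left(\tfrac{s+1}2\right)$, this gives
\[
  S_n(1;v) = (n+v)\sum_{s=0}^{S-1} \frac{c_s\,\tilde\beta_s(v)}{n^{(s+1)/2}} + O\!\left(e^{-n/2}\right).
\]
The essential point is the extra factor $n+v$ out front, which arose from the integration by parts in \e{here}; it is exactly this factor that will manufacture the shift terms $v\cdot\tilde\rho_{r-1}(v)$ and $v\cdot\tilde\g_{r-1}(v)$.

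The bookkeeping step is to split $(n+v)c_s\tilde\beta_s(v)n^{-(s+1)/2} = c_s\tilde\beta_s(v)n^{(1-s)/2} + v\,c_s\tilde\beta_s(v)n^{-(s+1)/2}$ and sort by the parity of $s$: odd $s=2r+1$ contributes to integer powers of $n$, even $s=2r$ to half-integer powers. After reindexing the $v$-pieces (whose exponents are shifted by two), the coefficient of $n^{-r}$ becomes $c_{2r+1}\tilde\beta_{2r+1}(v) + v\,c_{2r-1}\tilde\beta_{2r-1}(v)$ and the coefficient of $n^{1/2-r}$ becomes $c_{2r}\tilde\beta_{2r}(v) + v\,c_{2r-2}\tilde\beta_{2r-2}(v)$, with the missing $r=0$ neighbors understood as zero.

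It then remains to identify $c_{2r+1}\tilde\beta_{2r+1}(v)$ with $\tilde\rho_r(v)$ and $c_{2r}\tilde\beta_{2r}(v)$ with $\tfrac{\sqrt{2\pi}}2\tilde\g_r(v)$, after which matching the two expansions term-by-term (integer and half-integer powers separately, by uniqueness of the asymptotic expansion) yields $\rho_r(v)=\tilde\rho_r(v)+v\,\tilde\rho_{r-1}(v)$ and $\g_r(v)=\tilde\g_r(v)+v\,\tilde\g_{r-1}(v)$ for $r\gqs1$, the $r=0$ equalities dropping out of the top-order terms. These two identifications are where the real, if routine, work lies, and I regard them as the main obstacle. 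Setting $s=2r+1$ in \e{in3cc}, using $c_{2r+1}=-r!$, $(2r+2k)!!=2^{r+k}(r+k)!$ and $\binom{-(r+1)}{k}=(-1)^k\binom{r+k}{k}$, reduces $2^{r+k}\binom{-(r+1)}{k}$ to $\tfrac1{r!}\cdot\tfrac{(2r+2k)!!}{(-1)^k k!}$, so that \e{in3cc} becomes $-\tfrac1{r!}$ times \e{rhk}; setting $s=2r$ and invoking the identity \e{idp} turns $\G(r+1/2)\,\tilde\beta_{2r}(v)$ into $\sqrt{\pi/2}=\tfrac{\sqrt{2\pi}}2$ times \e{gmk}. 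Both computations run exactly parallel to the way $\rho_r(v)$ and $\g_r(v)$ were read off from $\beta_s(v)$ in the proofs of Theorem~\ref{ext} and Proposition~\ref{xpg}, so beyond the $n+v$ mechanism no genuinely new idea is needed.
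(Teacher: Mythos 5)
Your proposal is correct and follows essentially the same route as the paper: both rest on the $(n+v)$ prefactor produced by the substitution $z\to e^z$ in \e{here}, on Lemma \ref{par} and Proposition \ref{pilcc}, and on the same identifications $-r!\,\tilde\beta_{2r+1}(v)=\tilde\rho_r(v)$ and $\G\left(r+\tfrac12\right)\tilde\beta_{2r}(v)=\tfrac{\sqrt{2\pi}}{2}\,\tilde\g_r(v)$. The only cosmetic difference is that the paper compares against the expansions of $\theta_n(v)=\tfrac12\left(S_n-T_n\right)$ and of $\G(n+v+1)$ via $S_n+T_n$, so the factors $(-1)^s\mp 1$ isolate the odd and even $s$ automatically, whereas you expand $S_n(1;v)$ alone against \e{s1} and separate integer from half-integer powers by uniqueness of the asymptotic expansion.
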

\begin{proof}
Start with  \e{bx} and use Lemma \ref{par}, Proposition \ref{pilcc} to show that
\begin{align*}
  \theta_n(v) & = \frac{n+v}2 \left(\sum_{s=0}^{S-1} \G\left( \frac{s+1}2\right) \frac{ \tilde\beta_s(v)}{n^{(s+1)/2}}((-1)^s -1) + O\left(\frac 1{n^{(S+1)/2}} \right)\right)\\
   & =  -\left(1+\frac{v}n\right) \left(\sum_{r=0}^{R-1} \G\left( r+1\right) \frac{ \tilde\beta_{2r+1}(v)}{n^{r}} + O\left(\frac 1{n^{R}} \right)\right).
\end{align*}
A calculation finds $-\G\left( r+1\right) \tilde\beta_{2r+1}(v) = \tilde \rho_r(v)$ and we obtain the desired relations for $\tilde \rho_r(v)$.
The results for $\tilde \g_r(v)$ are shown similarly, starting with \e{bx2}.
\end{proof}

Set
\begin{equation} \label{urt}
   \tilde U_r(w;v)  := -\sum_{m=0}^{r} \frac{v^{r-m}}{(r-m)!}\sum_{k=0}^{m}   \frac{(-w)^k}{(w-1)^{r+k+1}}
   \frac{(r+k)!}{ k!}
  \dm_{m,k}\left( \frac{1}{2!}, \frac{1}{3!}, \frac{1}{4!}, \dots \right).
\end{equation}

\begin{prop} \label{urt2}
We have $U_0(w;v) = \tilde U_0(w;v)$ and
\begin{equation*}
  U_r(w;v) = \tilde U_r(w;v) + v \cdot \tilde U_{r-1}(w;v) \qquad (r\gqs 1).
\end{equation*}
\end{prop}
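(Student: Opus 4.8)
The plan is to reprove the asymptotic expansion of $S_n(w;v)$ from the new integral representation in Lemma \ref{par}, mirroring the proof of Proposition \ref{abc} but with the entire function $\tilde p(z)=w(1-e^z)+z$ replacing $p(z)=w(1-z)+\log z$. For $\Re(w)\lqs 1$ and $w\neq 1$ the real part of $\tilde p$ attains its maximum on $[-2,0]$ at the endpoint $z_0=0$, and since $\tilde p'(0)=1-w\neq 0$ there is no saddle point, so $\mu=1$ and Theorem \ref{il} applies directly. From $\tilde p(z)-\tilde p(0)=(1-w)z-w\sum_{j\gqs 2}z^j/j!$ and $q(z)=e^{vz}$ I read off, in the notation of \e{pqe}, the data $p_0=w-1$, $p_s=w/(s+1)!$ for $s\gqs 1$, and $q_s=v^s/s!$.

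Feeding this into Proposition \ref{wojf} with $a=1$ and $\mu=1$, I would use the homogeneity relation \e{mulk} to pull the factor $(w/(w-1))^k$ out of each term via $\dm_{m,k}(p_1/p_0,p_2/p_0,\dots)=(w/(w-1))^k\dm_{m,k}(1/2!,1/3!,\dots)$, and combine it with $\binom{-(s+1)}{k}=(-1)^k(s+k)!/(s!\,k!)$. After multiplying by $s!$, the various powers of $w-1$ collapse into $w^k/(w-1)^{s+k+1}$ and the signs into $(-w)^k$, which I expect to give precisely $s!\,\alpha_s=-\tilde U_s(w;v)$ with $\tilde U_s$ as in \e{urt}.

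The contour for Theorem \ref{il} runs from the maximum $z_0=0$ to $-2$, while Lemma \ref{par} carries $\int_{-2}^0$, so a sign appears and $S_n(w;v)=-(n+v)\sum_s s!\,\alpha_s/n^{s+1}+O(\,\cdot\,)$. Writing $(n+v)/n^{s+1}=1/n^s+v/n^{s+1}$ splits this into two series whose combined coefficient of $1/n^r$ is $-r!\,\alpha_r-v(r-1)!\,\alpha_{r-1}=\tilde U_r(w;v)+v\,\tilde U_{r-1}(w;v)$. Comparing with $S_n(w;v)=\sum_r U_r(w;v)/n^r+O(1/n^R)$ from Proposition \ref{abc} gives the identity for $\Re(w)\lqs 1$, $w\neq 1$; the base case is the check $\tilde U_0(w;v)=1/(1-w)=U_0(w;v)$. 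Since both sides are rational in $w$, agreement on the half-plane $\Re(w)<1$ forces the identity for all $w\neq 1$.

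The main obstacle I anticipate is purely the sign and constant bookkeeping: there are two independent sources of minus signs (the contour orientation, and the $(-1)^k$ in the binomial coefficient) that must combine into the single overall sign of \e{urt}, and the homogeneity step must be applied carefully so that the entire $w$-dependence collapses into the clean factor $w^k/(w-1)^{r+k+1}$. The structural heart is the $(n+v)$ prefactor, whose index shift is exactly what manufactures the $v\cdot\tilde U_{r-1}$ term, in complete parallel with the mechanism already used in Proposition \ref{v0}.
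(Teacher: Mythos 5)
Your proposal is correct and follows exactly the route of the paper's proof: apply Theorem \ref{il} and Proposition \ref{wojf} to the representation \e{par2} with $\tilde p(z)=w(1-e^z)+z$, $z_0=0$, $\mu=1$, $p_0=w-1$, $p_s=w/(s+1)!$, $q_s=v^s/s!$, and compare the resulting series with Proposition \ref{abc}. The paper states this only in outline, and your bookkeeping of the signs, the homogeneity step via \e{mulk}, the $(n+v)$ index shift producing the $v\cdot\tilde U_{r-1}$ term, and the rational-function continuation beyond $\Re(w)\lqs 1$ all check out.
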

\begin{proof}
We will use \e{par2} to give an asymptotic expansion of $S_n(w;v)$ for $\Re(w)\lqs 1$ and $w\neq 1$. The function $\tilde p(z)=w(1-e^z)+  z$ is entire. Let $z_0=0$ with $ \tilde p(0)=0$ and we have the Taylor expansion:
\begin{equation*}
   \tilde p(z)- \tilde p(0)=-(w-1)z - w\sum_{j=2}^\infty \frac{z^{j}}{j!}.
\end{equation*}
In the notation of Assumptions \ref{ma0} and \e{pqe}, $p(z)= \tilde p(z)$, $q(z)=e^{v z}$ and
$$
p_0=w-1, \quad \mu=1,  \quad p_s=w/(s+1)! \ \text{ for } \ s\gqs 1,  \quad q_s= v^s/s!.
$$
The proof continues by applying Theorem \ref{il} and Proposition \ref{wojf}, and comparing the resulting series with Proposition \ref{abc}.
\end{proof}

\section{Further formulas when $v=0$}
In this section we set $v=0$ and then omit $v$ from the notation. The formulas \e{rhrv}, and \e{rhk} with Proposition \ref{v0}, simplify to give Ramanujan's coefficients in \e{rb2} as
\begin{align} \label{rfb}
   \rho_r   = \delta_{r,0} +{} & \sum_{k=0}^{2r+1}  \frac{(2r+2k)!!}{(-1)^k k!} \dm_{2r+1,k}\left( \frac{1}{3}, \frac{1}{4}, \frac{1}{5}, \dots \right) \\
    = - & \sum_{k=0}^{2r+1}  \frac{(2r+2k)!!}{(-1)^k k!} \dm_{2r+1,k}\left( \frac{1}{3!}, \frac{1}{4!}, \frac{1}{5!}, \dots \right). \label{rfb2}
\end{align}
The similar expressions for $\g_r$ in Stirling's approximation \e{gmx} have already been noted in \e{gmy}, \e{gmy2}.
Also with \e{wb}, \e{urt} and Proposition \ref{urt2}, Buckholtz's functions from \cite{Bu63} can be written as
\begin{align}
   U_r(w)   = \delta_{r,0} +{} &\sum_{k=0}^{r} \frac{ w}{(1-w)^{r+k+1}}  \cdot \frac{(r+k)!}{(-1)^{k} k!}  \cdot  \dm_{r,k}\left( \frac{1}{2}, \frac{1}{3}, \frac{1}{4},  \dots \right) \label{uj}\\
   =  & \sum_{k=0}^{r} \frac{w^k}{(1-w)^{r+k+1}} \cdot \frac{(r+k)!}{(-1)^{r} k!}
    \cdot \dm_{r,k}\left( \frac{1}{2!}, \frac{1}{3!}, \frac{1}{4!}, \dots \right). \label{uj2}
\end{align}

The De Moivre polynomials above can be expressed in terms of the more familiar Stirling numbers. Recall that the Stirling subset numbers $\stirb{n}{k}$  count the number of ways to partition  $n$ elements into $k$ nonempty subsets. The Stirling cycle numbers $\stira{n}{k}$ count the number of ways to arrange $n$ elements into $k$ cycles. Their properties are developed in \cite[Sect. 6.1]{Knu}. Also described in \cite[Sect. 6.2]{Knu} and \cite{GS78} are the second-order Eulerian numbers $\eud{n}{k}$ and  the relations
\begin{align}\label{eu2a}
  \stira{r+j}{j} & = \sum_{k=0}^r \eu{r}{k} \binom{r+j+k}{2r}, \\
  \stirb{r+j}{j} & = \sum_{k=0}^r \eu{r}{k} \binom{2r+j-1-k}{2r}. \label{eu2b}
\end{align}
As shown in \cite[Sect. 2]{odm}, for example,
\begin{equation} \label{ank}
  \dm_{n,k}\left( \frac{1}{1}, \frac{1}{2}, \frac{1}{3},  \dots \right)  =
  \frac{k!}{n!}\stira{n}{k}, \qquad
  \dm_{n,k}\left( \frac{1}{1!}, \frac{1}{2!}, \frac{1}{3!},  \dots \right)  = \frac{k!}{n!}\stirb{n}{k}.
\end{equation}
Removing the first coefficient $a_1$ in $\dm_{n,k}(a_1,a_2, \dots)$ is easily achieved by the binomial theorem:
\begin{equation}\label{add}
  \dm_{n,k}(a_2,a_3, \dots)  = \sum_{j=0}^k (-a_1)^{k-j} \binom{k}{j} \dm_{n+j,j}(a_1,a_2, \dots).
\end{equation}
Then it follows from \e{ank} and \e{add} that
\begin{align}
\dm_{r,k}\left(  \frac{1}{2}, \frac{1}{3}, \frac{1}{4}, \dots \right)
  & =\frac{k!}{(r+k)!} \sum_{j=0}^k (-1)^{k-j}  \binom{r+k}{r+j} \stira{r+j}{j},\label{jfa}\\
  \dm_{r,k}\left(  \frac{1}{2!}, \frac{1}{3!}, \frac{1}{4!}, \dots \right)
  & =\frac{k!}{(r+k)!} \sum_{j=0}^k (-1)^{k-j}  \binom{r+k}{r+j} \stirb{r+j}{j}.\label{jfb}
\end{align}

\begin{prop} \label{rop}
For $w\in \C$,
\begin{alignat}{2}
  U_r(w) & = \delta_{r,0} +\frac{(-1)^r w}{(1-w)^{2r+1}} \sum_{j=0}^r \eu{r}{j} w^{j} \qquad & &(w\neq 1), \label{kn}\\
   & = \delta_{r,0} +(-1)^r\sum_{j=1}^\infty \stirb{r+j}{j} w^j\qquad & &(|w|<1). \label{car}
\end{alignat}
\end{prop}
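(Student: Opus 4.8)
The plan is to observe that \e{kn} and \e{car} are equivalent to one another, so that only one of them need be proved from the definition while the other follows by a generating-function translation. The bridge is the identity
\begin{equation*}
  \sum_{j\gqs 1}\stirb{r+j}{j}w^j = \frac{w}{(1-w)^{2r+1}}\sum_{k=0}^r \eu{r}{k}w^k \qquad (r\gqs 1,\ |w|<1),
\end{equation*}
which I would read off directly from \e{eu2b}: substitute it into the left-hand side, interchange the two sums, and evaluate each inner series $\sum_{j\gqs 0}\binom{2r-1-k+j}{2r}w^j = w^{k+1}/(1-w)^{2r+1}$ by the negative binomial theorem. Multiplying by $(-1)^r$ and restoring the $\delta_{r,0}$ term, this shows that the rational function in \e{kn} and the power series in \e{car} describe the same object, so it suffices to establish either formula.

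I would establish \e{car} starting from the second expression \e{uj2}, since it already carries the De Moivre values $\dm_{r,k}(1/2!,1/3!,\dots)$ that \e{jfb} converts into the subset numbers appearing on the right. Writing $w^k/(1-w)^{r+k+1}=\sum_{m\gqs 0}\binom{m+r+k}{r+k}w^{m+k}$ and extracting the coefficient of $w^n$, the claim reduces, for $n\gqs 1$, to the purely combinatorial identity
\begin{equation*}
  \sum_{k=0}^r \binom{n+r}{r+k}\sum_{j=0}^k (-1)^{k-j}\binom{r+k}{r+j}\stirb{r+j}{j} = \stirb{r+n}{n},
\end{equation*}
while the $n=0$ case is immediate because only $k=0$ survives and $\dm_{r,0}=\delta_{r,0}$. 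Interchanging the order of summation and collapsing the two binomials by the subset-of-a-subset rule $\binom{n+r}{r+k}\binom{r+k}{r+j}=\binom{n+r}{r+j}\binom{n-j}{k-j}$, the inner sum over $k$ becomes an alternating partial sum evaluated by $\sum_{i=0}^{M}(-1)^i\binom{N}{i}=(-1)^M\binom{N-1}{M}$, leaving
\begin{equation*}
  \sum_{j=0}^r (-1)^{r-j}\binom{n+r}{r+j}\binom{n-1-j}{r-j}\stirb{r+j}{j}=\stirb{r+n}{n}.
\end{equation*}

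The main obstacle is precisely this last identity: after the two standard binomial reductions one must still show the single remaining sum telescopes to $\stirb{r+n}{n}$, which I would settle either by induction on $r$ using $\stirb{r+n}{n}=n\stirb{r+n-1}{n}+\stirb{r+n-1}{n-1}$ or by recognizing it as a standard Stirling-subset-number summation identity; the sign bookkeeping that turns the $1/(-1)^k$ of \e{uj2} into the global $(-1)^r$ must also be tracked with care. An alternative route, which avoids this identity at the cost of invoking the Eulerian machinery twice, is to prove \e{kn} first: substitute \e{jfa} and then \e{eu2a} into \e{uj}, pull the second-order Eulerian number $\eu{r}{\ell}$ outside the sums, and check that the inner double sum over $k$ and $j$ collapses to $(-1)^r w^\ell/(1-w)^{2r+1}$; formula \e{car} then drops out of \e{kn} via the generating-function identity above.
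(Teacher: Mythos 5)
Your bridge identity $\sum_{j\gqs 1}\stirb{r+j}{j}w^j = w(1-w)^{-2r-1}\sum_{k=0}^r\eu{r}{k}w^k$ is correct and is exactly how the paper passes from \e{kn} to \e{car} (expand $(1-w)^{-2r-1}$ by the binomial theorem and apply \e{eu2b}), so you are right that it suffices to establish either formula. The difficulty is with the route you designate as primary. Your reduction of \e{car} to
\begin{equation*}
  \sum_{j=0}^r (-1)^{r-j}\binom{n+r}{r+j}\binom{n-1-j}{r-j}\stirb{r+j}{j}=\stirb{r+n}{n}
\end{equation*}
is carried out correctly (the coefficient extraction from \e{uj2} via \e{jfb}, the subset-of-a-subset collapse, and the alternating partial-sum evaluation all check out, and the identity is numerically true), but this identity \emph{is} the entire content of \e{car}, and you have not proved it. It is not an interpolation statement --- the sum has only $r+1$ terms while $\stirb{r+j}{j}$ is a polynomial of degree $2r$ in $j$ --- and it is not obviously in the standard repertoire; the suggested induction on $r$ via $\stirb{r+n}{n}=n\stirb{r+n-1}{n}+\stirb{r+n-1}{n-1}$ does not mesh in any evident way with the shape of the left-hand side. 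As written, the main route leaves its crux unestablished.

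Your "alternative" route is essentially the paper's proof, and you should promote it. The paper first inserts \e{eu2a} into \e{jfa} (resp.\ \e{eu2b} into \e{jfb}) and contracts the resulting binomial sum by Vandermonde to get the clean intermediate formulas \e{yta}--\e{ytb}, for instance $\dm_{r,k}\left(\frac12,\frac13,\dots\right)=\frac{k!}{(r+k)!}\sum_{j}\eu{r}{j}\binom{j}{r-k}$. Substituting this into \e{uj} and putting $i=r-k$, the inner sum becomes $\frac{(-1)^r w}{(1-w)^{2r+1}}\sum_i\binom{j}{i}\left(-(1-w)\right)^i=\frac{(-1)^r w^{j+1}}{(1-w)^{2r+1}}$ (note the exponent is $j+1$, not $j$: the extra factor of $w$ comes from the prefactor in \e{uj}), which gives \e{kn} at once; \e{car} then follows by your bridge identity. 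Restructured this way --- with the two-step substitution done in the order the paper uses, or all at once as you sketch --- the argument is complete; the direct attack on \e{car} should either be dropped or supplemented with an honest proof of the displayed Stirling identity.
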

\begin{proof}
Inserting \e{eu2a} and \e{eu2b} into \e{jfa} and  \e{jfb}, respectively, and simplifying the binomial sum as in \cite[Eq. (5.24)]{Knu} reveals that
\begin{align}
\dm_{r,k}\left(  \frac{1}{2}, \frac{1}{3}, \frac{1}{4}, \dots \right)
  & =\frac{k!}{(r+k)!} \sum_{j=0}^r   \eu{r}{j} \binom{j}{r-k},\label{yta}\\
  \dm_{r,k}\left(  \frac{1}{2!}, \frac{1}{3!}, \frac{1}{4!}, \dots \right)
  & =\frac{k!}{(r+k)!} \sum_{j=0}^r   \eu{r}{j} \binom{r-1-j}{r-k}.\label{ytb}
\end{align}
Then substituting \e{yta} into \e{uj}, (or \e{ytb} into \e{uj2}), interchanging summations and simplifying yields \e{kn}. Expanding $(1-w)^{-2r-1}$ in \e{kn} with the binomial theorem and using \e{eu2b} then shows \e{car}.
\end{proof}

The identity \e{kn} is due to Knuth  and described in \cite[p. 506]{Knuprog}  where the functions $$Q_w(n):=T_n(1/w;0)/w, \qquad
R_w(n):=S_n(w;0)
$$
are studied. Also \e{car} is due to Carlitz in \cite{Ca65}. Proposition \ref{rop} gives new proofs of these  identities. 

Comparing coefficients of $w$ in \e{uj}, \e{uj2} and \e{kn} finds
\begin{alignat}{2}\label{uk}
  \eu{r}{j} & = \sum_{k=0}^{r} (-1)^{r+j+k} \frac{(r+k)!}{k!}\binom{r-k}{j} \dm_{r,k}\left( \frac{1}{2}, \frac{1}{3}, \frac{1}{4},  \dots \right) \qquad &(r &\gqs 0),\\
  & = \sum_{k=0}^{r} (-1)^{j+k+1} \frac{(r+k)!}{k!}\binom{r-k}{j+1-k} \dm_{r,k}\left( \frac{1}{2!}, \frac{1}{3!}, \frac{1}{4!},  \dots \right) \qquad &(r & \gqs 1). \label{uk2}
\end{alignat}
Combining \e{add} with \e{jfa}, \e{jfb}, \e{yta} or \e{ytb} also gives explicit formulas for
$\dm_{r,k}\left( \frac{1}{3}, \frac{1}{4}, \frac{1}{5}, \dots \right)$ and $\dm_{r,k}\left( \frac{1}{3!}, \frac{1}{4!}, \frac{1}{5!}, \dots \right)$. For example,
\begin{equation}\label{wew}
  \dm_{r,k}\left( \frac{1}{3}, \frac{1}{4}, \frac{1}{5}, \dots \right)
  = \sum_{j_1+j_2+j_3=k} \frac{(-1)^{j_1+j_2}}{2^{j_1}} \frac{k!}{j_1! j_2! (r+j_2+2j_3)!} \stira{r+j_2+2j_3}{j_3},
\end{equation}
where the sum is over all $j_1$, $j_2$, $j_3 \in \Z_{\gqs 0}$ that sum to $k$.

Applying \e{add} $r$ times leads to the following result.

\begin{prop}
For $r, k \gqs 0$,
$\dm_{n,k}(a_{r+1},a_{r+2}, \dots)$ equals
\begin{equation}\label{fortx}
   \sum_{ j_1+ j_2+ \dots +j_{r+1}= k}
 \binom{k}{j_1 , j_2 ,  \dots , j_{r+1}} (-a_1)^{j_1} (-a_2)^{j_2}  \cdots (-a_r)^{j_r}
 \dm_{n+J+r j_{r+1},j_{r+1}}(a_{1},a_{2}, \dots),
\end{equation}
where $J$ means $(r-1)j_1+(r-2)j_2+ \cdots +1 j_{r-1}$ and the summation is   over all  $j_1$,  \dots , $j_{r+1} \in \Z_{\gqs 0}$ with sum $k$.
\end{prop}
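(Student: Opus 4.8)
The plan is to prove \e{fortx} directly from the defining generating function \e{bell}, which packages the $r$-fold application of \e{add} into a single multinomial expansion. Write $f(x) := a_1 x + a_2 x^2 + a_3 x^3 + \cdots$, so that by definition $f(x)^j = \sum_m \dm_{m,j}(a_1,a_2,\dots)\, x^m$. The quantity $\dm_{n,k}(a_{r+1},a_{r+2},\dots)$ is the coefficient of $x^n$ in $(a_{r+1}x + a_{r+2}x^2 + \cdots)^k$, and the starting observation is the identity
\begin{equation*}
  a_{r+1}x + a_{r+2}x^2 + \cdots = \frac{1}{x^r}\left( f(x) - \sum_{i=1}^r a_i x^i \right),
\end{equation*}
so that $(a_{r+1}x + a_{r+2}x^2 + \cdots)^k = x^{-rk}\bigl( f(x) - \sum_{i=1}^r a_i x^i \bigr)^k$.

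Next I would expand the $k$-th power by the multinomial theorem, treating the $r+1$ summands $-a_1 x,\, -a_2 x^2,\, \dots,\, -a_r x^r$ and $f(x)$ as the terms being raised, with exponents $j_1,\dots,j_r$ and $j_{r+1}$ summing to $k$. This gives
\begin{equation*}
  \Bigl( f(x) - \sum_{i=1}^r a_i x^i \Bigr)^k = \sum_{j_1+\cdots+j_{r+1}=k} \binom{k}{j_1,\dots,j_{r+1}} (-a_1)^{j_1}\cdots(-a_r)^{j_r}\, x^{j_1+2j_2+\cdots+rj_r}\, f(x)^{j_{r+1}}.
\end{equation*}
Substituting the series for $f(x)^{j_{r+1}}$ and then reading off the coefficient of $x^n$ in $x^{-rk}(\cdots)$ --- equivalently the coefficient of $x^{n+rk}$ in $(\cdots)$ --- forces the index of the inner De Moivre polynomial to be $m = n + rk - (j_1 + 2j_2 + \cdots + rj_r)$.

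The only real work is the index bookkeeping. Using the constraint $j_1+\cdots+j_{r+1}=k$ to write $rk = \sum_{i=1}^{r+1} r j_i$, I would simplify
\begin{equation*}
  rk - (j_1 + 2j_2 + \cdots + rj_r) = \sum_{i=1}^{r}(r-i)j_i + r j_{r+1} = \bigl[(r-1)j_1 + (r-2)j_2 + \cdots + 1\cdot j_{r-1}\bigr] + r j_{r+1},
\end{equation*}
where the bracketed expression is precisely the quantity $J$ in the statement (the $j_r$ term drops out because its coefficient $r-r$ vanishes). Hence $m = n + J + r j_{r+1}$, and collecting everything yields \e{fortx} exactly.

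The main obstacle --- if one can call it that --- is purely this exponent arithmetic; there is no analytic content. An equivalent route is a straightforward induction on $r$ that peels off one coefficient at a time via \e{add}, with base case $r=1$ recovering \e{add} itself (take $j=j_2$ there) and the inductive step re-indexing the nested sums; the generating-function computation above is really just the clean way to verify that the accumulated multinomial coefficients and shifted indices collapse to the stated closed form.
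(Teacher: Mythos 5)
Your proof is correct, and the exponent bookkeeping checks out: with $m=n+rk-(j_1+2j_2+\cdots+rj_r)$ and $rk=\sum_{i=1}^{r+1}rj_i$ one indeed gets $m=n+J+rj_{r+1}$, matching \e{fortx}. The paper's own proof is the one-sentence remark that the result follows by applying \e{add} $r$ times, i.e.\ the iterative peeling you mention as your alternative route; your primary argument instead performs the removal of $a_1,\dots,a_r$ in a single multinomial expansion of $x^{-rk}\bigl(f(x)-\sum_{i=1}^r a_ix^i\bigr)^k$. Since \e{add} is itself just the binomial ($r=1$) case of this expansion, the two arguments are the same computation organized differently; your version has the advantage of producing the multinomial coefficient and the shifted index $n+J+rj_{r+1}$ in one step rather than having to verify that $r$ nested binomial sums collapse to them, which is exactly the bookkeeping the paper leaves implicit.
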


Using this, along with the equality
$
\dm_{m+k,k}\left( \frac{1}{0!}, \frac{1}{1!}, \frac{1}{2!}, \dots \right) =  k^m/m!
$,
proves the additional formulas
\begin{align}
  \dm_{n,k}\left( \frac{1}{2!}, \frac{1}{3!}, \frac{1}{4!}, \dots \right) \label{mvw}
& =\sum_{j_1+j_2+j_3=k} \binom{k}{j_1,j_2,j_3} (-1)^{j_1+j_2} \frac{j_3^{n+j_1+j_3}}{(n+j_1+j_3)!}, \\
  \dm_{n,k}\left( \frac{1}{3!}, \frac{1}{4!}, \frac{1}{5!}, \dots \right)
& =\sum_{j_1+j_2+j_3+j_4=k} \binom{k}{j_1,j_2,j_3,j_4} \frac{(-1)^{j_1+j_2+j_3}}{2^{j_3}} \frac{j_4^{n+2j_1+j_2+2j_4}}{(n+2j_1+j_2+2j_4)!}. \label{mvw2}
\end{align}

There is also a nice combinatorial interpretation of the De Moivre polynomial values appearing in this section. Let $n$, $k$ and $r$ be integers with $n$, $k\gqs 0$, $r\gqs 1$. Write $\stirb{n}{k}_{\gqs r}$ for the number of ways to partition  $n$ elements into $k$  subsets, each with at least $r$ members. Let $\stira{n}{k}_{\gqs r}$ denote the number of ways to arrange $n$ elements into $k$ cycles, each of length at least $r$. We also set $\stirb{0}{k}_{\gqs r}=\stirb{0}{k}_{\gqs r}=\delta_{k,0}$. These are the so-called $r$-associated Stirling numbers, generalizing the usual $r=1$ case; see for example \cite[pp. 222, 257]{Comtet} and \cite[Sect. 2]{BM11}.
We are following the Knuth-approved notation of \cite{knu95}.

\begin{prop}
Using sequences starting with $r-1$ zeros, we have
\begin{align}
  \stirb{n}{k}_{\!\gqs r} & =\frac{n!}{k!} \dm_{n,k}\left(0,0, \dots, 0, \frac 1{r!}, \frac 1{(r+1)!}, \dots \right) \label{rso}\\
  & =\frac{n!}{k!} \dm_{n-(r-1)k,k}\left(\frac 1{r!}, \frac 1{(r+1)!}, \dots \right),\label{rso2}\\
  \stira{n}{k}_{\!\gqs r} & =\frac{n!}{k!} \dm_{n,k}\left(0,0, \dots, 0, \frac 1{r}, \frac 1{r+1}, \dots \right) \label{rso3}\\
  & =\frac{n!}{k!} \dm_{n-(r-1)k,k}\left(\frac 1{r}, \frac 1{r+1}, \dots \right). \label{rso4}
\end{align}
\end{prop}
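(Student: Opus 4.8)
The plan is to prove the two identities for the $r$-associated Stirling numbers by combining the combinatorial definition of $r$-associated objects with the exponential-generating-function characterization built into the De Moivre polynomials via Definition \ref{dbf}. The key observation is that the $r$-associated Stirling subset numbers count set partitions into $k$ blocks each of size $\gqs r$, and their exponential generating function in the block-count variable has the shape $\left(\sum_{j\gqs r} x^j/j!\right)^k/k!$; similarly the $r$-associated cycle numbers use $\sum_{j\gqs r}x^j/j$ in place of $x^j/j!$. Recognizing these inner sums as exactly the polynomial arguments appearing on the right-hand sides of \eqref{rso}--\eqref{rso4} is what drives everything.

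First I would recall the standard exponential-generating-function identity for $r$-associated Stirling subset numbers. If $E_r(x) := \sum_{j\gqs r} x^j/j! = x^r/r! + x^{r+1}/(r+1)! + \cdots$, then by the exponential formula (a set partition is a set of nonempty blocks, here each constrained to size $\gqs r$), one has
\begin{equation*}
  \sum_{n\gqs 0} \stirb{n}{k}_{\!\gqs r}\frac{x^n}{n!} = \frac{E_r(x)^k}{k!}.
\end{equation*}
Next I would apply Definition \ref{dbf} directly: expanding $E_r(x)^k$ using \eqref{bell} with the coefficient sequence that starts with $r-1$ zeros (so that $a_j = 1/(r-1+j)!$ after the shift, or equivalently $a_1 = \cdots = a_{r-1} = 0$ and $a_r = 1/r!$, etc.), we get $E_r(x)^k = \sum_n \dm_{n,k}(0,\dots,0,\tfrac{1}{r!},\tfrac{1}{(r+1)!},\dots)\,x^n$. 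Matching the coefficient of $x^n$ on both sides yields
\begin{equation*}
  \frac{\stirb{n}{k}_{\!\gqs r}}{n!} = \frac{1}{k!}\dm_{n,k}\left(0,\dots,0,\tfrac{1}{r!},\tfrac{1}{(r+1)!},\dots\right),
\end{equation*}
which is exactly \eqref{rso}. The second form \eqref{rso2} then follows immediately from the zero-shifting relation \eqref{gsb}, applied $r-1$ times (or once in its iterated form), which converts $\dm_{n,k}(0,\dots,0,a_1,a_2,\dots)$ with $r-1$ leading zeros into $\dm_{n-(r-1)k,k}(a_1,a_2,\dots)$. The cycle statements \eqref{rso3} and \eqref{rso4} are proved identically, replacing the subset EGF $\sum x^j/j!$ by the cycle EGF $\sum x^j/j$ (cycles on $j$ points numbering $(j-1)! = j!/j$, hence generating function $x^j/j$), and again invoking \eqref{gsb} for the shifted form.

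The main obstacle is purely bookkeeping rather than conceptual: I must be careful that the leading-zeros convention in \eqref{rso} and \eqref{rso3} is matched exactly to the index shift in \eqref{gsb}, since \eqref{gsb} as stated inserts a single leading zero, so obtaining $r-1$ leading zeros requires iterating it and tracking how the first subscript decreases by $k$ at each step. I would also need to verify the boundary convention $\stirb{0}{k}_{\!\gqs r}=\stira{0}{k}_{\!\gqs r}=\delta_{k,0}$ is consistent with the De Moivre polynomial values at $n=0$, which it is since $\dm_{0,k}$ vanishes unless $k=0$. No deep estimate or saddle-point machinery is needed here; the proposition is an elementary generating-function identification, and the only genuine care required is aligning the zero-padding with the shift relation \eqref{gsb}.
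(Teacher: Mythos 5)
Your proof is correct and follows essentially the same route as the paper: both identify the De Moivre polynomial $\dm_{n,k}(0,\dots,0,\tfrac1{r!},\dots)$ with the count of partitions by block-size profile and then dispose of \eqref{rso2}, \eqref{rso4} via \eqref{gsb}. The only difference is presentational --- you cite the exponential formula for the generating function $E_r(x)^k/k!$ as known, whereas the paper proves that counting step directly by interpreting each multinomial summand of \eqref{bell2}.
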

\begin{proof}
Express the right hand side of \e{rso} using \e{bell2}. Each nonzero summand corresponds to a partition of $n$ elements into $j_r$ subsets of size $r$, $j_{r+1}$ subsets of size $r+1$, and so on, with $k$ subsets altogether.
Then
\begin{equation*}
  \frac{n!}{(j_r! j_{r+1}! \cdots )(r!)^{j_r} ((r+1)!)^{j_{r+1}} \cdots}
\end{equation*}
counts the $n!$ ways to put the $n$ elements into this partition, dividing by the $j_m!$ ways to order the subsets of size $m$ and dividing by the $m!$ ways to order the elements of each subset of size $m$. This gives the desired $r$-associated Stirling subset number. The argument for the $r$-associated Stirling cycle number in \e{rso3} is the same except that there are only $m$ ways to write a particular cycle of length $m$. The formulas \e{rso2}, \e{rso4} follow by \e{gsb}.
\end{proof}

The De Moivre polynomial values in \e{gmy} -- \e{gmy2} and \e{rfb} -- \e{uj2} can now be replaced by $r$-associated Stirling numbers. For example,
\begin{equation}\label{ass}
  \g_j  = \sum_{k=0}^{2j} \frac{(-1)^k}{(2j+2k)!!} \stira{2j+2k}{k}_{\!\gqs 3}
  , \qquad
  \g_j  = \sum_{k=0}^{2j} \frac{(-1)^k}{(2j+2k)!!} \stirb{2j+2k}{k}_{\!\gqs 3},
\end{equation}
where the first formula in \e{ass} is due to Comtet \cite[p. 267]{Comtet}, and the second  is due to Brassesco and M\'endez  \cite[Thm. 2.4]{BM11}.
Also
\begin{align}\label{ass2}
   \rho_j = \delta_{j,0}  +{} & \sum_{k=0}^{2j+1} \frac{(-1)^k}{(2j+2k+1)!!} \stira{2j+2k+1}{k}_{\!\gqs 3},\\
     = - & \sum_{k=0}^{2j+1} \frac{(-1)^k}{(2j+2k+1)!!} \stirb{2j+2k+1}{k}_{\!\gqs 3}. \label{ass3}
\end{align}

\section{Approximations to the exponential integral}

Ramanujan's next result after \e{rb} and \e{rb2} is  Entry 49, and it seems to have attracted much less attention than Entry 48. Use the relation
\begin{equation}\label{ei}
  1+\frac{1!}{n}+\frac{2!}{n^2}+ \cdots +\frac{(n-1)!}{n^{n-1}} +  \frac{n!}{n^n}\Psi_n = n e^{-n}\ei(n),
\end{equation}
to define $\Psi_n$, with $\ei(n)$ given in \e{ein}. Then Ramanujan  computed the first terms in the asymptotic expansion of $\Psi_n$, writing\footnote{He was considering $1+\Psi_n$ so his first term is $2/3$.}
\begin{equation}\label{ei2}
   \Psi_n  = -\frac{1}3+ \frac{4}{135 n}+\frac{8}{2835 n^2}+O\left( \frac{1}{n^3}\right).
\end{equation}
See Berndt's  discussion \cite[p. 184]{Ber89} of this entry, and a proof of \e{ei2} based on Olver's work in \cite[pp. 523 -- 531]{Olv}.
We are also interested in the generalization
\begin{equation}\label{ei3}
  1+\frac{1!}{n}+\frac{2!}{n^2}+ \cdots +\frac{(n+v-1)!}{n^{n+v-1}} +  \frac{(n+v)!}{n^{n+v}}\Psi_n(v) = n e^{-n}\ei(n),
\end{equation}
and our  goal is to establish the next result.

\begin{theorem} \label{xte}
Let $v$ be any integer. As real $n \to \infty$,
\begin{equation} \label{trw}
  \Psi_n(v) = \psi_0+\frac{\psi_1(v)}{n}+\frac{\psi_2(v)}{n^2}+ \cdots + \frac{\psi_{R-1}(v)}{n^{R-1}} +O\left(\frac{1}{n^{R}}\right),
\end{equation}
for an implied constant depending only on $R$ and $v$, with
$\psi_r(v)$ given explicitly in \e{xpo}.
\end{theorem}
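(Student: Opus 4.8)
The plan is to reduce \e{ei3} to a principal-value integral with the very same phase $p(z)=1-z+\log z$ used in Section~\ref{w=1}, and then to run Perron's method at the saddle $z=1$; the one genuinely new feature is that the amplitude now has a simple pole exactly at that saddle. First I would produce the integral representation. Writing $j!=\int_0^\infty e^{-u}u^j\,du$ and summing a finite geometric series gives $\sum_{j=0}^{n+v-1} j!/n^j = \int_0^\infty e^{-u}\,\frac{1-(u/n)^{n+v}}{1-u/n}\,du$, a genuinely convergent integral since the numerator also vanishes at $u=n$. The substitution $t=n-u$ in \e{ein} yields $n e^{-n}\ei(n)=\mathrm{PV}\!\int_0^\infty \frac{e^{-u}}{1-u/n}\,du$, with the principal value at $u=n$. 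Subtracting, the two singularities at $u=n$ combine into a single principal value, and after $u=nz$ together with the identity $e^{-nz}z^{n+v}=z^v e^{-n}e^{n p(z)}$ I obtain
\begin{equation*}
 \Psi_n(v)=\frac{n^{n+v+1}e^{-n}}{\G(n+v+1)}\,\mathrm{PV}\!\int_0^\infty \frac{z^v}{1-z}\,e^{n p(z)}\,dz .
\end{equation*}

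Next I would localize to the saddle, proving an analogue of Lemma~\ref{xm}. Since $e^{p(z)}=z\,e^{1-z}\lqs z^{1/10}$ for $0\lqs z\lqs 1/2$ and $e^{p(z)}\lqs e^{-z/20}$ for $z\gqs 3/2$, while the pole sits at the interior point $z=1$, the contributions of $[0,1/2]$ and $[3/2,\infty)$ are $\bo{e^{-cn}}$. Hence the integral equals $\mathrm{PV}\!\int_{1/2}^{3/2}\frac{z^v}{1-z}e^{n p(z)}\,dz$ up to an exponentially small error, so the pole is enclosed in the interior of a fixed finite interval on which $\Re p$ attains its strict maximum only at $z=1$.

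The heart of the matter is the saddle-point analysis of this principal value, where the amplitude $z^v/(1-z)=-z^v(z-1)^{-1}$ is no longer holomorphic at $z_0=1$. Splitting the interval at the pole, $\mathrm{PV}\!\int_{1/2}^{3/2}=\lim_{\varepsilon\to0}\bigl(\int_{1/2}^{1-\varepsilon}+\int_{1+\varepsilon}^{3/2}\bigr)$, each one-sided piece starts at $z_0=1$, where $p_0=1/2$ and $\mu=2$ as in \e{mu2}, and runs left ($k=1$) or right ($k=0$) exactly as in Proposition~\ref{pil}. I would then apply the extension of Proposition~\ref{wojf} to amplitudes $q(z)(z-z_0)^{a-1}$ that was promised there, here with $q(z)=-z^v$ and $a=0$. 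The decisive point is that the would-be term of order $n^{-(s+a)/\mu}$ at $s=0$ carries $\G(0)$, and the factors $e^{2\pi i k(s+a)/\mu}$ make its contributions from the two sides equal and opposite, so the divergence cancels in the principal value; equivalently, after $t=z-1$ and factoring $e^{-nt^2/2}$, the pole $1/t$ paired with the even part of the analytic remainder integrates to zero and only the odd part survives. Only terms of one parity therefore remain, and the localized integral has an expansion $\sum_{k\gqs 0} c_k(v)\,n^{-k-1/2}$ whose coefficients are explicit through the extended Proposition~\ref{wojf} and the De Moivre polynomials.

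Finally I would multiply by the prefactor, which by Proposition~\ref{xpg} equals $\frac{n^{n+v+1}e^{-n}}{\G(n+v+1)}=\sqrt{n/2\pi}\,\bigl(1+\g_1(v)/n+\cdots\bigr)^{-1}$; the factor $\sqrt{n}$ turns the half-integer powers $n^{-k-1/2}$ into integer powers $n^{-k}$, and the reciprocal Stirling series recombines the $c_k(v)$ into the coefficients $\psi_r(v)$ recorded in \e{xpo}. As a check, when $v=0$ the amplitude is the pure pole $1/(1-z)$, whose leading saddle contribution $-\tfrac13\sqrt{2\pi/n}$ times $\sqrt{n/2\pi}$ reproduces $\psi_0=-\tfrac13$ in \e{ei2}. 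I expect the main obstacle to be precisely this third step: Theorem~\ref{il} and Proposition~\ref{wojf} as stated assume a holomorphic integrand, so the real work is to justify the $a=0$ principal-value version, making rigorous the cancellation of the $\G(0)$ term between the two sides and the interchange of the limit $\varepsilon\to0$ with the asymptotic expansion.
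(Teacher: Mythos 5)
Your proposal is correct and follows essentially the same route as the paper: reduce $\Psi_n(v)$ to a Laplace-type integral with phase $p(z)=1-z+\log z$, discard $[0,1/2]$ and $[3/2,\infty)$ as exponentially small, run Perron's method at the saddle $z=1$ where the amplitude now carries a simple pole (the case $a=0$, $\mu=2$), observe that only odd powers of $(z-1)$ survive so the localized integral expands in half-integer powers of $n$, and finally divide by the Stirling series of Proposition \ref{xpg} to reach \e{xpo}. The differences are in packaging. You construct the integral representation from $j!=\int_0^\infty e^{-u}u^j\,du$ plus the substitution $t=n-u$ in \e{ein}, landing on a genuine principal value at $z=1$; the paper instead starts from Olver's formula \e{ele}, whose contour passes above the pole, so the pole shows up as a $-\pi i$ in \e{mc2} that cancels the $-\pi i$ in \e{mc} rather than as a PV cancellation. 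The two are interchangeable, since your principal value equals the indented integral plus $\pi i$ times the residue (which is $1$) of $e^{n\cdot p(z)}z^v/(z-1)$ at $z=1$. The obstacle you single out at the end --- rigorously justifying an $a=0$ principal-value analogue of Theorem \ref{il} and Proposition \ref{wojf}, including the cancellation of the $\G(0)$ term and the interchange of $\varepsilon\to 0$ with the expansion --- is precisely what the paper outsources to Theorem \ref{m6}, quoted from \cite{OSper}: taking $k_1=1$, $k_2=0$, the factors $e^{2\pi i k_j(s+a)/\mu}$ annihilate the even-$s$ terms, and the problematic $s=0$ term is handled by the stated replacement rule, yielding exactly the $-\pi i$; this is a rigorous form of your parity argument. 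So nothing in your plan would fail, but to close it you should either prove that PV lemma or, more economically, convert your representation to the indented contour and invoke Theorem \ref{m6} directly.
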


From \cite[p. 529]{Olv}, $\ei(n)$ may  be expressed with a contour integral whose path of integration runs along the positive reals while moving above $1$ to avoid the pole:
\begin{equation} \label{ele}
  \ei(n) = -\pi i +\int_0^\infty \frac{e^{n(1-z)}}{1-z} \, dz.
\end{equation}
Make the replacement
\begin{equation*}
  \frac 1{1-z} = 1+z +z^2 + \cdots z^{n+v-1} +  \frac{z^{n+v}}{1-z}
\end{equation*}
in \e{ele} to find
\begin{equation*}
  \ei(n) = -\pi i +e^n \sum_{j=0}^{n+v-1} \frac{j!}{n^{j+1}}- \int_0^\infty e^{n \cdot p(z)} \frac{z^v}{z-1} \, dz,
\end{equation*}
for $p(z)=1-z+\log z$. Hence
\begin{equation} \label{mc}
  \frac{e^n}{n^{n+v}} (n+v)! \Psi_n(v) =  -n\pi i - n\int_0^\infty e^{n  \cdot p(z)} \frac{z^v}{z-1} \, dz.
\end{equation}
We would like to reuse our work in section \ref{w=1} to find the asymptotics of the integral in \e{mc}. As well as having a saddle-point at $z=1$, the integrand  also has a simple pole there and so Theorem \ref{il} cannot be used. Perron in \cite{Pe17} covered the case we need and we quote a version of his result in Theorem 6.3 of \cite{OSper} next, (though it is slightly more general than required). Note that $R_p$  depends only on the holomorphic function $p(z)$ and $z_0$; it can be any positive number that is sufficiently small.

\begin{theorem}  \label{m6} {\rm (Perron's method for an integrand containing a factor $(z-z_0)^{a-1}$  for arbitrary $a \in \C$.)}
Suppose Assumptions \ref{ma0} hold, though with the following change to
 the  contour $\cc$. Starting at $z_1$ it runs to the point $z'_1$ which is a distance $R_p$ from $z_0$ and  on the bisecting line with angle $\theta_{k_1}$. Then the contour circles $z_0$ to arrive at the point $z'_2$ which is a distance $R_p$ from $z_0$ and  on the bisecting line with angle $\theta_{k_2}$. Finally, the contour ends at $z_2$. The integers $k_1$ and $k_2$ keep track of how $\cc$ rotates about $z_0$ between $z'_1$ and $z'_2$; the angle of rotation is  $2\pi(k_2-k_1)/\mu$.

Suppose  that
$
    \Re(p(z))<\Re(p(z_0))
$
for all $z$ in the segments of $\cc$ between $z_1$ and $z'_1$ and between $z'_2$ and $z_2$ (including endpoints).
Let $a \in \C$. For $z\in \cc$, the branch of $(z-z_0)^{a-1}$ is specified by requiring
\begin{equation} \label{thhxm6a}
    (z'_1-z_0)^{a-1} = |z'_1-z_0|^{a-1}\cdot e^{i\theta_{k_1}(a-1)}
\end{equation}
when $z=z'_1$ and by continuity at the other points of $\cc$.
Then for any $S \in \Z_{\gqs 0}$,
\begin{multline} \label{wimm5}
    \int_\cc e^{n \cdot p(z)} (z-z_0)^{a-1} q(z) \, dz \\
    = e^{n \cdot p(z_0)} \left(\sum_{s=0}^{S-1} \G\left(\frac{s+a}{\mu}\right) \frac{\alpha_s  \left( e^{2\pi i {k_2}(s+a)/\mu}- e^{2\pi i {k_1}(s+a)/\mu}\right)}{n^{(s+a)/\mu}} + O\left(\frac{K_q}{n^{(S+\Re(a))/\mu}} \right)  \right)
\end{multline}
where the implied constant in \eqref{wimm5} is independent of $n$ and $q$. The numbers $\alpha_s$
 are given by \eqref{hjw}, depending on $a$ now.
 If $(s+a)/\mu \in \Z_{\lqs 0}$ then
 \begin{equation*}
    \G((s+a)/\mu) \left( e^{2\pi i {k_2}(s+a)/\mu}- e^{2\pi i {k_1}(s+a)/\mu}\right)
 \end{equation*}
  in \eqref{wimm5} is not defined and  must be replaced by $2\pi i (k_2-k_1)(-1)^{(s+a)/\mu}/|(s+a)/\mu|!$.
  \end{theorem}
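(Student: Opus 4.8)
The plan is to localize the integral to a small disc about $z_0$, normalize the exponent by Perron's conformal substitution, and then evaluate the resulting monomial integrals over a keyhole (Hankel) contour, reading off the Gamma factors and phases directly. First I would discard the two segments of $\cc$ joining $z_1$ to $z_1'$ and $z_2'$ to $z_2$: on these (including the endpoints $z_1',z_2'$ at distance $R_p$ from $z_0$) the hypothesis gives $\Re(p(z))\lqs \Re(p(z_0))-\delta$ for some $\delta>0$, so the integrand is bounded by $K_q\,M\,e^{n\Re(p(z_0))}e^{-n\delta}$, where $M$ bounds $|(z-z_0)^{a-1}|$. These pieces are absorbed into the error term, and since the bound is proportional to $K_q$ this is precisely what delivers the uniformity in $q$. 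What remains is the portion of $\cc$ inside the disc of radius $R_p$, which winds from the ray $\theta_{k_1}$ to the ray $\theta_{k_2}$.

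On this disc I would introduce the Perron variable. From \eqref{f} one has $p(z_0)-p(z)=p_0(z-z_0)^\mu(1-\phi(z))$ with $\phi(z_0)=0$, so setting $\zeta=(z-z_0)(1-\phi(z))^{1/\mu}$ defines a biholomorphism near $z_0$ (this is what forces $R_p$ to be small) under which $p(z_0)-p(z)=p_0\zeta^\mu$ exactly. Writing $z-z_0=\zeta\,g(\zeta)$ with $g$ analytic and $g(0)=1$, the integrand becomes
\[
  e^{n\cdot p(z_0)}\,e^{-n p_0\zeta^\mu}\,\zeta^{a-1}\,h(\zeta)\,d\zeta,\qquad h(\zeta):=g(\zeta)^{a-1}\,q(z(\zeta))\,\tfrac{dz}{d\zeta},
\]
with $h$ analytic at $0$. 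Expanding $h(\zeta)=\sum_{s\gqs 0}c_s\zeta^s$, the numbers $c_s$ are exactly the ingredients of Proposition \ref{wojf}; the only effect of the branch factor is to shift the exponent in the binomial coefficient, so the $\alpha_s$ of \eqref{wimm5} are still given by \eqref{hjw}, now read with the general $a$. I would expand $h$ to order $S$ with an analytic remainder $O(\zeta^S)$ and integrate term by term, the remainder producing the stated $O\bigl(K_q\,n^{-(S+\Re(a))/\mu}\bigr)$ error.

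For a single monomial $c_s\zeta^{s+a-1}$ I would deform the winding arc (radius $R_p$, running from angle $\theta_{k_1}$ to $\theta_{k_2}$) to the keyhole consisting of the inward ray along $\theta_{k_1}$, a small circle about $0$, and the outward ray along $\theta_{k_2}$; this is legitimate because the integrand is analytic in the swept sector and the branch of $\zeta^{a-1}$ is propagated by continuity as prescribed in \eqref{thhxm6a}. Along the ray $\theta_\ell$ one has $p_0\zeta^\mu=|p_0|r^\mu\gqs 0$, since $\mu\theta_\ell=-\arg(p_0)+2\pi\ell$, so each ray integral reduces to $\int_0^{R_p}e^{-n|p_0|r^\mu}r^{s+a-1}\,dr$, which after $\tau=n|p_0|r^\mu$ tends to $\mu^{-1}(n|p_0|)^{-(s+a)/\mu}\G((s+a)/\mu)$ up to an exponentially small tail. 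The accompanying phases are $e^{i\theta_{k_j}(s+a)}=e^{-i\arg(p_0)(s+a)/\mu}e^{2\pi i k_j(s+a)/\mu}$, and combining $e^{-i\arg(p_0)(s+a)/\mu}|p_0|^{-(s+a)/\mu}=p_0^{-(s+a)/\mu}$ reproduces precisely the factor $\G((s+a)/\mu)\bigl(e^{2\pi i k_2(s+a)/\mu}-e^{2\pi i k_1(s+a)/\mu}\bigr)$ appearing in \eqref{wimm5}, while the small circle contributes nothing once $\Re((s+a)/\mu)>0$.

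The delicate point, and the one I expect to be the main obstacle, is the exceptional case $(s+a)/\mu\in\Z_{\lqs 0}$, say $(s+a)/\mu=-m$. Then $\zeta^{s+a-1}=\zeta^{-\mu m-1}$ is single-valued, the two phases $e^{2\pi i k_j(s+a)/\mu}$ both equal $1$, so the ray integrals cancel \emph{exactly} for every radius and the $\G$-times-difference expression degenerates to the indeterminate $\infty\cdot 0$. The entire contribution then comes from the small circle, which I would evaluate by Laurent expansion: only the residue term of $e^{-n p_0\zeta^\mu}\zeta^{-\mu m-1}$, namely $(-n p_0)^m/m!$, survives integration over the arc of angular width $2\pi(k_2-k_1)/\mu$, yielding $2\pi i(k_2-k_1)(-n p_0)^m/(\mu\,m!)$. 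Using $c_s=\mu\,\alpha_s\,p_0^{-(s+a)/\mu}=\mu\,\alpha_s\,p_0^{m}$, this collapses to $\alpha_s\,2\pi i(k_2-k_1)(-1)^m n^m/m!$, exactly the prescribed replacement $2\pi i(k_2-k_1)(-1)^{(s+a)/\mu}/|(s+a)/\mu|!$ inserted in place of $\G((s+a)/\mu)(\cdots)$. Beyond this residue bookkeeping, the care required is in tracking the branch of $\zeta^{a-1}$ consistently through the winding (so that $k_1,k_2$ enter only through the clean phases above) and in checking that every error estimate carries the factor $K_q$.
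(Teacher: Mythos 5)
Note first that the paper you were asked to match contains no proof of Theorem \ref{m6}: it is quoted verbatim as Theorem 6.3 of \cite{OSper}, whose proof goes back to Perron. Your proposal reconstructs essentially that classical argument --- truncation of the outer segments using the strict maximum hypothesis, the normalizing substitution $\zeta=(z-z_0)(1-\phi(z))^{1/\mu}$ which makes $p(z_0)-p(z)=p_0\zeta^\mu$ exact, Watson-type ray integrals producing $\G\left((s+a)/\mu\right)$ with the phases $e^{2\pi i k_j(s+a)/\mu}$ coming from $e^{i\theta_{k_j}(s+a)}=e^{-i\arg(p_0)(s+a)/\mu}e^{2\pi i k_j(s+a)/\mu}$, and a residue computation on the inner arc in the exceptional case --- so in structure it agrees with the quoted source, and the generic-case bookkeeping, the $K_q$-uniformity via Cauchy estimates, and the exceptional-case constant $2\pi i(k_2-k_1)(-1)^m n^m/m!$ all check out.

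Two points need repair, one of substance. Substantively, your evaluation sends the inner radius $\epsilon\to 0$ separately on each piece, which is legitimate only when $\Re\left((s+a)/\mu\right)>0$: you note this condition for the small circle, but the two ray integrals likewise diverge at the origin when it fails. Since the theorem asserts the result for \emph{arbitrary} $a\in\C$ (e.g.\ $a=-\tfrac12$ with $s=0$), you must either keep $\epsilon$ fixed and evaluate each ray-plus-arc combination as a Hankel loop, or observe that the keyhole integral and the exponentially small tails are analytic in $a$ and extend the identity from $\Re(a)$ large by analytic continuation; the points $(s+a)/\mu\in\Z_{\lqs 0}$ then appear as removable singularities whose values your residue computation supplies. (Your application target $a=0$ happens to be covered by your two cases, but the stated theorem is not.) Second, a sign slip: inverting \eqref{hjw} gives $c_s=\mu\,\alpha_s\,p_0^{(s+a)/\mu}=\mu\,\alpha_s\,p_0^{-m}$, not $\mu\,\alpha_s\,p_0^{-(s+a)/\mu}=\mu\,\alpha_s\,p_0^{m}$ as written; with your version a stray factor $p_0^{2m}$ survives, whereas the correct relation makes $c_s\cdot 2\pi i(k_2-k_1)(-np_0)^m/(\mu\,m!)$ collapse to the stated replacement. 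Two smaller glosses worth a line each in a full write-up: in the exceptional case the non-residue terms $\zeta^{\mu(j-m)-1}$, $j\neq m$, vanish on the arc \emph{exactly} (their single-valued primitives take equal values at the endpoints because $e^{i\mu\theta_{k_1}}=e^{i\mu\theta_{k_2}}$) --- needed since for $j<m$ they would otherwise blow up as $\epsilon\to0$; and after the substitution the images of $z'_1,z'_2$ are only approximately $R_pe^{i\theta_{k_1}},R_pe^{i\theta_{k_2}}$, so your keyhole deformation requires short connecting arcs near the steepest-descent rays, on which $\Re(p_0\zeta^\mu)\gqs c|\zeta|^\mu$ makes the discrepancy exponentially small.
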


  We may apply Theorem \ref{m6} to the integral in \e{mc} taking $z_1=1/2$, $z_0=1$ and $z_2=3/2$, since the remaining parts are exponentially small by the work in Lemma \ref{xm}. Then use \e{mu2},  $a=0$, $k_1=1$ and $k_2=0$, to obtain
\begin{equation} \label{mc2}
  \int_0^\infty e^{n p(z)} \frac{z^v}{z-1} \, dz = -\pi i +
  \sum_{s=1}^{S-1}  \G\left(\frac{s}{2}\right) \frac{\alpha_s \cdot (1- (-1)^{s})}{n^{s/2}} + O\left(\frac{1}{n^{S/2}} \right).
\end{equation}
Using \e{mc2} in \e{mc} and simplifying $\alpha_s$ in \e{hjw}  shows the next result.

\begin{prop} \label{xte2}
As  $n \to \infty$,
\begin{equation}\label{abei}
  \frac{e^n}{n^{n+v}} \G(n+v+1) \Psi_n(v) = \sqrt{2\pi n}\left(\tau_0(v)+\frac{\tau_1(v)}{n}+\frac{\tau_2(v)}{n^2}+ \cdots + \frac{\tau_{R-1}(v)}{n^{R-1}}+
  O\left( \frac{1}{n^R}\right)\right),
\end{equation}
for an implied constant depending only on $R\in \Z_{\gqs 1}$ and $v\in \Z$, with
\begin{equation}\label{tav}
  \tau_r(v) :=   \sum_{m=0}^{2r+1} (-1)^{m+1}\binom{v}{2r+1-m}\sum_{k=0}^{m}  \frac{(2r+2k-1)!!}{(-1)^k k!} \dm_{m,k}\left( \frac{1}{3}, \frac{1}{4}, \frac{1}{5}, \dots \right).
\end{equation}
\end{prop}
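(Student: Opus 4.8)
The plan is to feed the integral asymptotic \e{mc2} into the exact identity \e{mc} and then reduce the Perron coefficients $\alpha_{2r+1}$ to the stated closed form $\tau_r(v)$. First I would substitute \e{mc2} into
\[
  \frac{e^n}{n^{n+v}} \G(n+v+1) \Psi_n(v) =  -n\pi i - n\int_0^\infty e^{n  \cdot p(z)} \frac{z^v}{z-1} \, dz.
\]
The $-\pi i$ heading the expansion of the integral is multiplied by $-n$ and cancels exactly against $-n\pi i$; this is why all purely imaginary contributions vanish and the right-hand side is real to every order. What remains is $-n$ times the sum over $s\gqs 1$, in which the factor $1-(-1)^s$ kills every even $s$ and equals $2$ on the odd ones. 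Setting $s=2r+1$ and using $-n\cdot n^{-(2r+1)/2}=-\sqrt{n}\,n^{-r}$ collapses the expansion to
\[
  \frac{e^n}{n^{n+v}} \G(n+v+1) \Psi_n(v) = -2\sqrt{n}\sum_{r=0}^{R-1}\G\bigl(r+\tfrac12\bigr)\frac{\alpha_{2r+1}}{n^r}+O\bigl(n^{1/2-R}\bigr),
\]
where I would choose $S=2R+1$ so that the error $O(n^{1-S/2})$ is exactly $O(n^{1/2-R})=\sqrt{2\pi n}\cdot O(n^{-R})$, and so that the surviving odd indices $s=1,3,\dots,2R-1$ give precisely $r=0,\dots,R-1$.

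The main obstacle is the reduction of $\alpha_{2r+1}$, which is the heart of the argument. Here \e{mu2} supplies $\mu=2$, $p_0=1/2$, $q_s=\binom{v}{s}$, and the ratios entering \e{hjw} are $p_s/p_0=2(-1)^s/(s+2)$, so the De Moivre arguments are $(-2/3,1/2,-2/5,\dots)$ rather than the clean $(1/3,1/4,1/5,\dots)$. I would normalize these with the homogeneity relations \e{mulk} and \e{muln}: relation \e{muln} with $c=-1$ extracts a sign $(-1)^m$ and relation \e{mulk} with $c=2$ extracts a power $2^k$, giving
\[
  \dm_{m,k}\Bigl(\frac{p_1}{p_0},\frac{p_2}{p_0},\dots\Bigr)=(-1)^m 2^k\,\dm_{m,k}\Bigl(\frac13,\frac14,\frac15,\dots\Bigr).
\]
Together with $q_{2r+1-m}=\binom{v}{2r+1-m}$ and the prefactor $\tfrac12 p_0^{-(2r+1)/2}=2^{(2r-1)/2}$, this rewrites $\alpha_{2r+1}$ as a double sum having exactly the shape of \e{tav}, except that it carries the coefficient $2^k\binom{-(2r+1)/2}{k}$ in place of the double factorial.

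To finish I would invoke the identity \e{idp}, in the form $2^{r+k}\G(r+1/2)\binom{-r-1/2}{k}/\sqrt{\pi}=(2r+2k-1)!!/((-1)^k k!)$, which converts that coefficient into the double factorial of \e{tav} and simultaneously produces a factor $\G(r+1/2)^{-1}$ cancelling the $\G(r+1/2)$ standing outside. The leftover numerical constant is $2^{(2r-1)/2}\cdot 2^{-r}\cdot\sqrt{\pi}=\sqrt{\pi/2}$, and the remaining double sum equals $-\tau_r(v)$, since \e{tav} carries $(-1)^{m+1}$ against the $(-1)^m$ produced above. Hence $\alpha_{2r+1}=-\sqrt{\pi/2}\,\tau_r(v)/\G(r+1/2)$, and substituting back gives $-2\sqrt{n}\,\G(r+1/2)\,\alpha_{2r+1}=2\sqrt{\pi/2}\,\sqrt{n}\,\tau_r(v)=\sqrt{2\pi n}\,\tau_r(v)$, which is \e{abei}. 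The only delicate point is the bookkeeping of the powers of $2$ and $\sqrt\pi$; this is identical to the constant-tracking in the proof of Proposition \ref{xpg}, so I would present it as a direct parallel and leave the routine substitutions to the reader.
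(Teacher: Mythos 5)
Your proposal is correct and follows the same route as the paper: substitute \eqref{mc2} into \eqref{mc}, keep only the odd $s=2r+1$ terms, and simplify $\alpha_{2r+1}$ from \eqref{hjw} (with $a=0$, $\mu=2$, $p_0=1/2$) via \eqref{mulk}, \eqref{muln} and the identity \eqref{idp}; your constant-tracking ($2^{(2r-1)/2}\cdot 2^{-r}\cdot\sqrt{\pi}=\sqrt{\pi/2}$, hence $-2\G(r+1/2)\alpha_{2r+1}=\sqrt{2\pi}\,\tau_r(v)$) and the choice $S=2R+1$ are exactly the details the paper leaves to the reader.
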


\begin{proof}[Proof of Theorem \ref{xte}]
Combining Propositions \ref{xpg} and \ref{xte2} produces
\begin{equation*}
  \Psi_n(v) = \left(\sum_{r=0}^{R-1}\frac{\tau_r(v)}{n^r}+
  O\left( \frac{1}{n^R}\right)\right)\Big/\left(1+\sum_{r=1}^{R-1}\frac{\g_r(v)}{n^r}+
  O\left( \frac{1}{n^R}\right)\right).
\end{equation*}
Then \e{trw} follows and $\psi_r(v)$ may be expressed in terms of the $\tau_r(v)$ and $\g_r(v)$ coefficients. Using \cite[Prop. 3.2]{odm}, for example, to find the multiplicative inverse of the series involving $\g_r(v)$ yields
\begin{equation}\label{xpo}
  \psi_r(v) = \sum_{m=0}^r \tau_{r-m}(v) \sum_{k=0}^m (-1)^k \dm_{m,k}\left(\g_1(v), \g_2(v), \dots \right).
\end{equation}
\end{proof}

A computation now finds for example, with $v$ any fixed integer as $n \to \infty$,
\begin{multline} \label{pex}
  \Psi_n(v)  = -\frac{1}3-v+ \left(\frac{4}{135}+\frac{v(v+1)^2}{3}\right)\frac 1{n}\\
   +
   \left(\frac{8}{2835}-\frac{v(9v^4+45v^3+75v^2+47v+8)}{135}\right)\frac{1}{n^2}+O\left( \frac{1}{n^3}\right).
\end{multline}
The expansion  \e{hrb} of $\theta_n(v)$  looks similar to \e{pex} and, in particular, their constant terms $\psi_r=\psi_r(0)$ and $\rho_r$ seem to agree up to an alternating sign.

\begin{conj} \label{ky}
For all $r \gqs 0$ we have $\psi_r = (-1)^{r+1}\rho_r$.
\end{conj}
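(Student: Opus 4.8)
The conjecture is equivalent to the single identity of formal power series
\[
  \sum_{r\gqs 0}\psi_r\,n^{-r}=-\sum_{r\gqs 0}\rho_r\,(-n)^{-r},
\]
that is, $\Psi(n)=-\theta(-n)$, where $\theta(n)$ and $\Psi(n)$ denote the asymptotic series of $\theta_n$ and $\Psi_n$: indeed $(-1)^{r+1}\rho_r$ is exactly the coefficient of $n^{-r}$ in $-\theta(-n)$. The plan is to prove this reflection identity directly, which would also explain the ``unexplained connection'': the $\ei(n)$ expansion is the continuation $n\mapsto-n$ of the $e^n$ expansion.

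Both series come from the same phase $p(z)=1-z+\log z$ at the saddle $z_0=1$ (Proposition \ref{pil} and \eqref{mc}--\eqref{mc2}). First I would pass to the saddle coordinate $u$ defined by $u^2=(z-1)-\log z$, with the branch $Z(u)=z-1\sim\sqrt2\,u$ solving $Z-\log(1+Z)=u^2$. Writing $g:=Z'$, the analytic function $h:=Z'/Z-1/u$ (analytic since $Z/u\to\sqrt2$), and the formal Gaussian integral $\langle f\rangle:=\int_{-\infty}^{\infty}e^{-nu^2}f(u)\,du$, Lemma \ref{xm}, the pole extraction in Theorem \ref{m6} and \eqref{mc} give the three representations
\[
  \tfrac{e^n}{n^n}\G(n+1)=n\langle g\rangle,\qquad
  \theta_n=1-\tfrac n2\,Q(n),\qquad
  \tfrac{e^n}{n^n}\G(n+1)\,\Psi_n=-n\langle h\rangle,
\]
where $Q(n):=\langle \operatorname{sgn}(u)\,g\rangle$. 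Hence $\Psi_n=-\langle h\rangle/\langle g\rangle$, and since $\theta(-n)=1+\tfrac n2\,Q(-n)$, the target identity collapses to $\langle h\rangle=\langle g\rangle\,\theta(-n)$.

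The essential new ingredient is an algebraic link between the two amplitudes. Differentiating $Z-\log(1+Z)=u^2$ gives $Z'=2u(1+Z)/Z$, and eliminating $Z$ (using $g(0)^2=2$) yields the clean relation
\[
  h=\frac{g^{2}-2}{2u}-g.
\]
I would insert this into $\langle h\rangle=\langle g\rangle\,\theta(-n)$ and realise the reflection $n\mapsto-n$ as the rotation $u\mapsto iv$ of the integration contour (analytic continuation in $n$), which turns $\theta(-n)$ into a Gaussian integral of the odd part of $g$ taken along the imaginary axis and reduces the whole statement to bookkeeping with Gaussian moments.

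The hard part is exactly this last step, and it is where the content of the conjecture resides. On one hand the quadratic term $g^2$ produces, under $\langle\cdot\rangle$, a self-convolution that must be shown to reproduce the product $\langle g\rangle\cdot\theta(-n)$; on the other hand the rotation $u\mapsto iv$ is a genuine Stokes/turning-point passage carrying the real-axis ($e^n$) regime to the imaginary-axis ($\ei$) regime, and one must track rigorously both the simple pole at $u=0$ and the branch of $Z(u)$ along the rotated contour. I expect that controlling this rotation-and-convolution mechanism is the whole difficulty.
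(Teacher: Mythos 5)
This statement is Conjecture \ref{ky}; the paper offers no proof of it, only numerical confirmation for $r \lqs 100$ and a promise to pursue it in followup work, so there is no argument of the author's to measure yours against. The only question is whether your proposal actually closes the gap, and it does not. Your preliminary reductions are correct and genuinely useful: the reformulation of the conjecture as the formal identity $\Psi(n)=-\theta(-n)$; the three representations $\tfrac{e^n}{n^n}\G(n+1)=n\langle g\rangle$, $\theta_n=1-\tfrac n2\langle \sgn(u)\,g\rangle$ and $\tfrac{e^n}{n^n}\G(n+1)\Psi_n=-n\langle h\rangle$ (the $-\pi i$ extracted from the pole at $u=0$ does cancel against the $-n\pi i$ in \e{mc}); and the algebraic identity $h=(g^2-2)/(2u)-g$, which follows from $Z'=2u(1+Z)/Z$ and is consistent at $u=0$ with $Z=\sqrt2\,u+\tfrac23 u^2+\cdots$, reproducing $\psi_0=-h(0)/g(0)=-1/3$. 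But at the end of all this the conjecture has merely been transported to the unproven identity $\langle h\rangle=\langle g\rangle\,\theta(-n)$; nothing beyond what was already known has been established.

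The two devices you propose for that last identity are not bookkeeping, and each hides a real obstruction. First, $\theta(-n)$ has meaning here only as a formal sign-change in the coefficients of a divergent asymptotic series; to realize it as the rotated integral $u\mapsto iv$ of $\sgn(u)\,g$ you would be continuing an asymptotic expansion, not a function, across a Stokes line, and you give no mechanism for controlling the exponentially large terms and the choice of branch of $Z(u)$ that appear in that passage. Second, $\langle (g^2-2)/(2u)\rangle$ is a Gaussian average of a square, while $\langle g\rangle\cdot\theta(-n)$ is a product of two separate averages; a convolution of Taylor coefficients of $g$ has no a priori reason to factor this way termwise, and some further structural input (a differential or functional equation for $Z$ under $u\mapsto -u$ or $u\mapsto iu$) would be needed to force it. You flag this final step yourself as ``the whole difficulty,'' and that self-assessment is accurate: what you have is a promising reduction and a plausible explanation of the sign pattern, but the conjecture remains unproven.
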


We have confirmed this relation for $r \lqs 100$ and hope to pursue it in a followup work.

{\small \bibliography{ram-bib} }

\begin{thebibliography}{FGKP95}

\bibitem[Ber89]{Ber89}
Bruce~C. Berndt.
\newblock {\em Ramanujan's notebooks. {P}art {II}}.
\newblock Springer-Verlag, New York, 1989.

\bibitem[BM11]{BM11}
Stella Brassesco and Miguel~A. M\'{e}ndez.
\newblock The asymptotic expansion for {$n!$} and the {L}agrange inversion
  formula.
\newblock {\em Ramanujan J.}, 24(2):219--234, 2011.

\bibitem[Buc63]{Bu63}
J.~D. Buckholtz.
\newblock Concerning an approximation of {C}opson.
\newblock {\em Proc. Amer. Math. Soc.}, 14:564--568, 1963.

\bibitem[Car65]{Ca65}
L.~Carlitz.
\newblock The coefficients in an asymptotic expansion.
\newblock {\em Proc. Amer. Math. Soc.}, 16:248--252, 1965.

\bibitem[Com74]{Comtet}
Louis Comtet.
\newblock {\em Advanced combinatorics}.
\newblock D. Reidel Publishing Co., Dordrecht, enlarged edition, 1974.
\newblock The art of finite and infinite expansions.

\bibitem[FGKP95]{Fl95}
Philippe Flajolet, Peter~J. Grabner, Peter Kirschenhofer, and Helmut Prodinger.
\newblock On {R}amanujan's {$Q$}-function.
\newblock {\em J. Comput. Appl. Math.}, 58(1):103--116, 1995.

\bibitem[GKP94]{Knu}
Ronald~L. Graham, Donald~E. Knuth, and Oren Patashnik.
\newblock {\em Concrete mathematics}.
\newblock Addison-Wesley Publishing Company, Reading, MA, second edition, 1994.
\newblock A foundation for computer science.

\bibitem[GS78]{GS78}
Ira Gessel and Richard~P. Stanley.
\newblock Stirling polynomials.
\newblock {\em J. Combinatorial Theory Ser. A}, 24(1):24--33, 1978.

\bibitem[JCHK95]{knu95}
David~J. Jeffrey, Robert~M. Corless, David E.~G. Hare, and Donald~E. Knuth.
\newblock Sur l'inversion de {$y^\alpha e^y$} au moyen des nombres de
  {S}tirling associ\'{e}s.
\newblock {\em C. R. Acad. Sci. Paris S\'{e}r. I Math.}, 320(12):1449--1452,
  1995.

\bibitem[Knu97]{Knuprog}
Donald~E. Knuth.
\newblock {\em The art of computer programming. {V}ol. 1}.
\newblock Addison-Wesley, Reading, MA, 1997.
\newblock Fundamental algorithms, Third edition.

\bibitem[Mar86]{Mar86}
John C.~W. Marsaglia.
\newblock The incomplete gamma function and {R}amanujan's rational
  approximation to $e^x$.
\newblock {\em J. Stat. Comput. Simul.}, 24:163--168, 1986.

\bibitem[Olv97]{Olv}
Frank W.~J. Olver.
\newblock {\em Asymptotics and special functions}.
\newblock AKP Classics. A K Peters, Ltd., Wellesley, MA, 1997.
\newblock Reprint of the 1974 original.

\bibitem[O'S]{odm}
Cormac O'Sullivan.
\newblock {D}e {M}oivre and {B}ell polynomials.
\newblock arXiv:2203.02868.

\bibitem[O'S19]{OSper}
Cormac O'Sullivan.
\newblock Revisiting the saddle-point method of {P}erron.
\newblock {\em Pacific J. Math.}, 298(1):157--199, 2019.

\bibitem[Per17]{Pe17}
Oskar Perron.
\newblock \"{U}ber die n\"{a}herungsweise {B}erechnung von {F}unktionen
  gro{\ss}er {Z}ahlen.
\newblock {\em Sitzungsber. Bayr. Akad. Wissensch. (M\"unch. Ber.)}, pages
  191--219, 1917.

\bibitem[SS92]{So92}
K.~Soni and R.~P. Soni.
\newblock An approximation connected with the exponential function.
\newblock {\em Proc. Amer. Math. Soc.}, 114(4):909--918, 1992.

\bibitem[Sze24]{Sz24}
G{\'a}bor Szeg{\H o}.
\newblock {\"U}ber eine {E}igenschaft der {E}xponentialreihe.
\newblock {\em Sitzungsber Berliner Math. Gesellschaft}, 23:50--64, 1924.

\bibitem[Vol08]{Vo08}
Hans Volkmer.
\newblock Factorial series connected with the {L}ambert function, and a problem
  posed by {R}amanujan.
\newblock {\em Ramanujan J.}, 16(3):235--245, 2008.

\end{thebibliography}

{\small 
\vskip 5mm
\noindent
\textsc{Dept. of Math, The CUNY Graduate Center, 365 Fifth Avenue, New York, NY 10016-4309, U.S.A.}

\noindent
{\em E-mail address:} \texttt{cosullivan@gc.cuny.edu}
}

\end{document}